\documentclass[12pt]{amsart}
\usepackage[top=1in,bottom=1in, left=1 in, right= 1 in, includehead,includefoot]{geometry}
\usepackage[T1]{fontenc}

\usepackage{color,hyperref}

\usepackage{amsthm}
\usepackage{amsmath,amssymb}

\usepackage{mathtools} 
\usepackage[numbers,sort&compress]{natbib}
\usepackage{enumerate}

\newtheorem{theorem}{Theorem}[section]

\newtheorem{lemma}[theorem]{Lemma}
\newtheorem{proposition}[theorem]{Proposition}

\theoremstyle{definition}

\theoremstyle{remark}
\newtheorem{remark}[theorem]{Remark}
\theoremstyle{remark}

\numberwithin{equation}{section}

\newcommand{\ind}{{\bf 1}}

\def\inddd#1{{\ind}_{\left\{#1\right\}}} 
\newcommand{\proba}{\mathbb P}
\newcommand{\esp}{{\mathbb E}}

\newcommand{\inv}{^{-1}}
\newcommand{\cov}{{\rm{Cov}}}
\newcommand{\var}{{\rm{Var}}}

\newcommand{\eqnh}{\begin{eqnarray*}}
	\newcommand{\eqne}{\end{eqnarray*}}
\newcommand{\eqnhn}{\begin{eqnarray}}
	\newcommand{\eqnen}{\end{eqnarray}}
\newcommand{\equh}{\begin{equation}}
	\newcommand{\eque}{\end{equation}}

\def\summ#1#2#3{\sum_{#1 = #2}^{#3}}
\def\prodd#1#2#3{\prod_{#1 = #2}^{#3}}
\def\sif#1#2{\sum_{#1=#2}^\infty}

\newcommand{\eqd}{\stackrel{d}{=}}

\def\topp#1{^{(#1)}}

\def\nn#1{{\left\|#1\right\|}}

\def\abs#1{\left|#1\right|}
\def\sabs#1{|#1|}

\def\ccbb#1{\left\{#1\right\}} 

\def\pp#1{\left(#1\right)}
\def\spp#1{(#1)}
\def\bb#1{\left[#1\right]}

\def\mmid{\;\middle\vert\;}

\def\floor#1{\left\lfloor #1 \right\rfloor}
\def\sfloor#1{\lfloor #1 \rfloor}
\def\ceil#1{\left\lceil #1 \right\rceil}

\def\aa#1{\left\langle #1\right\rangle}

\def\qmand{\quad\mbox{ and }\quad}

\def\qmwith{\quad\mbox{ with }\quad}
\def\mfa{\mbox{ for all }}

\def\mmas{\mbox{ as }}

\def\wt#1{\widetilde{#1}}
\def\wb#1{\overline{#1}}
\def\what#1{\widehat{#1}}

\def\limn{\lim_{n\to\infty}}

\def\limsupn{\limsup_{n\to\infty}}
\def\liminfn{\liminf_{n\to\infty}}
\def\weakto{\Rightarrow}

\def\R{{\mathbb R}}

\def\N{{\mathbb N}} 
\def\B{{\mathbb B}}

\newcommand{\calC}{{\mathcal C}}
\newcommand{\calF}{{\mathcal F}}
\newcommand{\calG}{{\mathcal G}}

\newcommand{\calP}{{\mathcal P}}

\newcommand{\calN}{\mathcal{N}}
\newcommand{\calM}{\mathcal{M}}
\newcommand{\calZ}{\mathcal{Z}}

\renewcommand{\d}{{\rm d}}
\newcommand{\aswto}{\stackrel{a.s.w.}\to}

\newcommand{\ddelta}[1]{\delta_{\pp{#1}}}

\date{}
\def\Var{\mathop{\rm Var}}
\def\Cov{\mathop{\rm Cov}}

\def\R{\mathbb{R}}

\def\N{\mathbb{N}}
\def\P{\mathbb{P}}
\def\E{\mathbb{E}}
\def\1{\ind}

\newcommand{\PPP}{{\rm PPP}}
\title[A phase transition in random partitions]{Second-order fluctuations for a phase transition in random partitions}

\author{Jaime Garza}
\address{Department of Mathematics and Statistics\\University of Ottawa\\50 Louis Pasteur Private, Ottawa\\Ontario, K1N 6N5, Canada.}\email{jgarza@uottawa.ca}

\author{Yizao Wang}
\address{Department of Mathematical Sciences\\University of Cincinnati\\2815 Commons Way\\Cincinnati, OH, 45221-0025, USA.}\email{yizao.wang@uc.edu}

\begin{document}\sloppy
\begin{abstract}

In a recent paper, \citet{banderier24phase} investigated the limiting behavior of component counts of random partitions induced by the Chinese restaurant process with parameters $\alpha\in(0,1)$ and $\theta>-\alpha$. Let $C_j(n)$ denote the number of components of size $j$ of a partition of $\{1,\ldots,n\}$ and consider $j=j_n\to\infty$ as $n\to\infty$.
They identified a phase transition in the first-order limit behavior of $C_{j_n}(n)$, where the critical regime corresponds to $j_n\sim rn^{\alpha/(1+\alpha)}$ for some $r>0$. A natural next question is to understand the corresponding second-order fluctuations.

We establish second-order limit theorems in the critical regime and, under an additional rate condition in the subcritical regime ($j_n\ll n^{\alpha/(1+\alpha)}/(\log\log n)^{1/(1+\alpha)}$), for the counting process $(C_{j_n}(n(1+t/j_n)_+))_{t\in\mathbb R}$. In the subcritical regime, after appropriate normalization, the limit is a stationary Ornstein--Uhlenbeck Gaussian process, whereas in the critical regime the limit is a stationary $M/M/\infty$ queue. We also establish a more refined point-process convergence in the critical regime. We first establish these results for the more general Karlin infinite urn model and then adapt the analysis to the Chinese restaurant process. For the latter model, most of our limit theorems are established in the quenched sense.
\end{abstract}
\maketitle
%\tableofcontents

\section{Introduction and main results}

Exchangeable random partitions induced by the Chinese restaurant process play a fundamental role in combinatorial stochastic processes \citep{arratia03logarithmic,pitman06combinatorial}. This is a sequence of random partitions $(\Pi_n)_{n\in\N}$, each of $[n] = \{1,\dots,n\}$,  indexed by $(\alpha,\theta)$ with $\alpha\in[0,1)$ and $\theta>-\alpha$ (see Section \ref{sec:CRP} for details). We shall refer to each $\Pi_n$ as an $(\alpha,\theta)$-partition  in the sequel.  In the case $\alpha=0$ and $\theta>0$, $\Pi_n$ is known to be distributed according to the Ewens sampling formula \citep{ewens72sampling,crane16ubiquitous}, and in the case $(\alpha,\theta) = (0,1)$ it is the partition induced by a uniform random permutation. Exchangeable random partitions are intrinsically related to exchangeable random variables in probability theory. They have found applications in population genetics \citep{ewens72sampling,kingman82coalescent} and nonparametric Bayesian inference \citep{ferguson73bayesian,feng10poisson,ghosal17fundamentals}. More recently, the induced random permutations have also attracted attention from the random matrix community \citep{benarous15fluctuations,wieand00eigenvalue,bahier22smooth,garza24limit}, and these permutations have also served as the building blocks of regular random graphs \citep{johnson14cycles,dumitriu13functional,ganguly20random}.

A central object of study is the number of components of a given size $j$ of the partition  (sometimes referred to as~the $j$-cycle counts for the corresponding random {\em permutations}). Throughout, given a random partition of $[n]$, we let $C_j(n)$ denote the number of all components of size $j$. The law of large numbers and the central limit theorem have both been established for $C_j(n)$ for $(\alpha,\theta)$-partitions. For $(0,\theta)$-partitions and $(\alpha,\theta)$-partitions with $\alpha\in(0,1)$, the asymptotic behaviors are drastically different, as are the proof methods. Most early developments concerned the case $\alpha=0$, where the so-called Feller coupling has proven to be very powerful \citep{arratia03logarithmic}. 

In this paper we focus on the case 
\[
\alpha\in(0,1).
\]
Let 
\[%\equh\label{eq:p_j}
p\topp\alpha_j:=\frac \alpha{\Gamma(1-\alpha)}\frac{\Gamma(j-\alpha)}{\Gamma(j+1)}, \quad j\in\N,
\]%\eque
denote the probability mass function of the $\alpha$-Sibuya distribution. It is well known that
\equh\label{eq:P_j1}
\limn \frac{C_j(n)}{n^\alpha} = p_j\topp\alpha S_\alpha, \quad \mfa j\in\N,
\eque
almost surely, where $S_\alpha$ is the so-called $\alpha$-diversity of the Chinese restaurant process. 

Recently,
 \citet{banderier24phase} initiated the study of the same statistic but in the regime $j_n\to\infty$ (i.e., $C_{j_n}(n)$ as $n\to\infty$), and revealed the following phase transition at the level of the first-order law of large numbers. We write $a_n\ll b_n$ if $a_n = o(b_n)$ (as $n\to\infty$) and $a_n\sim b_n$ if $\limn a_n/b_n = 1$. We let $\weakto$ denote convergence in distribution.
\begin{enumerate}[(i)]
\item (subcritical regime) If $j_n\to\infty, j_n\ll n^{\alpha/(1+\alpha)}$, 
then
\equh\label{eq:BKW1}
 \frac{C_{j_n}(n)}{n^\alpha p_{j_n}\topp\alpha} \weakto S_\alpha,
\eque
as $n\to\infty$. The convergence of all moments also holds.
\item (Critical regime) If $j_n\sim r n^{\alpha/(1+\alpha)}$ for some $r\in(0,\infty)$, then
\equh\label{eq:BKW2}
C_{j_n}(n)\weakto{\rm Poi}(c_\alpha(r)S_\alpha) \qmwith c_\alpha(r) := \frac\alpha{\Gamma(1-\alpha)}r^{-\alpha-1},
\eque
as $n\to\infty$, where the limit is understood as a mixed Poisson random variable with random parameter $c_\alpha(r)S_\alpha$. The convergence of all moments also holds.
\item (Supercritical regime) If $j_n\gg n^{\alpha/(1+\alpha)}$, then $C_{j_n}(n)\to 0$ in probability as $n\to\infty$. 
\end{enumerate}

The above first-order limit theorems extend the earlier result \eqref{eq:P_j1}, where
$j$ is fixed. Once this first-order phase transition has been identified, the
next natural question is to understand the corresponding second-order fluctuations.
For fixed $j$, related second-order limit theorems are available. For comparison, Theorem \ref{thm:j fixed} gives a functional
decomposition of the fluctuations into two non-negligible components. The remaining case is therefore
the regime $j=j_n\to\infty$.

The main contribution of this paper is to establish second-order limit theorems
for the subcritical and critical regimes, where the limiting processes turn out to be of different types.  In the
subcritical regime, the limit is a stationary
Ornstein--Uhlenbeck Gaussian process under the time scale $n(1+t/j_n)$. In the
critical regime, the limit is instead a stationary $M/M/\infty$ queue, and a
more detailed point-process convergence characterizing the formation and disappearance of components of size $j_n$ is also obtained.

Before stating our results we first introduce some notation.
Throughout, we consider the sequence of random partitions $(\Pi_n)_{n\in\N}$ induced by a Chinese restaurant process, and hence $(C_j(n))_{n,j}$ are defined on a common probability space. Moreover, letting 
\[
K_n:=\summ j1n C_j(n)= |\Pi_n|
\]
denote the total number of components of $\Pi_n$, it is well known that
\[
\limn \frac{K_n}{n^\alpha} = S_\alpha,
\]
almost surely. Let $\Pi_{n,j}$ denote the $j$-th component of $\Pi_n$ in the order of appearance (the number of customers at the $j$-th table after $n$ customers have entered the restaurant), and $|\Pi_{n,j}|$ its size. It is also well known that
\[
\limn \frac{|\Pi_{n,j}|}n = P_j, \mbox{ almost surely,}
\]
where $(P_j)_{j\in\N}$ is referred to as the asymptotic frequencies of the components. This family of random variables follows the Griffiths--Engen--McCloskey (GEM) 
distribution with parameters $(\alpha,\theta)$, and the random sequence ordered in decreasing values $(P_j^\downarrow)_{j\in\N}$ follows the Poisson--Dirichlet distribution. We set $\calP :=\sigma((P_j)_{j\in\N})$. It is well known that $S_\alpha$ is $\calP$-measurable.

We first state the two counting-process limit theorems for the Chinese restaurant process, which refine \eqref{eq:BKW1} and \eqref{eq:BKW2}, respectively. In particular, our results are {\em quenched} limit theorems in the sense of almost sure weak convergence \citep{grubel16functional}. We say a sequence of random elements $(X_n)_{n\in\N}$ converges almost surely weakly to $X$ with respect to a $\sigma$-algebra $\calF$, denoted by `$X_n\aswto X$ with respect to $\calF$' as $n\to\infty$, if for all continuous and bounded functions $f$ we have $\limn \esp(f(X_n)\mid\calF) = \esp (f(X)\mid\calF)$ almost surely. Implicitly, $(X_n)_{n\in\N}, X, \calF$ are on the same probability space. Write
\[
s_n(t):=n\pp{1+\frac t{j_n}}_+.
\]
This is the correct time scaling under which a non-trivial dependence structure emerges in the limit of $C_{j_n}(\floor{s_n(t)})$ (in contrast to the linear scaling $nt$ for limit theorems for $C_j(\floor{nt})$).

\begin{theorem}[subcritical regime]\label{thm:CRP}
 If $j_n/(\log n)^2\to\infty$ and 
 \equh\label{eq:j_n upper}
 j_n = o\pp{\frac{n^{\alpha/(1+\alpha)}}{(\log\log n)^{1/(1+\alpha)}}},
 \eque
  then
\equh\label{eq:CRP limit}
\frac{j_n^{(\alpha+1)/2}}{n^{\alpha/2}}\pp{C_{j_n}(\floor{s_n(t)})-s_n(t)^\alpha p_{j_n}\topp\alpha S_\alpha}_{t\in\R}\aswto \pp{\frac{\alpha}{\Gamma(1-\alpha)}S_\alpha}^{1/2}\pp{\zeta_t}_{t\in\R}
\eque
in $D(\R)$ equipped with the local $J_1$ topology with respect to $\calP$, where $(\zeta_t)_{t\in\R}$ is a stationary Ornstein--Uhlenbeck Gaussian process, independent of $\calP$. Namely, $(\zeta_t)_{t\in\R}$  is a centered Gaussian process with covariance function
\[
\cov(\zeta(s),\zeta(t)) = e^{-|s-t|}, s,t\in\R.
\]
If $j_n\to\infty$ and \eqref{eq:j_n upper} hold but $j_n/(\log n)^2\not\to\infty$, then the convergence in \eqref{eq:CRP limit} still holds in the sense of weak convergence. 
\end{theorem}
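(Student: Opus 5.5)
Conditionally on $\calP$, the Chinese restaurant process is a Karlin infinite urn: customers are i.i.d.\ draws from the random frequencies $(P_k)_{k\in\N}$. The plan is therefore to prove a quenched functional CLT for the Karlin model with deterministic frequencies $(p_k)$ satisfying regular variation of index $\alpha$, and then plug in $(P_k)$. We Poissonize time: let $N$ be a rate-one Poisson process, so that the urn counts $N_k(s)$ at time $s$ are independent Poisson processes with rates $p_k$. The Poissonized count is
\[
\wt C_{j}(s)=\sum_k \inddd{N_k(s)=j}.
\]
This is a sum of independent processes, one per box. Box $k$ contributes a $\{0,1\}$-valued process that is on exactly while $N_k\in\{j\}$.

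In the window $s=n(1+t/j_n)$, only boxes with $p_k\approx j_n/n$ matter. The count of such boxes is governed by the counting function $\nu(x)=\#\{k:p_k\ge x\}\sim S_\alpha x^{-\alpha}/\Gamma(1-\alpha)$, which holds a.s.\ under $\calP$.

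Step 1 (covariance). We compute the mean and covariance of $\wt C_{j_n}(s_n(\cdot))$. Each box's indicator has mean $e^{-ps}(ps)^j/j!$. By the local CLT, the event $\{N(s)=j\}$ at time $s_n(t)$ requires $p\,n=j_n+O(\sqrt{j_n})$. A box in state $j$ at $s_n(s)$ is still in state $j$ at $s_n(t)$, for $t>s$, with probability $\exp(-p\,n(t-s)/j_n)\approx e^{-(t-s)}$. Summing against $\nu$ gives the covariance
\[
\cov\approx n^\alpha j_n^{-\alpha-1}\,\frac{\alpha S_\alpha}{\Gamma(1-\alpha)}\,e^{-|t-s|},
\]
which matches the normalization in \eqref{eq:CRP limit}. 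The approximation $n^\alpha p_{j_n}\topp\alpha\sim n^\alpha\alpha j_n^{-1-\alpha}/\Gamma(1-\alpha)$ identifies the variance. The mean equals $s^\alpha p_{j_n}\topp\alpha S_\alpha$ up to an error. We must show this error is $o(n^{\alpha/2}j_n^{-(\alpha+1)/2})$. This requires a quantitative rate in $\nu(x)\sim S_\alpha x^{-\alpha}/\Gamma(1-\alpha)$ a.s. For the CRP, $K_n/n^\alpha-S_\alpha$ fluctuates at order $n^{-\alpha/2}$ times a law-of-the-iterated-logarithm factor. This is where \eqref{eq:j_n upper} with its $\log\log n$ enters: the mean error is of relative size $(n/j_n)^{-\alpha/2}\sqrt{\log\log n}$. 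It must be beaten by the fluctuation scale $(n^\alpha j_n^{-1-\alpha})^{-1/2}$, and this is exactly $j_n^{1+\alpha}\log\log n=o(n^\alpha)$.

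Step 2 (finite-dimensional distributions and tightness). Finite-dimensional convergence follows from the Lindeberg CLT, since we have independent bounded summands and diverging variance. For tightness in $D$, each box's indicator process jumps at most twice in the window. We use a Chentsov-type moment bound on increments of a sum of independent indicator processes. Its fourth moment of increments over $[s,t]$ is bounded by $C(t-s)^2$ plus lower-order terms, giving tightness in $J_1$ on compacts.

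Step 3 (de-Poissonization). We pass from $N(s)$ to the fixed-$n$ counts by monotone sandwiching. The event of being in state $j$ is not monotone, so we instead control $\sup_{|u-s|\le C\sqrt{s\log s}}|\wt C_{j_n}(u)-\wt C_{j_n}(s)|$. The time shift $\sqrt n$ corresponds to $t$-shifts of size $j_n/\sqrt n\to0$. By the tightness modulus, this difference is negligible.

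The quenched statement requires the conditional convergence to hold for a.e.\ $\calP$ simultaneously along all $n$. For this, the Step 1 estimates must hold on a single full-measure event. In addition, the Lindeberg/tightness error bounds must be summable, or the proof must pass to a.s.\ statements via Borel--Cantelli along subsequences. The condition $j_n/(\log n)^2\to\infty$ makes the de-Poissonization error, which involves $\log n$ factors, vanish a.s.\ rather than only in probability. Without it we only get convergence in probability of the error, hence annealed weak convergence.

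I expect the main obstacle to be the centering in Step 1. We need an almost sure rate for $\nu(x)$ for the GEM frequencies, precisely at scale $x\approx j_n/n$ and uniformly over a window, with only a $\log\log$ loss. I would derive it from the martingale LIL for $K_n$ in the CRP, combined with the Karlin relation $\esp(K_n\mid\calP)\approx\Gamma(1-\alpha)\nu(1/n)$.
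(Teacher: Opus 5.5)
\paragraph{The gap: the centering step.} Most of your architecture matches the paper: the quenched reduction to a Karlin urn, Poissonization, the covariance $e^{-|t-s|}$, Lindeberg for the finite-dimensional laws, and de-Poissonization by a random time change. A moment-based tightness argument instead of the paper's martingale/compensator split is a legitimate variant. The gap is in the step you flag as the main obstacle. After Poissonizing, the $\calP$-measurable centering error is
\[
\Delta_n(t):=\esp\bigl(\wt C_{j_n}(s_n(t))\mid\calP\bigr)-s_n(t)^\alpha p_{j_n}\topp\alpha S_\alpha
=\frac1{\Gamma(j_n+1)}\int_0^\infty e^{-z}(j_n-z)z^{j_n-1}\,\wb N\pp{D(s_n(t)/z)^\alpha}\d z,
\]
where $\nu(x)=N(Dx^\alpha)$ (paper's convention, $\theta=0$) and $\wb N(x)=N(x)-x$. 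It must be $o(\sigma_{n,j_n})$, with $\sigma_{n,j_n}^2\asymp j_n^{-1}(n/j_n)^\alpha$.

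The kernel has integral zero and $L^1$-mass of order $j_n^{-1/2}$. So inserting any pointwise almost-sure rate $|\wb N(Dx^\alpha)|\lesssim x^{\alpha/2}\sqrt{\log\log x}$ at $x\approx n/j_n$ gives only
\[
|\Delta_n(t)|\lesssim j_n^{-1/2}(n/j_n)^{\alpha/2}\sqrt{\log\log n}\asymp\sigma_{n,j_n}\sqrt{\log\log n}.
\]
This is not $o(\sigma_{n,j_n})$ for any growth of $j_n$; even a CLT-size bound would give only $O(\sigma_{n,j_n})$.

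Your ``relative size $(n/j_n)^{-\alpha/2}\sqrt{\log\log n}$'' is the relative error of $\nu$, not of the mean. It drops the factor $\sqrt{j_n}$ lost to this cancellation. Even as written, comparing it with $1/\sigma_{n,j_n}$ yields $j_n\gg\log\log n$, not $j_n^{1+\alpha}\log\log n=o(n^\alpha)$. The correct relative error is $\sqrt{j_n}(n/j_n)^{-\alpha/2}\sqrt{\log\log n}$, and its vanishing is exactly what \eqref{eq:j_n upper} buys. In the paper, however, that is used only for the first-order asymptotics $\esp(\wt C_{j_n}(nt)\mid\calP)\sim t^\alpha\sigma_{n,j_n}^2$, which feed the covariance, bracket and tightness bounds (condition \eqref{eq:j_n rate} of Theorem \ref{thm:1}, verified via \eqref{eq:assump nu}).

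Separately, $\esp(K_n\mid\calP)\approx\Gamma(1-\alpha)\nu(n)$ holds only to first order: the error is itself of order $n^{\alpha/2}$. So an LIL for $K_n$ cannot be converted into a rate for $\nu$ at that precision. The paper instead uses the exact representation of Lemma \ref{lem:N(D)} together with absolute continuity in $\theta$.

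\paragraph{The missing idea: use the cancellation.} Let $W_n=(j_n-G_{j_n})/\sqrt{j_n}$ with $G_{j_n}$ an independent Gamma$(j_n)$ variable, and let $\esp_N$ integrate only over $G_{j_n}$. Since $\esp W_n=0$,
\[
\Delta_n(t)/r_n=\esp_N\bigl(W_n(V_n(R_n(t))-V_n(1))\bigr),
\]
where $r_n=j_n^{-1/2}(n/j_n)^{\alpha/2}$, $V_n(s)=\wb N(D(n/j_n)^\alpha s^\alpha)/(n/j_n)^{\alpha/2}$ and $R_n(t)-1=O(j_n^{-1/2})$. Thus only increments of $\wb N$ over windows of relative width $j_n^{-1/2}$ enter.

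In probability, the conclusion follows from the functional CLT for $V_n$, which must account for $D$ being a function of $N$, plus a tail truncation in $W_n$.

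Almost surely, the paper writes $\Delta_n(t)=H_n(D(1+t/j_n)^\alpha)$ with $H_n(d)=\int F_{n,d}(y)(N(\d y)-\d y)$. Here $\int F_{n,d}^2\,\d y=O(r_n^2j_n^{-1/2})$ and $\sup F_{n,d}=O(j_n^{-1/2})$. Bernstein's inequality then gives $\proba(|H_n(d)|>\epsilon r_n)\le 2e^{-c_\epsilon\sqrt{j_n}}$. The paper takes a union bound over a grid of $O(n^4)$ values of $d$, adds a Lipschitz estimate, and applies Borel--Cantelli. Summability of $n^4e^{-c\sqrt{j_n}}$ is exactly where $j_n/(\log n)^2\to\infty$ is used.

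So your attribution of that condition to de-Poissonization is also off. De-Poissonization holds almost surely, conditionally on $\calP$, from the LIL for the arrival times of the total Poisson process, with no lower growth condition on $j_n$. Without $j_n\gg(\log n)^2$, what is lost is only the almost-sure control of $\Delta_n$. One then has $\Delta_n=o(\sigma_{n,j_n})$ merely in probability, which is why the second part of the theorem is annealed.
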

\begin{theorem}[Critical regime]\label{thm:crit} If $j_n = \floor{rn^{\alpha/(1+\alpha)}}$ for $r>0$, then
\[
\pp{C_{j_n}(\floor{s_n(t)})}_{t\in\R}\aswto (\calC_{\alpha,r}(t))_{t\in\R}
\]
in $D(\R)$ equipped with the local $J_1$ topology, with respect to $\calP$, where, conditionally on $\calP$, $(\calC_{\alpha,r}(t))_{t\in\R}$ is a stationary immigration--death process with immigration rate $c_\alpha(r)S_\alpha$ and unit per-particle death rate.
\end{theorem}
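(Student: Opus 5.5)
Conditionally on $\calP$, I plan to represent the Chinese restaurant process as a Karlin infinite urn model with random frequencies $(P_j)$, sampled at Poisson times. Concretely, let $N$ be a unit-rate Poisson process independent of everything. Given $\calP$, assign each arrival independently to box $k$ with probability $P_k$. This is legitimate by Kingman's paintbox representation: the $(\alpha,\theta)$-partition of $[n]$ is equal in law, conditionally on $\calP$, to the partition induced by i.i.d.\ samples from $(P_k)$. Because we need the joint law in $n$, I will work with the sequential (paintbox) construction, whose partition sequence coincides in law with $(\Pi_n)_n$ given $\calP$.

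After Poissonization, box $k$ receives arrivals according to independent Poisson processes with rates $P_k$. Write $\wt C_j(s)$ for the number of boxes with exactly $j$ balls at time $s$. Then $\wt C_{j_n}(s_n(t))=\sum_k \inddd{N_k(s_n(t))=j_n}$ is a sum of independent indicator processes. The plan has three steps.

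\emph{Step 1: Poisson limit of the point process.} Take the marks $x_k:=nP_k/j_n$. Box $k$ contributes only when $nP_k\approx j_n$. A box with rate $\lambda=P_k$ has $N_k(n(1+t/j_n))=j_n$ during a time window of length of order $j_n/(nP_k)\cdot(1/j_n)\cdot n$ on the $t$-scale. More precisely, I will check the following:
\begin{itemize}
\item Given $nP_k=j_n(1+u/j_n)$, the process $t\mapsto \ind\{N_k(s_n(t))=j_n\}$ converges to the indicator of an interval $[\tau,\tau+E)$.
\item Here $E$ is an exponential random variable with mean $1$: the holding time of the $j_n$-th state, rescaled by $nP_k/j_n\to1$.
\item The location $\tau$ is the entrance time, which varies with $u$.
\end{itemize}
Hence the entrance times over all boxes form approximately a Poisson process. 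Its intensity per unit $t$ is $\sum_k \P(\text{box }k\text{ enters state }j_n\text{ during }[t,t+dt]\mid\calP)\approx \sum_k nP_k e^{-s P_k}(sP_k)^{j_n-1}/(j_n-1)!\,dt/j_n$.

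I then need the almost sure regular variation of the empirical counting function $\nu(x)=\#\{k:P_k\ge x\}\sim S_\alpha x^{-\alpha}/\Gamma(1-\alpha)$ as $x\downarrow0$ (a standard fact for GEM$(\alpha,\theta)$). Using it, this sum equals $\int n x\, e^{-nx}(nx)^{j_n-1}/(j_n-1)!\,\nu(dx)/j_n\approx \alpha S_\alpha(j_n/n)^{-\alpha-1}n^{-1}\cdot\frac{1}{\Gamma(1-\alpha)}\cdot(j_n/ n)\,n/j_n$. With $j_n= rn^{\alpha/(1+\alpha)}$ this reduces to $c_\alpha(r)S_\alpha$, independent of $t$, since $s_n(t)/n\to1$. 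So, conditionally on $\calP$, the marked point process of (entrance time, lifetime) converges to a Poisson process on $\R\times(0,\infty)$ with intensity $c_\alpha(r)S_\alpha\,dt\otimes e^{-y}dy$.

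The functional consequence is immediate: $\sum\ind\{\tau_i\le t<\tau_i+E_i\}$ is exactly the stationary $M/M/\infty$ queue. I would prove the Poisson convergence via Kallenberg's criterion (convergence of mean measures and of void probabilities), or via the Laplace functional of independent sums. This works because each box has a vanishing probability of ever hitting $j_n$ within a compact $t$-window, which gives uniform asymptotic negligibility.

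\emph{Step 2: from point process to $D(\R)$.} Convergence of the point process on compact windows $[-T,T]$ gives $J_1$ convergence of the counting process. The one thing to rule out is boxes visiting state $j_n$ more than once; this cannot happen, since counts are monotone. I also need to show that lifetimes are asymptotically exponential uniformly over the relevant $u$.

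\emph{Step 3: de-Poissonize.} I need to show that $C_{j_n}(\floor{s_n(t)})$ and $\wt C_{j_n}(s_n(t))$ agree with high conditional probability. On the $t$-scale, $N(s)-s=O(\sqrt n)$ corresponds to a time shift of order $j_n/\sqrt n\to0$, since $j_n\ll n^{1/2}$. Thus $N^{-1}(\floor{s_n(t)})=s_n(t+o(1))$ uniformly on compacts, and the time change is negligible in $J_1$. The statement is almost sure weak convergence with respect to $\calP$, so Steps 1–3 must hold for $\calP$-a.e.\ realization. This makes the a.s.\ asymptotics of $\nu$ the input used.

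\emph{Main obstacle.} I expect the hardest part to be the quenched intensity computation. We need $\sum_k$ over boxes with $nP_k\approx j_n$, i.e.\ $P_k$ in a window of relative width $j_n^{-1/2}$ around $j_n/n$. The a.s.\ regular variation of $\nu$ only gives ratio asymptotics on fixed relative scales, not on shrinking windows. I would try to handle this by smoothing: the Poisson kernel $e^{-nx}(nx)^{j_n-1}/(j_n-1)!$, integrated against $\nu(dx)$ by parts, only requires $\nu$ on intervals $[x(1-\epsilon),x(1+\epsilon)]$ up to errors. I would combine that with a second-moment or Borel–Cantelli bound for the fluctuations of $\nu$ on scale $j_n^{-1/2}$, using the strong law for $K_n$ refined along a geometric subsequence.
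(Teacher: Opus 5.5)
Your label-free architecture (Poissonize, apply Kallenberg's criterion to the marked process of entrance times and lifetimes, map to the counting path, de-Poissonize at the path level) is a legitimate alternative to the paper's route through Theorem~\ref{thm:PP conv}. It has a real advantage: since you never ask \emph{which} urns are active, you avoid the recentering $\wt\ell_n=2\ell_n-\Gamma_{\ell_n}$ and the sharp expansion of Lemma~\ref{lem:CRP case}(i). The price is that you obtain only the counting process, not the labelled point process.

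However, the step you defer as the ``main obstacle'' is the heart of the proof, and the mechanism you suggest for it would not work. Modulo Chernoff bounds for boxes with $nP_k/j_n$ away from $1$, the mean-measure part of Step~1 reduces to showing that, almost surely, $\esp(\wt C_{j_n}(s_n(t))\mid\calP)\to c_\alpha(r)S_\alpha$ uniformly for $t$ in compacts. Smoothing over fixed relative windows is exactly the paper's lemma on $I_{n,j_n}$, which requires $j_n^{1/2}\rho_{[1-\epsilon,1+\epsilon]}(n/j_n)\to0$. For $(\alpha,\theta)$-partitions in the critical regime this quantity is of order $\sqrt{\log\log n}$ along a subsequence (by the LIL); this is precisely why Theorem~\ref{thm:CRP} carries the $\log\log n$ restriction.

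What works instead is the cancellation in \eqref{eq:WV simplified}. The integrated-by-parts kernel $(j_n-z)e^{-z}z^{j_n-1}$ has mean zero, so the global discrepancy between the number of atoms above level $j_n/n$ and $D(n/j_n)^\alpha$ (of order $\sqrt{j_n\log\log n}$) drops out. What remains are compensated counts of atoms in windows of relative width of order $j_n^{-1/2}$, each containing of order $\sqrt{j_n}$ atoms. These must be $o(\sqrt{j_n})$ simultaneously over polynomially many windows and all large $n$, almost surely. Regular variation of $\nu$ and the strong law for $K_n$ carry no information at this resolution. A geometric subsequence cannot be interpolated either: between $n=2^m$ and $n=2^{m+1}$ the level $j_n/n$ moves by a constant factor, far more than the window width. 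Second-moment bounds give per-window failure probabilities of order $j_n^{-1/2}$, which are not summable over a polynomial grid.

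The missing ingredient is the distributional structure of Lemmas~\ref{lem:P_j} and~\ref{lem:N(D)}. For $\theta=0$ the points $\Gamma_\ell=D(P_\ell^\downarrow)^{-\alpha}$ are the arrival times of a standard Poisson process $N$, so $\esp(\wt C_{j_n}(s_n(t))\mid\calP)=s_n(t)^\alpha p_{j_n}\topp\alpha S_\alpha+H_n(D(1+t/j_n)^\alpha)$, with $H_n(d)$ a compensated Poisson integral. Apply the Bernstein inequality on a grid of mesh $n^{-4}$, combine it with the Lipschitz bound, and take it uniformly over the plugged-in value $d\in[a,b]$ (necessary because $D$ is itself a function of $N$). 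This gives $\sup_{d\in[a,b]}\sabs{H_n(d)}\to0$ almost surely. It is verbatim the almost-sure part of the proof of Proposition~\ref{prop:0 process}, which applies here because $r_n=j_n^{-1/2}(n/j_n)^{\alpha/2}$ stays bounded and $j_n/(\log n)^2\to\infty$. Alternatively, use the Cs\"org\H{o}--R\'ev\'esz increment bound \eqref{eq:LIL}, as the paper does. Since $s_n(t)^\alpha p_{j_n}\topp\alpha S_\alpha\to c_\alpha(r)S_\alpha$, this yields the quenched intensity, and absolute continuity extends it to $\theta\ne0$.

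A second, smaller omission is in Step~2. On $[-T,T]$ the functional $\xi\mapsto\sum_i\ind\{y_i\le t<y_i+z_i\}$ depends on points with entrance times arbitrarily far in the past. So convergence of the point process on compacts does not by itself give $J_1$ convergence of the counting process. You need a quenched truncation estimate in the spirit of \eqref{eq:B} in Lemma~\ref{lem:A}: intervals of lifetime exceeding $K$ that meet $[-T,T]$ occur with conditional probability $O(e^{-K/4})$ uniformly in large $n$. You also need the corresponding (easy) bound for the limit.
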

The limiting process $(\calC_{\alpha,r}(t))_{t\in\R}$ is also known as an $M/M/\infty$ queue. Its off-diagonal transition rates are
\[
q_{i,i+1}=c_\alpha(r)S_\alpha, i\geq0,\qmand
q_{i,i-1}=i, i\geq1,
\]
and all other off-diagonal transition rates are zero.
Its stationary distribution (i.e., the marginal distribution of $\calC_{\alpha,r}(t)$) is a mixed Poisson distribution with random parameter $c_\alpha(r)S_\alpha$. 
In fact, we shall establish a more detailed point-process convergence in Theorem \ref{thm:PP conv}, describing the asymptotic behavior of times of formation and disappearance of each component of size $j_n$; the above convergence is then established following essentially a continuous-mapping argument (see Theorem \ref{thm:discrete-cmt}).

Moreover, we shall establish the aforementioned limit theorems for a large family of random partitions. 
We exploit the well-known connection between Chinese restaurant processes and infinite urn schemes through Kingman’s representation theorem. As a first step, we establish limit theorems in both regimes for the urn counts of the infinite urn model, or equivalently, for the component counts of the induced paintbox partitions.
Here, the corresponding urn model is with sampling frequencies decaying at a polynomial rate 
\equh\label{eq:P_j down}
P_j^\downarrow\sim D^{1/\alpha}j^{-1/\alpha} \mbox{ as $j\to\infty$ almost surely}
\eque
 with $D = S_\alpha/\Gamma(1-\alpha)$. This infinite urn scheme is often referred to as the Karlin model, following the seminal work of \citet{karlin67central} on urn models with polynomially decaying sampling frequencies. Since then, refined analyses of $C_j(n)$ have been obtained for fixed $j$ \citep{chebunin16functional,barbour09small,gnedin07notes,garza25functional,durieu16infinite}.

There is a delicate issue in applying the methodology developed for $C_j(n)$ to $C_{j_n}(n)$. For the Karlin model, assumption \eqref{eq:P_j down} is enough to establish most limit theorems concerning $C_j(n)$ with $j$ fixed. Now that $j = j_n\to\infty$ as $n\to\infty$, some additional care is needed, as the fluctuations of $P_j^\downarrow$ around $D^{1/\alpha}j^{-1/\alpha}$ may have a non-negligible impact on the limit.  More specifically, for sampling frequencies $(p_j)_{j\in\N}$ of the Karlin model if $p_j$ {\em is exactly} $D^{1/\alpha}j^{-1/\alpha}$, or sufficiently close to it in an appropriate sense, one should expect the same limit behavior. In this case we say the fluctuations of $p_j$ do not affect the limit. 

It turns out that the fluctuations of $P_j^\downarrow$ (as asymptotic frequencies of $(\alpha,\theta)$-partitions) are not negligible. In the subcritical regime, this essentially leads to the constraint $j_n\ll n^{\alpha/(1+\alpha)}/(\log\log n)^{1/(1+\alpha)}$ (compared to $j_n\ll n^{\alpha/(1+\alpha)}$ in \eqref{eq:BKW1}), and it is not clear to us whether this additional constraint can be removed. In this regime, Theorem \ref{thm:CRP} follows as a corollary of Theorem \ref{thm:1} for the Karlin model. In the critical regime, we shall show that the fluctuations of $P_j^\downarrow$ around $D^{1/\alpha}j^{-1/\alpha}$ are strong enough that at the level of point-process convergence, a different centering is necessary. That is, in the critical regime, Theorem \ref{thm:crit} is not a simple corollary of a general result on the Karlin model, but has to be dealt with as a special case taking into account the fluctuation of $P_j^\downarrow$; compare Theorems \ref{thm:PP conv} and \ref{thm:PP conv general}.

Another delicate point in Theorem \ref{thm:CRP} is that when $j_n\to\infty$ but $j_n/(\log n)^2\not\to\infty$, we only establish that \eqref{eq:CRP limit} holds in the annealed sense. This is better explained in the next result. 
\begin{proposition}\label{prop:0 process}
Suppose that
\[
j_n\to\infty
\qquad\mbox{and}\qquad
j_n=o\pp{
\frac{n^{\alpha/(1+\alpha)}}
{(\log\log n)^{1/(1+\alpha)}}
}.
\]
Then, for every $T>0$,
\begin{equation}\label{eq:centering probability}
\limn\frac{j_n^{(\alpha+1)/2}}{n^{\alpha/2}}\sup_{t\in[-T,T]}
\abs{
{
\esp\pp{C_{j_n}(\floor{s_n(t)})\mid\calP}
-s_n(t)^\alpha p_{j_n}\topp\alpha S_\alpha
}}
=0
\qquad\mbox{in probability}.
\end{equation}
If, in addition,
\begin{equation}\label{eq:lower growth combined}
\frac{j_n}{(\log n)^2}\to\infty,
\end{equation}
then the convergence in \eqref{eq:centering probability} holds almost
surely.
\end{proposition}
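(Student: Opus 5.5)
The plan is to work conditionally on $\calP$ via Kingman's paintbox representation. Given $\calP$, the customers are allocated independently to the tables with frequencies $(P_k)_{k\in\N}$. Writing
\[
f_{m,j}(x):=\binom mj x^j(1-x)^{m-j}
\qmand
N(x):=\#\{k: P_k>x\},
\]
we have
\[
\esp\pp{C_j(m)\mid\calP}=\sum_k f_{m,j}(P_k)=-\int_0^1 f_{m,j}(x)\,\d N(x).
\]
By \eqref{eq:P_j down}, $N(x)\sim Dx^{-\alpha}$ as $x\downarrow0$, with $D=S_\alpha/\Gamma(1-\alpha)$. I decompose $N=N_0+R$ with $N_0(x):=Dx^{-\alpha}$ and treat the two pieces separately, uniformly in $m=\floor{s_n(t)}$, $t\in[-T,T]$.

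\textbf{Deterministic part.} A Beta integral gives
\[
\int_0^1 f_{m,j}(x)\,D\alpha x^{-\alpha-1}\,\d x
= S_\alpha\,p\topp\alpha_j\,\frac{\Gamma(m+1)}{\Gamma(m+1-\alpha)}
= S_\alpha\,p\topp\alpha_j\,m^\alpha\pp{1+O(1/m)}.
\]
Replacing $m=\floor{s_n(t)}$ by $s_n(t)$ costs another relative $O(1/n)$. Since $n^\alpha p\topp\alpha_{j_n}\asymp n^\alpha j_n^{-1-\alpha}$, after multiplying by $j_n^{(1+\alpha)/2}n^{-\alpha/2}$ the total error is $O(n^{\alpha/2-1}j_n^{-(1+\alpha)/2})\to0$, uniformly in $t$ and almost surely.

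\textbf{Fluctuation part.} I integrate by parts to get $\int_0^1 R(x)f_{m,j}'(x)\,\d x$; the boundary terms vanish since $f_{m,j}(0)=f_{m,j}(1)=0$ for $1\le j<m$. The kernel $f_{m,j}$ is unimodal, essentially concentrated on a window $I_m$ around $x_0=j/m$ of width $\sqrt j/m$ (up to a $\sqrt{\log n}$ factor), and has maximum $\asymp j^{-1/2}$, so $\int|f'_{m,j}|\asymp j^{-1/2}$.

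A crude bound $|R(x)|\lesssim x^{-\alpha/2}\sqrt{\log\log(1/x)}$ would only give $O(\sqrt{\log\log n})$ after normalization, which is not enough. The key step is therefore to use $\int f'_{m,j}=0$ and write the term as
\[
\int_0^1 \pp{R(x)-R(x_0)}f'_{m,j}(x)\,\d x .
\]
Outside $I_m$, Chernoff bounds on $f_{m,j}$ together with the crude growth bound on $R$ (this is where the law-of-the-iterated-logarithm size of $R$ and the upper restriction on $j_n$ enter) show the contribution is negligible. Inside $I_m$, the expected number of $P_k$ in the window is $\asymp x_0^{-\alpha-1}\sqrt j/m$, so the increment $R(x)-R(x_0)$ should be of order $n^{\alpha/2}j^{-\alpha/2-1/4}$, times $\sqrt{\log n}$ if we need it uniformly over all windows. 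Multiplying by $j^{-1/2}$ and by the normalization $j^{(1+\alpha)/2}n^{-\alpha/2}$ gives $O(j_n^{-1/4}\sqrt{\log n})$. This tends to $0$ almost surely exactly when $j_n/(\log n)^2\to\infty$, which is \eqref{eq:lower growth combined}. For the in-probability statement, a second-moment bound for a fixed window gives $O_\P(j_n^{-1/4})$. Uniformity over $t\in[-T,T]$ costs little: all windows for such $t$ lie in a common interval of width $O(\sqrt{j}/n)$ around $j/n$, since $s_n(t)$ varies only by a factor $1+O(1/j_n)$. So I take a supremum of increments over a fixed grid of subintervals of that interval.

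\textbf{Main obstacle.} The hardest step is the increment (local modulus) estimate for $R$. I would first treat $\theta=0$, where $(P_k^\downarrow)=(J_k/\tau)$ with $J_k$ the jumps on $[0,1]$ of an $\alpha$-stable subordinator, $\tau=\sum_k J_k$, and the jumps form a Poisson process with intensity $c\,y^{-\alpha-1}\d y$. Then $N(x)=\Pi((x\tau,\infty))$, and $S_\alpha$ is a function of $\tau$. The dependence through the random scale $\tau$ I would handle by controlling Poisson increments over a deterministic grid of intervals at scale $y\approx x\tau$, simultaneously for all $\tau$ in compact sets, using Poisson concentration and a union bound over $O(\mathrm{poly}(n))$ intervals (the source of the $\sqrt{\log n}$), followed by Borel--Cantelli along $n$. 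The same Poisson bounds give the $x^{-\alpha/2}\sqrt{\log\log(1/x)}$ growth of $R$ needed for the tails. General $\theta>-\alpha$ then follows by the absolute continuity of $\mathrm{PD}(\alpha,\theta)$ with respect to $\mathrm{PD}(\alpha,0)$, with density proportional to $S_\alpha^{\theta/\alpha}$. This change of measure preserves both almost-sure and in-probability statements.
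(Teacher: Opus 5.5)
Your deterministic part is fine. The Beta integral is exact for the binomial kernel, which also spares you the de-Poissonization step where the paper invokes \eqref{eq:EC uniform}. Your almost-sure argument is also sound: it is the paper's argument in integrated-by-parts form. Once the scale is frozen, $\int_0^1 (R(x)-R(x_0))f'_{m,j}(x)\,\d x=-\int_0^1 f_{m,j}\,\d R$ is a compensated Poisson integral. The paper applies Bernstein's inequality to it directly, using $\nn{f}_\infty\lesssim j_n^{-1/2}$ and variance $\lesssim r_n^2 j_n^{-1/2}$ with $r_n=j_n^{-1/2}(n/j_n)^{\alpha/2}$. It does this uniformly over a grid of deterministic scales $d\in[a,b]$, which is your device for the random scale $\tau$. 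If you keep your formulation, note one point. A single uniform increment bound over a window enlarged by $\sqrt{\log n}$ only gives $j_n^{-1/4}(\log n)^{3/4}$. To reach the $(\log n)^2$ threshold, the increment bound must grow like $\sqrt h$ in the distance $h$ from $x_0$ and be integrated against the Gaussian profile of $f'_{m,j}$.

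The gap is in the in-probability statement. Almost-sure convergence already gives it when $j_n\gg(\log n)^2$, so it only has content when $j_n/(\log n)^2\not\to\infty$, and that is exactly where your tools break.

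First, ``a second-moment bound for a fixed window'' is not available. In Poisson coordinates the window sits at $x_0\tau$, and $\tau$ is a functional of the same point process. So $R(x)-R(x_0)$ is not a compensated Poisson increment over a deterministic interval. The only decoupling you propose is a union bound over $\mathrm{poly}(n)$ deterministic intervals; it costs $\sqrt{\log n}$ and yields $O_\proba(j_n^{-1/4}\sqrt{\log n})$, which need not vanish here.

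Second, your tail control uses LIL-size bounds on $R$. Where $mx$ differs from $j_n$ by a constant factor, the $f'_{m,j}$-mass is only $e^{-cj_n}$. After normalization this leaves $e^{-cj_n}\sqrt{\log\log n}$, which does not vanish if $j_n$ grows slower than $\log\log\log n$.

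Closing this requires two things.
\begin{itemize}
\item In the tails, replace the LIL bounds by $L^1$ bounds from Doob's maximal inequality, valid simultaneously for all scales in a compact set. The paper does exactly this on $\{D\le x\}$.
\item In the core, you need uniformity in the scale that costs only $\mathrm{polylog}(j_n)$, not $\sqrt{\log n}$. One way: at resolution $2^k\sqrt{j_n}/m$ the scales $\tau\in[a,b]$ produce only $O(\sqrt{j_n}\,2^{-k})$ distinct windows, so a union bound with exponential tails gives $O_\proba(j_n^{-1/4}\sqrt{\log j_n})$. The other way is an asymptotic-independence statement between the scale and the Poisson points at level $j_n/n$.
\end{itemize}
The paper takes the second route. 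It writes $\Delta_n(t)/r_n=\esp_N(W_n(V_n(R_n(t))-V_n(1)))$; here $R_n$ is the paper's time change with $R_n(t)\to1$, unrelated to your $R$. It then uses the joint functional limit \eqref{eq:Vn fclt} from \citet[Lemma 3.4]{wang26central}, which already accounts for the dependence between $D$ and $N$. The core term therefore vanishes by continuity of the limit, with no rate and no logarithmic loss.
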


In fact, applying Theorem \ref{thm:1} to the Karlin model, we do not obtain directly Theorem \ref{thm:CRP}, but only
\[
\pp{\frac{j_n^{(\alpha+1)/2}}{n^{\alpha/2}}\pp{C_{j_n}(\floor{s_n(t)})-\esp\pp{C_{j_n}(\floor{s_n(t)})\mmid\calP}}}_{t\in\R}
 \aswto \pp{\frac\alpha{\Gamma(1-\alpha)}S_\alpha}^{1/2}\pp{\zeta(t)}_{t\in\R}
\]
with respect to $\calP$ as $n\to\infty$ (the difference is in the centering term). So, together with Proposition \ref{prop:0 process}, the above yields Theorem \ref{thm:CRP}.

There is also a delicate difference between fixed $j$ and increasing $j_n$ in the subcritical regime. 
With $j_n = j$ fixed, one has  the following functional central limit theorem concerning $C_j(n)$.
\begin{theorem}\label{thm:j fixed}
Under the notations above,
\begin{multline}\label{eq:j decomp}
\pp{\frac{C_{j}(\floor{nt})-\esp\pp{C_j(\floor{nt})\mmid\calP}}{n^{\alpha/2}}, \frac{\esp\pp{C_j(\floor{nt})\mmid\calP} - \floor{nt}^\alpha p_j\topp\alpha S_\alpha}{n^{\alpha/2}}}_{t\in[0,1],j\in\N}\\
\weakto\pp{\pp{\frac {S_\alpha}{\Gamma(1-\alpha)}}^{1/2}\zeta_{\alpha,j}\topp1(t),
\pp{\frac {S_\alpha}{\Gamma(1-\alpha)}}^{1/2}\zeta_{\alpha,j}\topp2(t)}_{t\in[0,1],j\in\N},
\end{multline}
in $(D[0,1]\times D[0,1])^\N$ with
\begin{align*}
\zeta\topp1_{\alpha,j}(t)&:=
\int_{\R_+\times\Omega'}\pp{\inddd{N'(tr) = j} - \proba(N'(tr) = j)}\d M_\alpha(\d r\d\omega'),\\
\zeta\topp2_{\alpha,j}(t)&:=\int_{\R_+\times\Omega'}\proba(N'(tr) = j)\d M_\alpha(\d r\d\omega'),
\end{align*}
where $M_\alpha$ is a Gaussian random measure on $(0,\infty)\times\Omega'$ with control measure $\alpha r^{-\alpha-1}\d r\d\proba'$ and $N'$ on $(\Omega',\proba')$ is a standard Poisson process, and $M_\alpha$ is independent of $S_\alpha$.
\end{theorem}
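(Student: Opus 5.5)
Via Kingman's representation, conditionally on $\calP$ the partition $\Pi_n$ has the law of a paintbox partition with frequencies $(P_k)_{k\in\N}$. Since $\sum_k P_k=1$ almost surely for $\alpha\in(0,1)$, there is no dust. I will Poissonize: replace $\floor{nt}$ by a Poisson process of intensity $n$, so that conditionally on $\calP$ the urn counts $N_k(nt)$ are independent Poisson processes with rates $nP_k$. Write $\wt C_j(nt)=\sum_k \inddd{N_k(nt)=j}$ for the Poissonized counts. A standard de-Poissonization step shows that the differences $C_j(\floor{nt})-\wt C_j(nt)$ and $\esp(C_j(\floor{nt})\mid\calP)-\esp(\wt C_j(nt)\mid\calP)$ are $o_\proba(n^{\alpha/2})$ uniformly in $t\in[0,1]$. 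For this I use $\abs{C_j(m)-C_j(m')}\le 2\abs{m-m'}$ together with a monotone sandwich on the counting times, the Poisson time-change error being of order $n^{1/2}\ll n^{\alpha/2}$ only after a finer argument. The naive bound fails because $1/2>\alpha/2$. I would therefore bound the expected number of urns whose count changes in a window of length $n^{1/2}$ near level $j$, which is $O(n^{1/2}\cdot n^{\alpha-1})=o(n^{\alpha/2})$. This reduces everything to the Poissonized model.

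For the first coordinate, I apply a functional CLT for sums of conditionally independent processes $\xi_k(t)=\inddd{N_k(nt)=j}-\proba(N_k(nt)=j\mid\calP)$, jointly in finitely many $j$. For finite-dimensional distributions, the Lindeberg condition is trivial since the summands are bounded. The conditional covariance is
\[
\sum_k \cov\pp{\inddd{N_k(ns)=j},\inddd{N_k(nt)=i}\mid\calP}=\int F(r)\,\nu_n(\d r),
\]
where $\nu_n=\sum_k\delta_{nP_k}$ and $F$ is an explicit bounded function. By \eqref{eq:P_j down}, $n^{-\alpha}\nu_n$ converges vaguely to $D\alpha r^{-\alpha-1}\d r$ almost surely, with $D=S_\alpha/\Gamma(1-\alpha)$, and $F$ decays fast enough at $0$ and $\infty$ to pass to the limit. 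This produces exactly the covariance of $(S_\alpha/\Gamma(1-\alpha))^{1/2}\zeta\topp1_{\alpha,j}$. Tightness in $D[0,1]$ follows from a Chentsov-type moment bound on increments $\esp(\abs{X(t)-X(s)}^2\abs{X(u)-X(t)}^2\mid\calP)\lesssim (u-s)^{2}$ plus small-scale control, as in the fixed-$j$ Karlin literature \citep{durieu16infinite,garza25functional}. This limit holds almost surely given $\calP$, so it is independent of $\calP$.

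For the second coordinate, I write $\esp(\wt C_j(nt)\mid\calP)=\sum_k e^{-ntP_k}(ntP_k)^j/j!$. The term $\floor{nt}^\alpha p_j\topp\alpha S_\alpha$ equals the same functional integrated against the deterministic measure $n^\alpha D\alpha r^{-\alpha-1}\d r$. The difference is therefore $\int \proba(N'(tr)=j)\,(\nu_n-n^\alpha D\alpha r^{-\alpha-1}\d r)(\d r)$, a smooth linear functional of the fluctuation of the empirical measure of the GEM frequencies. The key input is a CLT for the counting function $\#\{k:P_k>x\}-Dx^{-\alpha}$ at scale $x^{-\alpha/2}$, jointly as a process in $x$. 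This is available for $(\alpha,\theta)$-partitions through the stable subordinator representation: the ranked frequencies are normalized jumps of an $\alpha$-stable subordinator, tilted by the $\theta$-dependent density in the $\alpha$-diversity. The limit is Gaussian white noise with control $\alpha r^{-\alpha-1}\d r$, scaled by $S_\alpha^{1/2}$. Integrating by parts gives $\zeta\topp2_{\alpha,j}$, driven by the same $M_\alpha$ as $\zeta\topp1$. Independence of the two Gaussian components comes from the first being a conditional CLT given $\calP$. Joint convergence follows by conditioning: $\esp\pp{f(X_n\topp1)g(X_n\topp2)}=\esp\pp{g(X_n\topp2)\esp(f(X_n\topp1)\mid\calP)}$.

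The main obstacle is the second coordinate. I must establish the fluctuation CLT of $\nu_n$ for GEM$(\alpha,\theta)$ frequencies in a form uniform enough, including the behaviour of small $P_k$ (large $r$) and large $P_k$, to integrate against $\proba(N'(tr)=j)$ and obtain tightness in $t$. I would first handle $\theta=0$ via the exact Poisson point process of subordinator jumps conditioned on total mass, then transfer to general $\theta$ by absolute continuity in $S_\alpha$.
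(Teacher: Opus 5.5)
Your proposal is correct and follows essentially the paper's route: Poissonize, obtain the first coordinate as a quenched fixed-$j$ Karlin functional CLT given $\calP$, write the second coordinate as the kernel $\proba(N'(tr)=j)$ integrated against the fluctuation of the counting function $N(D\,\cdot)$ and pass to the limit via \eqref{eq:Vishakha} plus integration by parts, de-Poissonize, combine the two coordinates by conditioning on $\calP$, and transfer from $\theta=0$ to general $\theta$ via Lemma~\ref{lem:P_j}(ii). One point should be made explicit: your conditioning identity needs the second coordinate to converge jointly with $D$, with a limiting Brownian motion independent of $D$, and the same joint convergence is what lets the truncated change of measure to general $\theta$ go through (the paper imports this from \citet{wang26central} as \eqref{eq:Vishakha}, whereas you propose to derive it from a conditioned-subordinator argument).
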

Note that the second component process here is not negligible when $j_n=j$ is fixed. In contrast, when $j_n\to\infty$, the second component process is negligible compared to the first one. 
The proof of Theorem \ref{thm:j fixed} can be found in Appendix \ref{sec:decomp} by again exploiting the connection to the Karlin model, following a recent development in \citet{wang26central}. The above result is an improvement of a recent development by \citet{bercu24martingale}, who proved the annealed convergence of $(C_j(n)-n^\alpha p_j\topp\alpha S_\alpha)/n^{\alpha/2}$ (which is the sum of the two components at $t=1$ above) by a martingale approach.

We conclude the introduction by emphasizing that our method is completely different from the one by \citet{banderier24phase}. Their approach is analytic and relies on certain related generating functions, and exploits the very nice structure underlying the Chinese restaurant process. It is a very powerful general method as it can be applied to establish limit theorems for several combinatorial structures besides the Chinese restaurant process. However, when applied to the Chinese restaurant process, the method seems limited; it is not clear how their method can be modified to obtain the more refined quenched limit theorems as established here. It would also be interesting to see whether second-order limit theorems can be established for phase transitions in other examples revealed in their paper.

\subsection*{The paper is organized as follows}
Section \ref{sec:prelim} recalls the preliminaries, while Sections \ref{sec:sub} and \ref{sec:crit} establish the limit theorems in the subcritical and critical regimes, respectively.

\subsection*{Acknowledgements}
 Y.W.~was partially supported by the Simons Foundation (MP-TSM-00002359).

%\newpage
\section{Preliminary results}\label{sec:prelim}

  \subsection{Chinese restaurant process}\label{sec:CRP}

Consider the Chinese restaurant process with $(\alpha,\theta)$-seating with $\alpha\in(0,1)$ and $\theta>-\alpha$. 
The process consists of a family of exchangeable random partitions $(\Pi_n)_{n\in\N}$, each of $[n]$, constructed consecutively. 
 The procedure goes as follows. Set $\Pi_1 = \{\{1\}\}$. 
 Suppose a sequence of partitions $\Pi_1,\dots,\Pi_n$ has been sampled, and moreover $\Pi_n = \{\Pi_{n,1},\dots,\Pi_{n,k}\}$ ($(\Pi_{n,j})_{j=1,\dots,k}$ are disjoint non-empty subsets of $[n]$ and $\bigcup_{j=1}^k \Pi_{n,j} = [n]$; in this case $\Pi_n$ is said to have $k$ components). Then, the partition $\Pi_{n+1}$ is obtained by 
\begin{enumerate}[(i)]
\item adding element $n+1$ to an existing block $j$ (i.e., setting $\Pi_{n+1,j} := \Pi_{n,j}\cup \{n+1\}$) with probability $(|\Pi_{n,j}|-\alpha)/(n+\theta)$; 
\item creating a new block with a single element $n+1$ (i.e., setting $\Pi_{n+1,k+1} := \{n+1\})$ with probability $(k\alpha+\theta)/(n+\theta)$;
\end{enumerate}
and all other existing blocks remain unchanged (i.e., setting $\Pi_{n+1,j} = \Pi_{n,j}$, for all $j=1,\dots,k$ that have not been involved). 

Let $(P_j)_{j\in\N}\equiv (P_j^{\alpha,\theta})_{j\in\N}$ denote the asymptotic frequencies of blocks (i.e., blocks of $(\alpha,\theta)$-partitions), and set $\calP = \sigma((P_j)_{j\in\N})$. The law of the decreasingly ordered sequence $(P_j^\downarrow)_{j\in\N}\equiv (P_j^{\alpha,\theta,\downarrow})_{j\in\N}$ is known as the Poisson--Dirichlet distribution with parameter $(\alpha,\theta)$. These are random elements from $\Delta_\infty = \{(p_j)_{j\in\N}: p_1\ge p_2\ge\cdots\ge 0, \sif j1 p_j = 1\}$.   
It is well known that
\[%\equh\label{eq:diversity}
\limn\frac{|\Pi_n|}{n^\alpha} = S_\alpha, \mbox{ almost surely,}
\]%\eque 
and also 
\equh\label{eq:S_alpha 2}
S_\alpha = \lim_{j\to\infty} j (P_j^{\downarrow})^\alpha\Gamma(1-\alpha), \mbox{ almost surely.}
\eque

 The following can be read from \citep{pitman06combinatorial} and some more details can be found in \citet{wang26central}.
\begin{lemma}\label{lem:P_j}
Let $(P_j^\downarrow)_{j\in\N}$ follow the Poisson--Dirichlet distribution with parameter $(\alpha,\theta)$, and $S_\alpha$ be as in \eqref{eq:S_alpha 2}. Set
\equh\label{eq:Gamma_j}
\Gamma_j:=\frac{S_{\alpha}}{\Gamma(1-\alpha)}\pp{P_j^\downarrow}^{-\alpha}, j\in\N.
\eque
\begin{enumerate}[(i)]
\item When $\theta=0$, the sequence $(\Gamma_j)_{j\in\N}$ has the law of consecutive arrival times of a standard Poisson process.
\item More generally for all $\theta>-\alpha$, 
the law of $(P_j^{\alpha,\theta,\downarrow})_{j\in\N}$ is absolutely continuous with respect to the law of $(P_j^{\alpha,0,\downarrow})_{j\in\N}$. 
More precisely, for every continuous and bounded function $f:\Delta_\infty\to\R$, we have 
\[%\equh\label{eq:RN}
\esp f\pp{(P_j^{\alpha,\theta,\downarrow})_{j\in\N}} = \esp \pp{\frac{\Gamma(\theta+1)}{\Gamma(\theta/\alpha+1)}S_\alpha^{\theta/\alpha}f\pp{(P_j^{\alpha,0,\downarrow})_{j\in\N}}},
\]%\eque
where $S_\alpha$ on the right-hand side is defined via \eqref{eq:S_alpha 2} using $P_j^\downarrow\equiv P_j^{\alpha,0,\downarrow}$. 
%:
\end{enumerate}
In particular,  by \eqref{eq:S_alpha 2} and \eqref{eq:Gamma_j}, for all $\alpha\in(0,1)$ and $\theta>-\alpha$, as $j\to\infty$
\[%\equh\label{eq:P_j}
P_j^\downarrow =  \pp{\frac{S_{\alpha}}{\Gamma(1-\alpha)}}^{1/\alpha} \Gamma_j^{-1/\alpha}    \sim \pp{\frac{S_{\alpha}}{\Gamma(1-\alpha)}}^{1/\alpha} j^{-1/\alpha}, \mbox{ almost surely.}
\]%\eque
\end{lemma}
\subsection{Karlin model}%\label{sec:Karlin}
We recall the notion of an infinite urn model. Let $(Y_i)_{i\in\N}$ be i.i.d.~random variables  taking values in $\N$ with $\proba(Y_1 = \ell) = p_\ell, \ell\in\N$. Without loss of generality we assume that $p_\ell$ is non-increasing in $\ell$. The event $Y_n = \ell$ is interpreted as  throwing a ball into the $\ell$-th urn in the $n$-th round. The numbers $(p_\ell)_{\ell\in\N}$ are referred to as the sampling frequencies of the model (with $\sif\ell1p_\ell=1$ and $p_\ell\ge 0,\ell\in\N$).
Karlin investigated thoroughly the case when $p_j$ decays at a polynomial rate. For analytical convenience, an equivalent assumption is on the function
\[%\equh\label{eq:Karlin}
\nu(x):=\max\ccbb{j\in\N:p_j\ge \frac 1x}, x>0,
\]%\eque
where $\nu(x)$ is regularly varying at infinity with index $\alpha\in(0,1)$, denoted by $\nu\in RV_\alpha$. That is, $\nu(x) = x^\alpha L(x)$ where $L$ is a slowly varying function at infinity. This is equivalent to $p_j$ decaying as $j^{-1/\alpha}$ as $j\to\infty$ up to a multiplicative slowly varying function.

The urn model induces a random partition of each $n\in\N$: $i$ and $j$ are in the same component of the partition, if and only if $Y_i = Y_j$. This is also known as the paintbox partition. The statistic of interest for us is the number of urns with exactly $j$ balls after $n$ rounds. That is,
\[
C_j(n) := \sif \ell1 \inddd{K_{n,\ell} = j} \qmwith K_{n,\ell}:=\summ i1n \inddd{Y_i = \ell}. 
\]
We use the same notation $C_j(n)$ as for the component counts of $(\alpha,\theta)$-partitions. The reason is  that the component count of size $j$ of an $(\alpha,\theta)$-partition has the same law as the urn count of size $j$ from a Karlin model with {\em random} sampling frequencies $(P_j^\downarrow)_{j\in\N}$. This is because $(\alpha,\theta)$-partitions are exchangeable and the claim then follows from de Finetti's theorem. This result  is essentially Kingman's representation theorem. A standard reference of ours is \citet{pitman06combinatorial}.

When working with the Karlin model corresponding to $(\alpha,\theta)$-partitions, the following lemma will be used several times.
\begin{lemma}\label{lem:N(D)} Suppose $\alpha\in(0,1),\theta = 0$. Then, 
\[
P_j^\downarrow = \pp{\frac{S_\alpha}{\Gamma(1-\alpha)}}^{1/\alpha}\Gamma_j^{-1/\alpha} = \frac{\Gamma_j^{-1/\alpha}}{\sif \ell1 \Gamma_\ell^{-1/\alpha}},
\]
where $(\Gamma_j)_{j\in\N}$ is a sequence of arrival times of a standard Poisson process $(N(t))_{t\ge 0}$ (i.e., $N(t):=\max\{j\in\N:\Gamma_j\le t\}$). Then, 
\[
\nu(t) = N(Dt^\alpha) \qmwith D := \frac{S_\alpha}{\Gamma(1-\alpha)} = \pp{\sif\ell1\Gamma_\ell^{-1/\alpha}}^{-\alpha}.
\]
We have
\equh\label{eq:Vishakha}
\pp{\frac{ N(Dn t) - Dnt}{n^{1/2}}}_{t\in[0,\infty)}\weakto \pp{\B_{Dt}}_{t\in[0,\infty)},
\eque
in $D[0,\infty)$ as $n\to\infty$, where in the limit the Brownian motion $\B$ is independent of $D$. 
\end{lemma}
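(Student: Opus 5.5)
The lemma has three parts, and I would prove them in order.

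\textbf{Representation of $P_j^\downarrow$ and $D$.} With $\theta=0$, Lemma \ref{lem:P_j}(i) says $\Gamma_j=D(P_j^\downarrow)^{-\alpha}$ is the sequence of arrival times of a standard Poisson process, where $D=S_\alpha/\Gamma(1-\alpha)$. Inverting gives $P_j^\downarrow=D^{1/\alpha}\Gamma_j^{-1/\alpha}$. Since $\sum_\ell P_\ell^\downarrow=1$ almost surely, we get $D^{1/\alpha}\sum_\ell\Gamma_\ell^{-1/\alpha}=1$. This yields both $D=(\sum_\ell\Gamma_\ell^{-1/\alpha})^{-\alpha}$ and the normalized form $P_j^\downarrow=\Gamma_j^{-1/\alpha}/\sum_\ell\Gamma_\ell^{-1/\alpha}$.

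\textbf{Identity for $\nu$.} The $\Gamma_j$ are strictly increasing. Therefore $P_j^\downarrow\ge 1/t$ holds iff $D^{1/\alpha}\Gamma_j^{-1/\alpha}\ge t^{-1}$, i.e.\ iff $\Gamma_j\le Dt^\alpha$. Taking the maximal such $j$ gives $\nu(t)=N(Dt^\alpha)$.

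\textbf{Functional limit \eqref{eq:Vishakha}.} The difficulty is that $D$ is a functional of the same Poisson process $N$, so we cannot simply condition on $D$. The remedy is Rényi-mixing (stable) convergence. Set $X_n(s):=(N(ns)-ns)/n^{1/2}$. By Donsker's theorem for renewal processes with exponential interarrivals, $X_n\weakto\B$ in $D[0,\infty)$.

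I would upgrade this to the following claim: for every bounded $\sigma(N)$-measurable $Z$ and every bounded continuous $f$,
\[
\esp\bigl(Zf(X_n)\bigr)\to\esp Z\,\esp f(\B).
\]
The proof proceeds in two reductions.
\begin{enumerate}[(a)]
\item Since $\sigma(\Gamma_1,\dots,\Gamma_k)\uparrow\sigma(N)$, martingale convergence lets us approximate $Z$ in $L^1$ by $Z_k$ measurable with respect to $\sigma(\Gamma_1,\dots,\Gamma_k)$. It then suffices to treat such $Z_k$ with $k$ fixed.
\item Let $N^{(k)}(s):=N(\Gamma_k+s)-k$ be the restarted process. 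It is a Poisson process independent of $\Gamma_1,\dots,\Gamma_k$. Moreover,
\[
\sup_{s\le T}\bigl|X_n(s)-(N^{(k)}(ns)-ns)/n^{1/2}\bigr|=O_{\proba}\bigl((k+\Gamma_k)/n^{1/2}\bigr)\to0 .
\]
\end{enumerate}
Independence of $N^{(k)}$ from $\Gamma_1,\dots,\Gamma_k$, together with Donsker's theorem applied to $N^{(k)}$, gives the factorization. It follows that $(X_n,D)\weakto(\B,D)$ with $\B$ independent of $D$.

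Finally, the map $(x,d)\mapsto x(d\,\cdot)$ from $D[0,\infty)\times(0,\infty)$ to $D[0,\infty)$ is continuous at every point where $x$ is continuous. By the continuous mapping theorem, $(N(Dnt)-Dnt)/n^{1/2}=X_n(Dt)$ converges to $\B_{Dt}$ with $\B$ independent of $D$.

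I expect the mixing step (a)--(b) to be the main obstacle. It needs careful bookkeeping of the $L^1$ approximation and of the uniform-on-compacts shift error. Everything else is direct.
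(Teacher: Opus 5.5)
Your treatment of the first two assertions matches the paper's. The paper reads $P_j^\downarrow=D^{1/\alpha}\Gamma_j^{-1/\alpha}$ off Lemma~\ref{lem:P_j}(i) and observes that $\nu(t)=\max\{j:\Gamma_j^{1/\alpha}\le D^{1/\alpha}t\}=N(Dt^\alpha)$.

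For \eqref{eq:Vishakha}, however, the paper gives no argument. It cites \citet[Section 3.1]{wang26central} and only remarks that the dependence of $D$ on $N$ makes the statement non-immediate. Your R\'enyi-mixing route is a correct, self-contained substitute. Its steps are:
\begin{itemize}
\item Donsker for $X_n$;
\item $L^1$-approximation of a bounded $\sigma(N)$-measurable $Z$ by $\esp(Z\mid\Gamma_1,\dots,\Gamma_k)$;
\item the restarted process $N^{(k)}$, independent of $(\Gamma_1,\dots,\Gamma_k)$;
\item hence $(X_n,D)\weakto(\B,D)$ with $\B$ independent of $D$, followed by the continuous time-scaling map.
\end{itemize}
Compared with the citation, it proves mixing convergence with respect to all of $\sigma(N)$. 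That gives joint convergence with $D$, with $S_\alpha$, or with a functional such as the density $S_\alpha^{\theta/\alpha}$ of Lemma~\ref{lem:P_j}(ii). This is the form in which the paper later uses the lemma, e.g.\ jointly with $D$ in Appendix~\ref{sec:decomp}. It also uses only the i.i.d.\ exponential interarrivals, not the specific form $D=(\sum_\ell\Gamma_\ell^{-1/\alpha})^{-\alpha}$. The citation is, of course, shorter.

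One step in (b) needs correcting. The quantity $\sup_{s\le T}\bigl|X_n(s)-(N^{(k)}(ns)-ns)/n^{1/2}\bigr|$ is not $O_{\proba}((k+\Gamma_k)/n^{1/2})$. It involves $\sup_{s\le T}\bigl(N(ns+\Gamma_k)-N(ns)\bigr)$, a scan statistic over windows of length $\Gamma_k$ that grows without bound, roughly like $\log n/\log\log n$. The conclusion that the difference is $o_{\proba}(1)$ still holds:
\begin{itemize}
\item The difference equals $X_n(s)-X_n(s+\Gamma_k/n)-(\Gamma_k-k)/n^{1/2}$.
\item Because $X_n\weakto\B$ with a continuous limit, the moduli $\sup_{s\le T,\,0\le h\le\delta}|X_n(s+h)-X_n(s)|$ are uniformly small in probability for large $n$ as $\delta\downarrow0$.
\item Since $\Gamma_k/n\to0$, the shift error therefore vanishes in probability.
\end{itemize}
To pass from this local-uniform approximation to $\esp(Z_kf(X_n))-\esp(Z_kf(Y_n))\to0$, with $Y_n(s):=(N^{(k)}(ns)-ns)/n^{1/2}$, take $f$ bounded Lipschitz. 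Such $f$ suffice to identify the joint limit of $(X_n,D)$, since the pair is tight and product test functions determine the law.
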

\begin{proof}
It suffices to notice that by definition,
\[
\nu(t) = \max\ccbb{j\in\N:P_j^\downarrow \ge \frac 1t} =\max\ccbb{j\in\N:\Gamma_j^{1/\alpha} \le D^{1/\alpha}t}.
\]
The convergence \eqref{eq:Vishakha} was established in \citet[Section 3.1]{wang26central}. Note that this convergence is not immediate as on the left-hand side $D$ depends on $N$.
\end{proof}
As a standard approach, we shall also work with the Poissonized model. Given the sampling frequencies of the Karlin model $(p_\ell)_{\ell\in\N}$ (which might be random), let $(\calN_\ell(t))_{t\ge 0}, \ell\in\N$ be a family of conditionally independent Poisson processes with respective rates $p_\ell$. Note that $\calN(t):=\sif \ell1 \calN_\ell(t)$ as a process indexed by $t$ is a standard Poisson process. This process is not to be confused with the standard Poisson process $(N(t))_{t\ge0}$  associated to $(P_j^\downarrow)_{j\in\N}$ as explained in Lemma \ref{lem:N(D)}, and both shall be involved in the proofs.   The processes $(\calN_\ell)_{\ell\in\N}$ also induce a sequence of random partitions in a way similar to the paintbox construction: $i\sim j$ if the $i$-th and $j$-th arrival times of $\calN$ are both from $\calN_\ell$ for some $\ell\in\N$.  Again, the statistic of interest is the number of components of size $j$ for the random partition at time $t$ (of $\{1,\dots,\calN(t)\}$), denoted by $\wt C_j(t)$. That is,
\equh\label{eq:wt C}
\wt C_j(t):=\sif \ell1 \inddd{\calN_\ell(t) = j}, j\in\N.
\eque
%\newpage
\section{The subcritical regime }\label{sec:sub}
In this section, the main result, Theorem \ref{thm:1},  is a functional central limit theorem for  $C_{j_n}(n)$ of the Karlin model in the subcritical regime. Theorem \ref{thm:CRP} is proved in Section \ref{sec:0 process} as a corollary. 

	In the subcritical regime, we consider $j_n\to\infty$ and 
\[%	\begin{equation}\label{eq: j_n subcrit}
		j_n\ll n^{\alpha/(\alpha+1)}.
\]%	\end{equation}
Let $(p_j)_{j\in\N}$ denote the sampling frequencies of the Karlin model and recall that $\nu(x) = \max\{j\in\N:p_j\ge 1/x\}$.  Write
	\begin{equation*}
		\sigma_{n,j_n}^2:=\frac\alpha{j_n}\nu\pp{\frac n{j_n}}.%\sim \frac{\alpha n^\alpha \Gamma(j_n-\alpha)L(n/j_n)}{\Gamma(j_n+1)}.
	\end{equation*}
Recall that Karlin's condition imposes that $\nu\in RV_\alpha$; that is
\[%\equh\label{eq:RV}
\limn \frac{\nu(nx)}{\nu(n)} = x^{\alpha}, \mfa x>0,
\]%\eque
and moreover the convergence is locally uniform in $x$ \citep{resnick87extreme}. 
In addition to the standard regular variation assumption, we need to impose a further assumption on the rate of convergence above. 
To state our condition we introduce
\[
\rho_{[a,b]}(y)\equiv\rho_{[a,b]}\topp\alpha(y) :=\sup_{x\in[a,b]}\abs{\frac{\nu(yx)}{\nu(y)}-x^\alpha}, \mbox{ for } 0\le a<b<\infty, y>0.
\]	
Set \[
s_n(t):=n\pp{1+\frac t{j_n}}_+, t\in\R.
\] 
	\begin{theorem}\label{thm:1}
Assume that $\alpha\in(0,1), \nu\in RV_\alpha$, $j_n\to\infty, j_n = o(n^{\alpha/(1+\alpha)})$,  $\sigma_{n,j_n}\to\infty$, and for some $\epsilon\in(0,1)$,
\equh\label{eq:j_n rate}
\limn j_n^{1/2}\rho_{[1-\epsilon,1+\epsilon]}(n/j_n) = 0. 
\eque
Then 
\[%\begin{equation}\label{eq: SC Main result}
\left(\frac{1}{\sigma_{n,j_n} }\left(C_{j_n}\pp{\floor{s_n(t)}}-\E C_{j_n}\pp{\floor{s_n(t)}}\right)\right)_{t\in \R} \Rightarrow (\zeta(t))_{t\in \R},
\]%\end{equation}
in $D(\R)$ as $n\to\infty$, where $(\zeta(t))_{t\in \R}$ is an Ornstein--Uhlenbeck process as in Theorem \ref{thm:CRP}.
\end{theorem}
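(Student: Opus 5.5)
The plan is to first prove the statement for the Poissonized counts $\wt C_{j_n}(s_n(t))$ from \eqref{eq:wt C}, and then de-Poissonize.

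\textbf{Poissonized model.} For fixed deterministic frequencies $(p_\ell)$, the summands $\inddd{\calN_\ell(s)=j_n}$ are independent across $\ell$. The process $t\mapsto \wt C_{j_n}(s_n(t))$ is therefore a sum of independent bounded processes. Finite-dimensional convergence then follows from the Lindeberg (or Lyapunov) CLT, because each summand is bounded by $1$ and $\sigma_{n,j_n}\to\infty$.

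\textbf{Covariance computation.} The central step is to show that
\[
\frac1{\sigma_{n,j_n}^2}\Cov\pp{\wt C_{j_n}(s_n(s)),\wt C_{j_n}(s_n(t))}\to e^{-|s-t|}.
\]
Write $\Cov=\sum_\ell\pp{\proba(\calN_\ell(s_n(s))=j_n,\calN_\ell(s_n(t))=j_n)-\proba(\cdot)\proba(\cdot)}$, and take $s\le t$.

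The joint probability equals $\proba(\calN_\ell(s_n(s))=j_n)\,e^{-p_\ell n(t-s)/j_n}$.

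The mass of $\ell\mapsto \proba(\calN_\ell(s_n(s))=j_n)$ concentrates where $p_\ell n\approx j_n$, in a window of relative width $j_n^{-1/2}$. On that window $e^{-p_\ell n(t-s)/j_n}\approx e^{-(t-s)}$.

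Next, express the sum as an integral against $\d\nu(1/p)$: $\sum_\ell f(p_\ell)=\int f(1/x)\,\nu(\d x)$. Substitute $x=(n/j_n)u$. Using \eqref{eq:j_n rate}, one can replace $\nu((n/j_n)u)$ by $\nu(n/j_n)u^\alpha$ on $u\in[1-\epsilon,1+\epsilon]$. This needs a rate: the Poisson kernel has derivative of order $j_n^{1/2}$ across the window, and an integration by parts costs $j_n^{1/2}\rho$. Outside the window, Chernoff bounds give exponentially small contributions, which regular variation (Potter bounds) absorbs.

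The main term becomes
\[
\nu(n/j_n)\int \frac{(j_n/u)^{j_n}e^{-j_n/u}}{j_n!}\,\d(u^\alpha)\sim \frac{\alpha}{j_n}\nu(n/j_n),
\]
by Laplace's method or Stirling.

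The product-of-probabilities term has order $j_n^{-1/2}\sigma^2_{n,j_n}$ and is negligible, since the square of a kernel concentrated on width $j_n^{-1/2}$ loses a factor $j_n^{-1/2}$. The same calculation at general $s,t$ gives $\E\wt C$ and locally uniform control.

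\textbf{Tightness.} I would use a moment criterion for increments, e.g.\ Bickel–Wichura/Billingsley's bound $\E[|X(t)-X(s)|^2|X(u)-X(t)|^2]\le C|u-s|^{2}$ for $s<t<u$, or a fourth moment of increments. This holds for sums of independent centered bounded terms once the increment variances satisfy $\var(\wt C(s_n(t))-\wt C(s_n(s)))\le C\sigma^2|t-s|$.

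That variance bound comes from the same integral estimates, as $2(1-e^{-|t-s|})$. On a discrete grid of mesh $1/j_n$ the process is essentially piecewise constant, so only $|t-s|\gtrsim j_n^{-1}$ or pure jump considerations matter. I expect this step to need some care but to be routine.

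\textbf{De-Poissonization.} Couple $C_{j_n}(m)=\wt C_{j_n}(\tau_m)$, where $\tau_m$ is the $m$-th arrival time of $\calN$. Since $|\tau_{s_n(t)}-s_n(t)|=O_P(n^{1/2})$ uniformly on compacts, and $n^{1/2}\ll n/j_n$ because $j_n\ll n^{1/2}$, the time shift in the $t$-scale is $o(1)$.

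It remains to bound $\wt C_{j_n}(s+\delta)-\wt C_{j_n}(s)$ over $\delta=O(n^{1/2})$. The number of balls added is about $n^{1/2}$, and each changes the count by at most one only if it lands in an urn of size $j_n$ or $j_n-1$. The expected change is $n^{1/2}\cdot j_n\E C/n\approx n^{1/2}j_n\sigma^2/n$. Dividing by $\sigma$ gives $\sigma j_n n^{-1/2}$, which is $\ll 1$ using $\sigma^2\approx n^\alpha j_n^{-1-\alpha}$ and $j_n\ll n^{\alpha/(1+\alpha)}$.

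The centering $\E C_{j_n}$ versus $\E\wt C_{j_n}$ is handled by the same estimate.

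\textbf{Main obstacle.} I expect the hardest step to be the uniform replacement of $\nu$ by its power law inside the covariance integral under only \eqref{eq:j_n rate}. That is, showing that the error $j_n^{1/2}\rho$ truly controls the discrepancy after integration by parts against the sharply peaked Poisson kernel.
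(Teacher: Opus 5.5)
Apart from tightness, your plan follows the paper. You Poissonize and get finite-dimensional convergence from Lindeberg--Feller for independent bounded summands. You compute the covariance by integrating the Poisson kernel against $\nu$; your integration-by-parts heuristic, with relative error of order $j_n^{1/2}\rho_{[1-\epsilon,1+\epsilon]}(n/j_n)$, is exactly how the paper uses \eqref{eq:j_n rate}, via the Gamma-variable representation of $I_{n,j_n}$. The paper skips your window argument for $e^{-p_\ell n(t-s)/j_n}$ by using the exact identity $\proba(\calN_\ell(a)=j)e^{-p_\ell(b-a)}=(a/b)^j\proba(\calN_\ell(b)=j)$. You then de-Poissonize through $C_{j_n}(m)=\wt C_{j_n}(\tau_m)$. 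Comparing $\E C_{j_n}$ with $\E\wt C_{j_n}$ through this coupling also works, in place of the paper's local binomial--Poisson bound.

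In the de-Poissonization, control the centered part uniformly in $t$ through continuity of the Poissonized limit, as the paper does with its time-change lemma. An expected jump count in one fixed window only handles a fixed $t$. It is what you need for the mean, where it amounts to the paper's bound $|\partial_u\E\wt C_{j_n}(u)|\le C(j_n/n)\sigma_{n,j_n}^2$.

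For tightness the routes differ. The paper writes $Z_n=Z_n(0)+A_n+M_n'$, proves $\sigma_{n,j_n}^{-1}M_n'\Rightarrow(\B_{2t})_{t\ge0}$ by a martingale FCLT, and controls the compensator $A_n$ by Cauchy--Schwarz, using only \eqref{eq:EC uniform}. Your product-moment criterion is a viable, more elementary alternative, but the justification you give fails. For independent centered urn-wise increments $\Delta_\ell,\Delta'_\ell$ over $[s,t]$ and $[t,u]$,
\[
\E\Big[\Big(\sum_\ell\Delta_\ell\Big)^2\Big(\sum_\ell\Delta'_\ell\Big)^2\Big]\le\Var\Big(\sum_\ell\Delta_\ell\Big)\Var\Big(\sum_\ell\Delta'_\ell\Big)+2\Cov\Big(\sum_\ell\Delta_\ell,\sum_\ell\Delta'_\ell\Big)^2+\sum_\ell\E\big[\Delta_\ell^2(\Delta'_\ell)^2\big].
\]
Boundedness plus an increment-variance bound give only $\sum_\ell\E[\Delta_\ell^2(\Delta'_\ell)^2]\le C\sigma_{n,j_n}^2|t-s|$. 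After normalization this is $C\sigma_{n,j_n}^{-2}|t-s|$, which is not $O(|u-s|^2)$ when $|u-s|\ll\sigma_{n,j_n}^{-2}$.

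Your fallback uses the wrong scale. The map $t\mapsto\wt C_{j_n}(s_n(t))$ jumps at rate about $2\sigma_{n,j_n}^2$, since $\E\langle M_n'\rangle(t)\sim 2t\sigma_{n,j_n}^2$. Here $\sigma_{n,j_n}^2$ is of order $n^\alpha j_n^{-1-\alpha}$ up to slowly varying factors and can far exceed $j_n$, so the process is not nearly constant on a grid of mesh $1/j_n$.

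The missing observation is per urn. Let $I_\ell,I'_\ell$ be the uncentered indicator increments. Then $I_\ell I'_\ell\ne0$ forces $\calN_\ell(s_n(s))<j_n=\calN_\ell(s_n(t))<\calN_\ell(s_n(u))$. For $s<t<u$ in a compact, this has probability at most $Cp_\ell^2(n/j_n)^2|t-s||u-t|\,\proba(\calN_\ell(s_n(s))=j_n-1)$. The centerings add terms of order $\proba(I_\ell\ne0)\proba(I'_\ell\ne0)$, which are no larger. Since $\sum_\ell p_\ell^2\proba(\calN_\ell(r)=j_n-1)=j_n(j_n+1)r^{-2}\E\wt C_{j_n+1}(r)$, \eqref{eq:EC uniform} gives $\sum_\ell\E[\Delta_\ell^2(\Delta'_\ell)^2]\le C\sigma_{n,j_n}^2|t-s||u-t|$.

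You also need the increment-variance bound uniformly in $n$ and $|t-s|$, not just its limit $2(1-e^{-|t-s|})$. It follows from the bound below for $a<b$, together with the derivative bound on the mean:
\[
\Var\big(\wt C_{j_n}(b)-\wt C_{j_n}(a)\big)\le\E\wt C_{j_n}(a)-\E\wt C_{j_n}(b)+2\big(1-(a/b)^{j_n}\big)\E\wt C_{j_n}(b).
\]
With these two bounds the criterion holds with $C|u-s|^2$, and no grid argument is needed. Your route uses only independence across urns and first-moment estimates. The paper's route avoids fourth moments and exhibits the martingale part $\sqrt2\,\B$ of the Ornstein--Uhlenbeck limit.
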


\begin{remark}
If the rate at which  $\rho_{[1-\epsilon,1+\epsilon]}(y)\to 0$ is assumed to be faster than a polynomial rate, it then follows that so is $\rho_{[a,b]}(y)$ for all $0<a<b$.  Indeed, assume say 
\[
\rho_{[1-\epsilon,1+\epsilon]}(y)\le \wb \rho_{[1-\epsilon,1+\epsilon]}(y), \mfa y>0,
\]
with $\wb\rho_{[1-\epsilon,1+\epsilon]}(y) = y^{-\gamma}L(y)$ for some $\gamma\ge 0$ and a slowly varying function $L$. It then follows that for all $0<a<b<\infty$, one can find a constant $C_{\epsilon,a,b}$ such that 
\[
\rho_{[a,b]}(y)\le C_{\epsilon, a,b}\wb \rho_{[1-\epsilon,1+\epsilon]}(y), \mfa y>0. 
\]
To see the above, we explain only how to bound $|\nu(yx)/\nu(y)-x^\alpha|$ for $x>1$ using $\rho_{[1-\epsilon,1+\epsilon]}(y)$. Let $m\in\N$ be such that one can set $x_0 := x, x_1 := x/(1+\epsilon),\dots,x_m := x/(1+\epsilon)^m, x_{m+1} := 1$ such that $1<x_m/x_{m+1}\le1+\epsilon$. Then, one can write
\begin{align*}
\abs{\frac{\nu(yx)}{\nu(y)}-x^\alpha} & = \abs{\prodd i0m \frac{\nu(yx_i)}{\nu(yx_{i+1})} - \prodd i0m \frac{x_i^\alpha}{x_{i+1}^\alpha}}\\
%& \le \abs{\frac{\nu(yx_0)}{\nu(yx_{1})} - \pp{\frac{x_0}{x_1}}^\alpha} \prodd i0m \frac{\nu(yx_i)}{\nu(yx_{i+1})}\\
& \le \summ j0m \pp{\frac{x_0}{x_j}}^\alpha \abs{\frac{\nu(yx_j)}{\nu(yx_{j+1})} - \pp{\frac{x_j}{x_{j+1}}}^\alpha}\prodd i{j+1}m\frac{\nu(yx_i)}{\nu(yx_{i+1})},
\end{align*}
with $\prodd i{m+1}m(\cdots)\equiv 1$. It is now clear that the right-hand side above is bounded by $C\summ j0{m}\rho_{[1-\epsilon,1+\epsilon]}(yx_{j+1}) \le C\wb\rho_{[1-\epsilon,1+\epsilon]}(y)$ where the constant does not depend on $y$. 
\end{remark}
\begin{remark}
Note that under the general assumption \eqref{eq:j_n rate}, $j_n\to\infty$ is allowed but the rate cannot be too fast unless $\rho_{[1-\epsilon,1+\epsilon]}(y)\to 0$ fast enough. 
For example, if $\nu(x) = x^\alpha \log x$ for $x$ large enough, then $\rho_{[1-\epsilon,1+\epsilon]}(y) \le C (\log y)\inv$ for $y$ large enough. Then, \eqref{eq:j_n rate} is equivalent to $j_n = o(\log ^2n)$. 
\end{remark}
\begin{remark}The assumption $\sigma_{n,j_n}^2 = (\alpha/j_n)\nu(n/j_n)\to\infty$ is needed if $j_n = n^{\alpha/(1+\alpha)}L(n)$ for some slowly varying function $L$. This assumption is equivalent to $j_n = o(\nu(n/j_n))$, which is always satisfied for $(\alpha,\theta)$-partitions under \eqref{eq:j_n upper} (and hence there is no need to mention this assumption in Theorem \ref{thm:CRP}).
\end{remark}

%\newpage

\subsection{Convergence of the Poissonized model}
Let $\wt C_{j_n}(t) = \sif\ell1 \inddd{\calN_\ell(t) = j_n}$ denote the urn count of the Poissonized Karlin model as in \eqref{eq:wt C}.
In this section we shall prove the following.
\begin{proposition}\label{prop:1}
Under the assumption of Theorem \ref{thm:1}, we have
\[%\begin{equation}\label{eq: SC Main result}
\left(\frac{1}{\sigma_{n,j_n} }\left(\wt C_{j_n}\pp{s_n(t)}-\E \wt C_{j_n}\pp{s_n(t)}\right)\right)_{t\in \R} \Rightarrow (\zeta(t))_{t\in \R},
\]%\end{equation}
in $D(\R)$ as $n\to\infty$, where $(\zeta(t))_{t\in \R}$ is an Ornstein--Uhlenbeck process as in Theorem \ref{thm:CRP}.
\end{proposition}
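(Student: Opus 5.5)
We prove Proposition \ref{prop:1} by the usual route for functional limit theorems: convergence of the finite-dimensional distributions followed by tightness in $D([-T,T])$ for every $T>0$. The Poissonized count is a sum of independent terms, which is what makes this work. Write
\[
\wt C_{j_n}(s_n(t))-\E\wt C_{j_n}(s_n(t))=\sif\ell1\xi_{n,\ell}(t),\qquad \xi_{n,\ell}(t):=\inddd{\calN_\ell(s_n(t))=j_n}-\proba(\calN_\ell(s_n(t))=j_n).
\]
The processes $\xi_{n,\ell}$ are independent in $\ell$, and each is a centered process bounded by $1$.

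The first and main step is the covariance computation. We claim that for fixed $s\le t$,
\[
\frac1{\sigma^2_{n,j_n}}\sif\ell1\cov\pp{\inddd{\calN_\ell(s_n(s))=j_n},\inddd{\calN_\ell(s_n(t))=j_n}}\to e^{-(t-s)}.
\]
By the independent increments of $\calN_\ell$, the joint probability equals $\proba(\calN_\ell(s_n(s))=j_n)\,e^{-p_\ell n(t-s)/j_n}$. The subtracted product of the marginal probabilities is negligible: it is of order $\sum_\ell \proba(\cdot)^2$, and since the local probabilities are $O(j_n^{-1/2})$ this is $O(j_n^{-1/2})$ times the main sum.

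The sum $\sum_\ell \proba(\calN_\ell(s_n(s))=j_n)$ concentrates on those $\ell$ with $p_\ell s_n(s)=j_n+O(j_n^{1/2})$, i.e. $p_\ell\approx j_n/n$. On this window $e^{-p_\ell n(t-s)/j_n}=e^{-(t-s)}(1+O(j_n^{-1/2}))$. To evaluate the sum we write it as a Stieltjes integral against $\nu$:
\[
\sif\ell1 e^{-p_\ell x}\frac{(p_\ell x)^{j}}{j!}=\int_0^\infty e^{-x/y}\frac{(x/y)^j}{j!}\,\d\nu(y).
\]
We then substitute $\nu(y)\approx\nu(n/j_n)(yj_n/n)^\alpha$. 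If $\nu$ were exactly a power, the Gamma integral would give $\alpha\nu(n/j_n)\Gamma(j-\alpha)/\Gamma(j+1)\cdot j_n^{\alpha}\sim(\alpha/j_n)\nu(n/j_n)=\sigma^2_{n,j_n}$. This is the identification of $\sigma_{n,j_n}^2$.

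This substitution is the step I expect to be the main obstacle. The Poisson kernel has width $j_n^{-1/2}$ in relative scale while its height is of order $j_n^{1/2}$ relative to its mass. So an integration by parts transfers the error $\rho_{[1-\epsilon,1+\epsilon]}(n/j_n)$ onto the derivative of the kernel and amplifies it by roughly $j_n^{1/2}$. Hypothesis \eqref{eq:j_n rate} is exactly what makes this amplified error vanish. Concretely, I would integrate by parts to get $\int \nu(y)\,\partial_y K(y)\,\d y$ with $K$ the Poisson kernel, and compare with the pure power case. I would bound the difference on $y\in[(1-\epsilon)n/j_n,(1+\epsilon)n/j_n]$ by $\nu(n/j_n)\rho\int|\partial_yK|\,\d y\lesssim \nu(n/j_n)\rho\, j_n^{1/2}\cdot j_n^{-1}$, which is $o(\sigma_{n,j_n}^2)$. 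Outside this window, Chernoff bounds for Poisson tails together with Potter bounds on $\nu$ give an exponentially small contribution. The same argument, with $s_n(s)$ in place of $n$, works uniformly for $s$ in compact sets, because $s_n(s)/n\to1$.

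Given the covariances, finite-dimensional convergence follows from the Lindeberg--Feller theorem: the summands are bounded by $1$ while the variance $\sigma_{n,j_n}^2\to\infty$, and the Cramér--Wold device handles linear combinations. The limit is the centered Gaussian process with covariance $e^{-|t-s|}$, i.e. $\zeta$.

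For tightness I would use a Chentsov-type moment bound. For $r<s<t$ in $[-T,T]$ we aim to show
\[
\E\bb{(Z_n(s)-Z_n(r))^2(Z_n(t)-Z_n(s))^2}\le C\pp{(t-r)^{2}+\sigma_{n,j_n}^{-2}(t-r)},
\]
where $Z_n$ is the normalized process. Expanding the fourth moment over the independent $\ell$ gives such a bound. Each increment $\xi_{n,\ell}(t)-\xi_{n,\ell}(s)$ is nonzero only if $\calN_\ell$ has a jump in $(s_n(s),s_n(t)]$, or more precisely if the event $\{\calN_\ell=j_n\}$ is entered or left there. The relevant probabilities are of order $p_\ell n(t-s)/j_n\approx (t-s)$ times the local probability, and summing over $\ell$ as in the first step yields the bound.

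The term $\sigma_{n,j_n}^{-2}(t-r)$ does not fit the standard criterion directly. It is handled in the usual way for such counting processes: we discretize to a grid of mesh $\sigma_{n,j_n}^{-2}$, and between grid points we use monotone bounds, since on short intervals the process changes only through few jumps, each of size $\sigma_{n,j_n}^{-1}$. Alternatively, one can apply Billingsley's Theorem 13.5 variant, which allows such discrete corrections. This gives tightness in $D(\R)$ with the local $J_1$ topology and completes the proof.
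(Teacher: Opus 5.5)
Your finite-dimensional step (covariance, then Lindeberg--Feller with Cram\'er--Wold) follows the paper. So does your treatment of the mean: integration by parts against $\nu$, the window error $\rho_{[1-\epsilon,1+\epsilon]}(n/j_n)$ amplified by $j_n^{1/2}$, and Chernoff and Potter bounds off the window. The genuine difference is tightness. The proposal is correct in outline, but a few steps need filling in, noted below.

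\textbf{Tightness.} The paper writes $\wt C_j(t)-\E\wt C_j(t)=\int_0^t(B_j(s)-\E B_j(s))\,\d s+M_j(t)$ via the compensators of the $\calN_\ell$. It shows $\sigma_{n,j_n}^{-1}M_n'\weakto(\B_{2t})_{t\ge0}$ by Whitt's martingale FCLT, the main work being $\sigma_{n,j_n}^{-2}\aa{M_n'}(t)\to2t$. It then proves tightness of the compensator part in $C[0,T]$ by a Cauchy--Schwarz second-moment bound. This uses only second moments and exhibits the OU dynamics (martingale noise of variance $2t$ plus a restoring drift).

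Your Chentsov route uses independence over $\ell$ directly: one fourth-moment expansion plus Billingsley's Theorem 13.5, with no martingale machinery. It needs more care than you give it, though.

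\emph{The extra term is avoidable.} The term $\sigma_{n,j_n}^{-2}(t-r)$ only comes from bounding the diagonal term crudely. The indicator $\inddd{\calN_\ell(s_n(\cdot))=j_n}$ changes on both $(s_n(r),s_n(s)]$ and $(s_n(s),s_n(t)]$ only if urn $\ell$ reaches $j_n$ in the first interval and leaves in the second. Using $p_\ell^2\proba(\calN_\ell(u)=j-1)=\frac{j(j+1)}{u^2}\proba(\calN_\ell(u)=j+1)$, the diagonal sum is $O(\sigma_{n,j_n}^2(s-r)(t-s))$. The centering corrections are smaller, since $|\proba(\calN_\ell(s_n(t))=j_n)-\proba(\calN_\ell(s_n(s))=j_n)|\le C(t-s)j_n^{-1/2}$ uniformly in $\ell$. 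You therefore get the bound $C(t-r)^2$ outright, and no grid is needed.

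\emph{If you keep the grid.} Note that $\wt C_{j_n}$ is not monotone. The in-cell oscillation is controlled either by the number of entries and exits in a cell (a sum of independent Bernoulli variables with mean $O(1)$), or by writing $\wt C_{j_n}$ as the difference of the nondecreasing counts $\#\{\ell:\calN_\ell\ge j_n\}$ and $\#\{\ell:\calN_\ell\ge j_n+1\}$.

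\emph{The second-moment sums.} The bound $\sum_\ell\E(\text{increment})^2\le C\sigma_{n,j_n}^2(s-r)$ should go through the identity $p_\ell\proba(\calN_\ell(u)=j-1)=\frac ju\proba(\calN_\ell(u)=j)$ and the uniform mean estimate for $j_n$ and $j_n+1$. Your heuristic $p_\ell n/j_n\approx1$ holds only inside the window.

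\textbf{Covariance.} Your bound $\max_\ell\proba(\calN_\ell(\cdot)=j_n)\le Cj_n^{-1/2}$ for the product-of-marginals term is simpler than the paper's explicit asymptotics for $\Psi_{n,2}$. However, you assert rather than prove that $\sum_\ell\proba(\calN_\ell(s_n(s))=j_n)$ concentrates on a window of width $j_n^{1/2}$. This is true under \eqref{eq:j_n rate}, but the paper avoids it entirely with the exact identity $\sum_\ell\proba(\calN_\ell(s_n(s))=j_n)e^{-p_\ell(s_n(t)-s_n(s))}=(s_n(s)/s_n(t))^{j_n}\E\wt C_{j_n}(s_n(t))$. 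This reduces the covariance directly to the mean estimate, since $(s_n(s)/s_n(t))^{j_n}\to e^{-(t-s)}$.
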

We first compute the asymptotic behavior of the first moment. Introduce
\[
I_{n,j}(t):=\int_0^\infty \pp{j-z}e^{-z}z^{j-1}\nu\pp{\frac {nt}z}\d z, \quad t>0.
\]
\begin{lemma}%\label{lem:I_n}
Assume $\alpha$, $\nu$, and $(j_n)_{n\in\N}$ satisfy the assumptions in Theorem \ref{thm:1}. Then for every $t\in(1-\epsilon,1+\epsilon)$, 
 we have
\equh\label{eq:I_n asymp}
I_{n,j_n}(t)\sim \alpha \Gamma(j_n)\nu\pp{\frac n{j_n}}t^\alpha.
\eque
In particular, 
\equh\label{eq:E wt C}
\esp \wt C_{j_n}(nt) =\frac{I_{n,j_n}(t)}{\Gamma(j_n+1)} \sim \alpha t^\alpha \frac1{j_n}\nu\pp{\frac n{j_n}} = t^\alpha\sigma_{n,j_n}^2.
\eque
Moreover, we have local uniform convergence in $t$ in the following sense:
\begin{align}%\label{eq:I_n uniform}
\limn\sup_{t\in[1-\epsilon',1+\epsilon']}\abs{\frac{I_{n,j_n}(t)}{\alpha\Gamma(j_n)\nu(n/j_n) t^\alpha} - 1} = 0, \nonumber\\
\limn\sup_{t\in[1-\epsilon',1+\epsilon']}\abs{\frac{\esp\wt C_{j_n}(nt)}{\sigma_{n,j_n}^2 t^\alpha} - 1} = 0, \label{eq:EC uniform}
\end{align}
for all $\epsilon'\in(0,\epsilon)$.
\end{lemma}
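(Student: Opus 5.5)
The plan has three steps: obtain the integral representation, extract the main term by an exact Gamma computation, and control the error inside and outside a window around the saddle point $z=j_n$.

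\textbf{Step 1 (representation).} Start from the Poissonized model:
\[
\esp\wt C_{j}(nt)=\sif\ell1 e^{-p_\ell nt}\frac{(p_\ell nt)^{j}}{j!}=\int_{(0,\infty)} e^{-nt/x}\frac{(nt/x)^{j}}{j!}\,\d\nu(x).
\]
Integrate by parts and substitute $z=nt/x$. Since $\frac{\d}{\d z}(e^{-z}z^j)=(j-z)e^{-z}z^{j-1}$, this gives $\esp\wt C_j(nt)=I_{n,j}(t)/\Gamma(j+1)$, which is the identity in \eqref{eq:E wt C}. The boundary terms vanish for two reasons: $\nu(x)=0$ for $x<1/p_1$, and $\nu(x)\le x$ (because $\sum_{\ell\le\nu(x)}p_\ell\le1$), which is killed by $e^{-z}z^j$ as $z\to0$. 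It therefore suffices to prove the asymptotics of $I_{n,j_n}(t)$, uniformly in $t\in[1-\epsilon',1+\epsilon']$.

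\textbf{Step 2 (main term).} Write
\[
\nu(nt/z)=\nu(n/j)\Bigl[(jt/z)^\alpha+R_n(z,t)\Bigr],\qquad R_n(z,t):=\frac{\nu\bigl((n/j)(jt/z)\bigr)}{\nu(n/j)}-\Bigl(\frac{jt}{z}\Bigr)^\alpha .
\]
The main term can be computed exactly:
\[
\int_0^\infty(j-z)e^{-z}z^{j-1-\alpha}\,\d z=j\Gamma(j-\alpha)-\Gamma(j+1-\alpha)=\alpha\Gamma(j-\alpha).
\]
It therefore contributes $\alpha\Gamma(j-\alpha)j^\alpha t^\alpha\nu(n/j)\sim\alpha\Gamma(j)t^\alpha\nu(n/j)$, using $\Gamma(j-\alpha)j^\alpha/\Gamma(j)\to1$. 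It remains to show that
\[
\int_0^\infty|j-z|e^{-z}z^{j-1}|R_n(z,t)|\,\d z=o(\Gamma(j_n))
\]
uniformly in $t$.

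\textbf{Step 3 (error terms).} Choose $\delta>0$ so small that $jt/z\in[1-\epsilon,1+\epsilon]$ whenever $|z/j-1|\le\delta$ and $t\in[1-\epsilon',1+\epsilon']$.

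\emph{Inside the window} $|z-j|\le\delta j$, we have $|R_n|\le\rho_{[1-\epsilon,1+\epsilon]}(n/j)$. By Cauchy--Schwarz against the Gamma$(j)$ density,
\[
\int|j-z|e^{-z}z^{j-1}\,\d z\le\Gamma(j)\sqrt{j}.
\]
The contribution is thus at most $\Gamma(j_n)\,j_n^{1/2}\rho_{[1-\epsilon,1+\epsilon]}(n/j_n)=o(\Gamma(j_n))$ by \eqref{eq:j_n rate}. This is exactly where the rate assumption enters.

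\emph{Outside the window}, bound $|R_n|\le\nu(nt/z)/\nu(n/j)+(jt/z)^\alpha$ and split the range of $z$ into two parts.
\begin{itemize}
\item Where $nt/z\ge x_0$, Potter's bounds give $\nu(nt/z)/\nu(n/j)\le C\max\{(j/z)^{\alpha+\eta},(j/z)^{\alpha-\eta}\}$. This uses $n/j\to\infty$, which holds since $j_n=o(n^{\alpha/(1+\alpha)})$.
\item Where $z>nt/x_0$, we simply have $\nu(nt/z)\le\nu(x_0)$, while $\nu(n/j)\to\infty$.
\end{itemize}
In both parts one is left with Gamma$(j)$ moments of $|j-z|(j/z)^{\alpha\pm\eta}$ restricted to $|z/j-1|>\delta$. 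These are $\Gamma(j)\,\mathrm{poly}(j)\,e^{-c_\delta j}=o(\Gamma(j))$ by standard Gamma large-deviation bounds, which are valid once $j>\alpha+\eta+1$. All constants are uniform in $t$ in the compact interval, which yields both uniform statements; \eqref{eq:EC uniform} then follows by dividing by $\Gamma(j_n+1)$.

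I expect the main obstacle to be this outside-window control for small $z$, where $\nu(nt/z)$ is large. The crude bound $\nu(x)\le x$ only gives a factor polynomial in $n$, which $e^{-cj_n}$ need not beat when $j_n$ grows slowly. This is why Potter bounds relative to $\nu(n/j_n)$, rather than absolute bounds, must be used there.
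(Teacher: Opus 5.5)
Your proposal is correct and follows essentially the paper's route: the same Gamma-integral representation of $I_{n,j_n}(t)$, the same split of $\nu(nt/z)/\nu(n/j_n)$ into $(j_nt/z)^\alpha$ plus a remainder, the same window bound via $\rho_{[1-\epsilon,1+\epsilon]}(n/j_n)$ with its $\sqrt{j_n}$ factor, and Potter bounds plus exponential Gamma tails outside the window. The differences are small improvements. You evaluate the main term exactly via $\int_0^\infty(j-z)e^{-z}z^{j-1-\alpha}\d z=\alpha\Gamma(j-\alpha)$, where the paper uses a Taylor expansion and Chernoff bounds for $J_{n,1}$. You also treat the region $nt/z<x_0$ separately instead of applying Potter's bound globally, as the paper does.
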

\begin{remark}
In the sequel, we shall only need $t$ in a shrinking neighborhood of $1$. If the assumption \eqref{eq:j_n rate} is relaxed to $\limn j_n^{1/2}\rho_{[a,b]}(n/j_n) = 0$ for $0<a<b<\infty$, then the corresponding local uniform convergence holds over $(a,b)$. 
\end{remark}
\begin{proof}
Note that 
\begin{align*}
\esp\wt C_{j_n}(nt)& = \sif\ell1\frac{(ntp_\ell)^{j_n}}{j_n!}e^{-ntp_\ell} = \frac1{\Gamma(j_n+1)}\int_0^\infty (nt/x)^{j_n}e^{-nt/x}\d \nu(x)\\
& = \frac1{\Gamma(j_n+1)}\int_0^\infty e^{-nt/x}\pp{\frac{j_n}x-\frac{nt}{x^2}}\pp{\frac{nt}x}^{j_n}\nu(x)\d x = \frac{I_{n,j_n}(t)}{\Gamma(j_n+1)}. 
\end{align*}
It suffices to prove the statements concerning $I_{n,j_n}(t)$. 

Let $G_x$ be a Gamma random variable with parameter $x>0$ (i.e., with density function proportional to $z^{x-1}e^{-z}, z>0$). Write $W_n :=(j_n-G_{j_n})/\sqrt{j_n}$. So $\esp W_n = 0$ and $W_n\weakto\calN(0,1)$ as $n\to\infty$. Write
\begin{align*}
I_{n,j_n}(t)&= \int_0^\infty e^{-z}(j_n-z)z^{j_n-1}\nu\pp{\frac {nt}z}\d z = \Gamma(j_n)\esp\pp{(j_n-G_{j_n})\nu\pp{\frac {nt}{G_{j_n}}}} \nonumber\\
&= \nu(n/j_n)\Gamma(j_n)j_n^{1/2}\esp \pp{W_n\frac{\nu\spp{(n/j_n)(1-W_n/j_n^{1/2})^{-1}t}}{\nu(n/j_n)}}.
\end{align*}
Thus,
\equh
\frac{I_{n,j_n}(t)}{\Gamma(j_n)\nu\pp{n/{j_n}}} = j_n^{1/2} (J_{n,1}(t)+J_{n,2}(t)),\label{eq:decomp I_n}
\eque
with 
\begin{align*}
J_{n,1}(t)&:= \esp\pp{ W_n\pp{1-\frac{W_n}{j_n^{1/2}}}^{-\alpha}}t^\alpha,\\
J_{n,2}(t)&:=\esp \pp{W_n\pp{\frac{\nu\spp{(n/j_n)(1-W_n/j_n^{1/2})^{-1}t}}{\nu(n/j_n)}- \pp{1-\frac{W_n}{j_n^{1/2}}}^{-\alpha}t^\alpha}}.
\end{align*}
We have
\begin{align*}
 \esp\pp{W_n\pp{1-\frac{W_n}{j_n^{1/2}}}^{-\alpha}} \sim \frac\alpha{\sqrt{j_n}}. 
\end{align*}
Indeed,
set
$\Omega_{n,\delta}:=\{|W_n/\sqrt{j_n}|\le \delta\}$
	for some $\delta\in (0,1)$ fixed. We have, by Taylor's expansion,
	\begin{align*}
		\E \left(W_n\left(1-\frac{W_n}{\sqrt{j_n}}\right)^{-\alpha}\ind_{\Omega_{n,\delta}}\right)
		&=\E \pp{W_n\ind_{\Omega_{n,\delta}}}+\frac{\alpha \E (W_n^2\ind_{\Omega_{n,\delta}})}{\sqrt{j_n}}+ \E \left(O\left(\frac{W_n^3}{j_n}\right)\ind_{\Omega_{n,\delta}}\right)\\
		& = -\esp \pp{W_n\ind_{\Omega_{n,\delta}^c}} + \frac \alpha{\sqrt{j_n}} - \frac\alpha{\sqrt{j_n}}\esp(W_n^2\ind_{\Omega_{n,\delta}^c})+ \E \left(O\left(\frac{W_n^3}{j_n}\right)\ind_{\Omega_{n,\delta}}\right)\\
		&=\frac\alpha{\sqrt{j_n}}(1+o(1)).	\end{align*}
Restricted to the event $\Omega_{n,\delta}^c$, we consider 2 subcases. First,
\begin{align*}
		\E\left|W_n\left(1-\frac{W_n}{\sqrt{j_n}}\right)^{-\alpha}\inddd{G_{j_n}<(1-\delta)j_n}\right|&\leq j_n^{\alpha+1/2}\esp\pp{G_{j_n}^{-\alpha}\inddd{G_{j_n}<(1-\delta)j_n}}
		\\
		& = \frac{j_n^{\alpha+1/2}\Gamma(j_n-\alpha)}{\Gamma(j_n)}\proba\pp{G_{j_n-\alpha}<(1-\delta)j_n},
\end{align*}
Using Chernoff's bound we see that the above expression decays exponentially in $j_n$. Similarly,
	\begin{align*}
		\E\left|W_n\left(1-\frac{W_n}{\sqrt{j_n}}\right)^{-\alpha}\inddd{G_{j_n}>(1+\delta)j_n}\right|&\leq \left(1+\delta\right)^{-\alpha}\E\pp{\left|W_n\right|\1_{G_{j_n}>(1+\delta)j_n}},
	\end{align*}
	and by the Cauchy--Schwarz inequality first and then Chernoff's bound again we see that the above decays exponentially in $j_n$.

Recall the decomposition of $I_{n,j_n}(t)$ in \eqref{eq:decomp I_n}. Therefore,
\equh\label{eq:J_n,1}
\limn j_n^{1/2} J_{n,1}(t)= \alpha t^\alpha,
\eque
and moreover the above asymptotic equivalence is uniform in $t\ge 0$. 
We next examine $J_{n,2}(t)$. Introduce 
\[
Y_n:=\pp{1-\frac{W_n}{j_n^{1/2}}}\inv = \frac{j_n}{G_{j_n}},
\]
and
\[
\Omega_{n,\delta}:=\ccbb{\abs{\frac{W_n}{\sqrt{j_n}}}\le \delta} = \ccbb{\frac1{1+\delta}\le Y_n = \frac{j_n}{G_{j_n}}\le \frac1{1-\delta}}, \delta>0.
\]
So
\equh
|J_{n,2}(t)| 
\le \esp\abs{W_n\pp{\frac{\nu((n/j_n)Y_nt)}{\nu(n/j_n)}-(Y_nt)^\alpha}\ind_{\Omega_{n,\delta}}}
+ \esp\abs{W_n\pp{\frac{\nu((n/j_n)Y_nt)}{\nu(n/j_n)}-(Y_nt)^\alpha}\ind_{\Omega_{n,\delta}^c}}.\label{eq:two cases}
\eque
We take $\delta>0$ small enough so that, on $\Omega_{n,\delta}$, $Y_n t\in[1-\epsilon,1+\epsilon]$ for all 
$t\in[1-\epsilon',1+\epsilon']$. Then, applying \eqref{eq:j_n rate} to the first term on the right-hand side of \eqref{eq:two cases} (this is the only place we need \eqref{eq:j_n rate}) we have
\[
\esp\abs{W_n\pp{\frac{\nu((n/j_n)Y_nt)}{\nu(n/j_n)}-(Y_nt)^\alpha}\ind_{\Omega_{n,\delta}}}\le C\esp|W_n|\rho_{[1-\epsilon,1+\epsilon]}(n/j_n)\le C\rho_{[1-\epsilon,1+\epsilon]}(n/j_n).
\]
For the second term on the right-hand side of \eqref{eq:two cases}, for every $\delta>0$ there exists a constant $C$ possibly depending on $\delta$ such that  (thanks to Potter's bound for $n$ large enough) 
\begin{align*}
\esp\abs{W_n\pp{\frac{\nu((n/j_n)Y_nt)}{\nu(n/j_n)}-(Y_nt)^\alpha}\ind_{\Omega_{n,\delta}^c}} & \le 
C\pp{\esp \pp{W_n^2\pp{|Y_nt|^{2(\alpha+\delta)}+|Y_nt|^{2(\alpha-\delta)}}}}^{1/2}\proba\pp{\Omega_{n,\delta}^c}^{1/2} 
\\
&\le
C\pp{t^{\alpha+\delta}\vee t^{\alpha-\delta}}\proba\pp{\Omega_{n,\delta}^c}^{1/2}.
\end{align*}
(In the inequality  we used the fact that $W_n\weakto\calN(0,1)$,  $Y_n\to 1$ almost surely, and also the corresponding convergence of moments.)
Moreover, $\proba(\Omega_{n,\delta}^c)$ decays to zero exponentially (in $j_n$). In particular, the above is of smaller order than $j_n^{-1/2}$, which is the order of $J_{n,1}(t)$.
That is, 
\[
\sup_{t\in[c,d]}|J_{n,2}(t)|\le C\rho_{[1-\epsilon,1+\epsilon]}(n/j_n) + o(j_n^{-1/2}).
\]
In order to have $J_{n,2}(t) = o(J_{n,1}(t))$, it suffices to impose 
$j_n^{1/2}\rho_{[1-\epsilon,1+\epsilon]} (n/j_n)\to 0$, which also implies immediately that the relation holds locally uniformly in $t$. This completes the proof of \eqref{eq:I_n asymp}. We have also seen that the estimates for $J_{n,1}$ in \eqref{eq:J_n,1} and for $J_{n,2}$ above hold locally uniformly. This completes the proof.
\end{proof}
We next compute the limiting covariance function. This calculation already suggests the limit to be an Ornstein--Uhlenbeck process.
	\begin{proposition}%\label{Prop: Covariance SC}
		Under \eqref{eq:j_n rate} we have
\[%\begin{equation}\label{eq: Cov asym}
\limn\frac1{\sigma_{n,j_n}^2}
\Cov \left(\wt C_{j_n}(s_n(t)), \wt C_{j_n}(s_n(t'))\right) =   e^{-|t-t'|}, \mfa t,t'\in\R.
\]%		\end{equation}
	\end{proposition}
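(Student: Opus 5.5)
Without loss of generality take $t\le t'$ and write $s=s_n(t)$, $s'=s_n(t')$, so that $s'-s=n(t'-t)/j_n$ for $n$ large. Since the urn processes $(\calN_\ell)_{\ell\in\N}$ are independent (conditionally on the frequencies, which are deterministic in Theorem \ref{thm:1}),
\[
\Cov\pp{\wt C_{j_n}(s),\wt C_{j_n}(s')}=\sif\ell1\pp{\proba(\calN_\ell(s)=j_n,\calN_\ell(s')=j_n)-\proba(\calN_\ell(s)=j_n)\proba(\calN_\ell(s')=j_n)}.
\]
By independent increments, $\proba(\calN_\ell(s)=j_n,\calN_\ell(s')=j_n)=\proba(\calN_\ell(s)=j_n)e^{-(s'-s)p_\ell}$. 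The plan is to show that the first (``diagonal'') sum is asymptotically $e^{-(t'-t)}\sigma_{n,j_n}^2$, and that the product sum is $o(\sigma_{n,j_n}^2)$.

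For the diagonal sum I would write it as $\sum_\ell \frac{(sp_\ell)^{j_n}}{j_n!}e^{-s'p_\ell}$. The weights $\frac{(sp_\ell)^{j_n}}{j_n!}e^{-sp_\ell}$ concentrate on $\ell$ with $sp_\ell=j_n(1+O(j_n^{-1/2}))$. Mimicking the proof of \eqref{eq:E wt C}, I would express the sum as an integral against $\d\nu$, integrate by parts, and obtain $\Gamma(j_n+1)^{-1}\Gamma(j_n)\esp\bigl[(\cdots)\nu(s/G)\bigr]$ with an extra factor $e^{-(s'-s)G/s}$ in the Gamma expectation. Concretely, the substitution $z=s/x$ produces
\[
\frac{1}{\Gamma(j_n+1)}\int_0^\infty \frac{\d}{\d z}\Bigl(-z^{j_n}e^{-z(1+(t'-t)/(j_n+t))}\Bigr)\nu\pp{\frac sz}\d z .
\]
Because $z\approx j_n$ on the bulk, the factor $e^{-z(t'-t)/(j_n+t)}$ is $e^{-(t'-t)}(1+O(j_n^{-1/2}))$ there. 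Equivalently, one can change variables $z\mapsto z(1+\beta_n)$ with $\beta_n=(t'-t)/(j_n+t)$: this gives $(1+\beta_n)^{-j_n}$ times the same integral structure as $I_{n,j_n}$ evaluated at $\nu(s(1+\beta_n)/z)$. Then $(1+\beta_n)^{-j_n}\to e^{-(t'-t)}$, and Lemma (the one giving \eqref{eq:I_n asymp} and the uniform version \eqref{eq:EC uniform}), applied at time parameter $(1+t/j_n)(1+\beta_n)\to1$, yields $\sim \sigma_{n,j_n}^2$. This is the cleanest route. The main care point is that the integrand here is $(j_n-z(1+\beta_n))$ rather than $(j_n-z)$ after the change of variables. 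After rescaling it becomes exactly $I_{n,j_n}$ at the shifted time, so the uniform statement \eqref{eq:EC uniform} suffices.

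For the product sum I would bound it by $\max_\ell\proba(\calN_\ell(s')=j_n)\cdot\esp\wt C_{j_n}(s)$. Poisson probabilities satisfy $\max_\lambda \proba({\rm Poi}(\lambda)=j)\le Cj^{-1/2}$, so this is $O(j_n^{-1/2}\sigma_{n,j_n}^2)=o(\sigma_{n,j_n}^2)$ since $j_n\to\infty$. Combining the two parts gives the limit $e^{-|t-t'|}$. The only nontrivial step is the uniform first-moment asymptotics at the shifted time, which is already supplied by the preceding lemma under \eqref{eq:j_n rate}.
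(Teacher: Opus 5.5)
Your proof is correct. For the diagonal term it matches the paper: your change of variables $z\mapsto z(1+\beta_n)$ amounts to the identity
\[
\sum_\ell \frac{(sp_\ell)^{j_n}}{j_n!}e^{-s'p_\ell}=\pp{\frac{s}{s'}}^{j_n}\esp\wt C_{j_n}(s')=\pp{\frac{s}{s'}}^{j_n}\frac{I_{n,j_n}(1+t'/j_n)}{\Gamma(j_n+1)},
\]
after which you use $(s/s')^{j_n}\to e^{-(t'-t)}$ and the uniform first-moment asymptotics, exactly as the paper does.

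Where you differ is the product sum.

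The paper's route: it writes this sum as an integral against $\d\nu$, integrates by parts, and rewrites it through a Gamma variable with parameter $2j_n$. It then re-runs, with details omitted, the concentration analysis used for $I_{n,j_n}$. This gives the exact asymptotics
\[
\frac{\alpha}{2\sqrt\pi}\,j_n^{-3/2}\nu(n/j_n)=\frac{1}{2\sqrt\pi}\,j_n^{-1/2}\sigma_{n,j_n}^2 .
\]

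Your route: you pull out the factor
\[
\max_\ell\proba(\calN_\ell(s')=j_n)\le \frac{e^{-j_n}j_n^{j_n}}{j_n!}\le(2\pi j_n)^{-1/2}
\]
and multiply by $\esp\wt C_{j_n}(s)\sim\sigma_{n,j_n}^2$. This gives the same order $O(j_n^{-1/2}\sigma_{n,j_n}^2)$, with a slightly larger constant since it is only an upper bound.

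What each buys: your argument is one line and fully rigorous given the first-moment lemma. It also avoids justifying the omitted Gamma-variable analysis and the regular-variation rate control that analysis would need. The paper's computation gives the precise constant, but the proposition does not need it. Only $o(\sigma_{n,j_n}^2)$ is required, since both sums are nonnegative and the covariance is their difference.
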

	
	\begin{proof}
Assume $t<t'$. 	Using independence and conditioning we have 
\[
\Cov \left(\wt C_{j_n}(s_n(t)), \wt C_{j_n}(s_n(t'))\right)
 = \sif\ell1\cov\pp{\inddd{\calN_\ell(s_n(t)) = j_n}, \inddd{\calN_\ell(s_n(t')) = j_n}}
%&= \sum_{\ell=1}^\infty \pp{\frac{(s_n(t)p_\ell)^{j_n}e^{-s_n(t)p_\ell}}{j_n!} e^{-(s_n(t')-s_n(t))p_\ell}- \frac{ (s_n(t)p_\ell)^{j_n} e^{-s_n(t)p_\ell}}{j_n!}\frac{(s_n(t')p_\ell)^{j_n}e^{-s_n(t')p_\ell}}{j_n!}}\\
= \Psi_{n,1} - \Psi_{n,2},
\]
with
\begin{align*}
\Psi_{n,1}&:= \sum_{\ell=1}^\infty \frac{(s_n(t)p_\ell)^{j_n}e^{-s_n(t)p_\ell}}{j_n!} e^{-(s_n(t')-s_n(t))p_\ell} = \frac{1}{j_n!}\int_0^\infty e^{-s_n(t')/x} \left(\frac{s_n(t)}{x}\right)^{j_n} \d \nu(x),\\
\Psi_{n,2}&:= \sum_{\ell=1}^\infty \frac{ (s_n(t)p_\ell)^{j_n} e^{-s_n(t)p_\ell}}{j_n!}\frac{(s_n(t')p_\ell)^{j_n}e^{-s_n(t')p_\ell}}{j_n!} \\
&= \frac{1}{(j_n!)^2}\int_0^\infty e^{-(s_n(t)+s_n(t'))/x} \left(\frac{s_n(t)s_n(t')}{x^2}\right)^{j_n}\d \nu(x).
\end{align*}
First,
\begin{align*}
\Psi_{n,1}&:=\frac{s_n(t)^{j_n}}{j_n!}\int_0^\infty \pp{j_n-s_n(t')z}e^{-s_n(t')z}z^{j_n-1}\nu\pp{\frac 1z}\d z \\
& = \frac{(s_n(t)/s_n(t'))^{j_n}}{j_n!}\int_0^\infty \pp{j_n-z}e^{-z}z^{j_n-1}\nu \pp{\frac {s_n(t')}z}\d z = \frac{(s_n(t)/s_n(t'))^{j_n}}{j_n!}I_{n,j_n}(1+t'/j_n)\\
& \sim 
 e^{-(t'-t)}\frac{I_{n,j_n}(1+t'/j_n)}{\Gamma(j_n+1)} \sim \alpha e^{-(t'-t)}\frac1{j_n}\nu\pp{\frac n{j_n}} = e^{-(t'-t)}\sigma_{n,j_n}^2.
\end{align*}
We next show that $\Psi_{n,2} = o(\Psi_{n,1})$. 
		For $\Psi_{n,2}$, we integrate by parts again and use the substitution $z=(s_n(t)+s_n(t'))/x$:
\[
\Psi_{n,2}
= \left(\frac{s_n(t)s_n(t')}{(s_n(t)+s_n(t'))^2}\right)^{j_n}\frac1{\Gamma(j_n+1)^2}\int_0^\infty e^{-z} (2j_n-z)z^{2j_n-1}\nu\pp{\frac{s_n(t)+s_n(t')}z}\d z.
\]
We can write the integral on the right-hand side above as
\[
\Gamma(2j_n)\esp \pp{(2j_n-G_{2j_n})\nu\pp{\frac n{j_n}\frac{2j_n}{G_{2j_n}}\pp{1+\frac{t+t'}{2j_n}}}}.
\]	
Again, $G_{2j_n}$ is a Gamma random variable with parameter $2j_n$ and $G_{2j_n}/(2j_n)$ is concentrated around $1$. We can apply the same analysis as before to $I_{n,j_n}$ and conclude with 
\[
\Psi_{n,2} \sim 
 \left(\frac{s_n(t)s_n(t')}{(s_n(t)+s_n(t'))^2}\right)^{j_n}\frac1{\Gamma(j_n+1)^2} \alpha \Gamma(2j_n)\nu\pp{\frac n{j_n}}\sim\frac\alpha{2\sqrt\pi}\frac1{j_n^{3/2}}\nu\pp{\frac n{j_n}} = o(\Psi_{n,1}).
\]
The details are omitted.
This completes the proof.
	\end{proof}

Now, we are ready to prove the convergence of the Poissonized model stated in Proposition \ref{prop:1}. We write
\[%	\begin{equation}\label{eq: SC Poissonized process}
		Z_n(t):=\wt C_{j_n}\pp{s_n(t)} - \esp \wt C_{j_n}\pp{s_n(t)} \qmwith s_n(t) = n\pp{1+\frac t{j_n}}_+.
\]%	\end{equation}
Recall that  $(\zeta_t)_{t\in\R}$ denotes the Ornstein--Uhlenbeck process with covariance function $e^{-|s-t|}$. We proceed by proving the convergence of finite-dimensional distributions and the tightness, respectively.
\begin{proof}[Proof of convergence of finite-dimensional distributions] 
		Fix $t_1,\dots,t_d$ and $a_1,...,a_d\in \R$. By the Cram\'er--Wold device, it suffices to show the following:
\begin{equation}\label{eq:fdd CLT}
\frac1{\sigma_{n,j_n}}\sum_{i=1}^d a_i Z_n(t_i) \Rightarrow \summ i1d a_i \zeta_{t_i}.
\end{equation}
Note that 
\[
\summ i1d a_iZ_n(t_i) = \sif\ell1\pp{\summ i1d a_i\inddd{\calN_\ell(s_n(t_i)) = j_n} - \esp \pp{\summ i1d a_i\inddd{\calN_\ell(s_n(t_i)) = j_n} }},
\]
which is a summation of independent bounded random variables. Moreover, 
\begin{align*}
\var\left(\sum_{i=1}^d a_i Z_n(t_i)\right) &= \summ i1d\summ j1d a_ia_j\cov\pp{\wt C_{j_n}\pp{s_n(t_i)},\wt C_{j_n}\pp{s_n(t_j)}}\\
& \sim \sigma_{n,j_n}^2\summ i1d\summ j1d a_ia_je^{-|t_i-t_j|},
		\end{align*}
		as $n\to\infty$. By the Lindeberg--Feller central limit theorem, we have thus proved \eqref{eq:fdd CLT}.	\end{proof}

\begin{proof}[Proof of tightness]
We first introduce a decomposition of $\wt C_j(t)-\esp \wt C_j(t)$. 
 Set
\[
H_{\ell,j}(s):=\inddd{\calN_\ell(s-)=j-1} - \inddd{\calN_\ell(s-) = j}. 
\]
Recall that $(\calN_\ell(t))_{t\ge 0}$ is a Poisson process with parameter $p_\ell$, and we let $\what \calN_\ell(t):=\calN_\ell(t)-p_\ell t$ denote the compensated process. 
Then,
	\begin{equation*}
		\wt C_{j}(t)=\sum_{\ell=1}^\infty \int_0^t  H_{\ell,j}(s)\d \calN_\ell(s)=\sum_{\ell=1}^\infty \int_0^t H_{\ell,j}(s)p_\ell \d s+ \sum_{\ell=1}^\infty \int_0^t  H_{\ell,j}(s)\d \what \calN_\ell(s).
	\end{equation*}
Note that the second series is already centered. Thus, with
\[
		B_{j}(s):= \sum_{\ell=1}^\infty H_{\ell,j}(s)p_\ell,
\qmand 		M_{j}(t):= \sum_{\ell=1}^\infty \int_{0}^{t}  H_{\ell,j}(s) \d \what \calN_\ell (s),
\]
we have
\[
\wt C_j(t)-\esp \wt C_j(t) = \int_0^t\pp{B_j(s)-\esp B_j(s)}\d s + M_j(t).
\]
	Using the decomposition above for $\wt C_j(t)$, we have
	\begin{equation*}
		Z_n(t)=\int_{0}^{n(1+t/j_n)}(B_{j_n}(s)-\E B_{j_n}(s))\d s+ M_{j_n}(n(1+t/j_n)).
	\end{equation*}

We shall prove the tightness of $
\sigma_{n,j_n}\inv(Z_n(t))_{t\in[a,b]}$ for all $-\infty<a<b<\infty$. For this purpose we decompose further (assuming $n$ large enough so that $1+a/j_n\ge 0$)
 into a martingale part and a compensator:
\begin{align*}
Z_n(t)& = Z_n(a)+\int_{n(1+a/j_n)}^{n(1+t/j_n)}(B_{j_n}(s)-\esp B_{j_n}(s))\d s+M_{j_n}(n(1+t/j_n))-M_{j_n}(n(1+a/j_n))\\
& =:Z_n(a)+A_n\topp a(t)+M_n\topp a(t), \quad t\in[a,b],
\end{align*}
and we shall prove the tightness of $\sigma_{n,j_n}\inv Z_n(a), \sigma_{n,j_n}\inv(A_n\topp a(t))_{t\in[a,b]}, \sigma_{n,j_n}\inv(M\topp a_n(t))_{t\in[a,b]}$ respectively. 

For notational simplicity, we give the proof for $a=0, b=T$.
(The proof is essentially the same for all $a<b$.) 
 From now on, write
	\begin{equation*}%\label{eq: Prelim OU SDE}
		Z_n(t)=Z_n(0)+A_n(t)+ M'_n(t),
	\end{equation*}
	where 
	\begin{align*}
		M'_n(t)\equiv M\topp0_n(t) &:=M_{j_n}\pp{n(1+ t/{j_n})}-M_{j_n}(n),\\
		A_n(t)\equiv A\topp0_n(t) &:= \int_{n}^{n(1+t/j_n)}(B_{j_n}(s)-\E B_{j_n}(s))\d s.
	\end{align*}
Note that $Z_n(0) = \wt C_{j_n}(n)-\esp \wt C_{j_n}(n)$ and we have shown that $\limn\var(Z_n(0)/\sigma_{n,j_n}) = 1$. So $\sigma_{n,j_n}\inv(Z_n(0))_{n\in\N}$ is a tight sequence.	The proof is completed by establishing the tightness of $\sigma_{n,j_n}\inv M_n'$ and $\sigma_{n,j_n}\inv A_n$ in Lemmas \ref{lem:M_n'} and \ref{lem:A_n} below, respectively.
	\end{proof}
\begin{lemma}\label{lem:M_n'} 
With the notation above,
\[
\frac1{\sigma_{n,j_n}}\pp{M'_n(t)}_{t\ge 0} \weakto (\B_{2t})_{t\ge 0}
\]
in $D[0,\infty)$, where $(\B_t)_{t\ge 0}$ is a standard Brownian motion. 
\end{lemma}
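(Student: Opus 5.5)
We will apply the martingale functional central limit theorem (e.g.\ Ethier--Kurtz, Theorem 7.1.4, or Rebolledo's theorem) to the square-integrable martingale $\sigma_{n,j_n}^{-1}M_n'(t)$, with respect to the filtration generated by $(\calN_\ell)_{\ell\in\N}$. For $t\ge0$ write
\[
M_n'(t)=\sif\ell1\int_n^{s_n(t)}H_{\ell,j_n}(s)\,\d\what\calN_\ell(s).
\]
The processes $\calN_\ell$ are independent Poisson processes, so they have no common jumps. Since $H_{\ell,j}^2=\inddd{\calN_\ell(s-)=j-1}+\inddd{\calN_\ell(s-)=j}$, the predictable quadratic variation is
\[
\aa{M_n'}(t)=\sif\ell1\int_n^{s_n(t)}\pp{\inddd{\calN_\ell(s-)=j_n-1}+\inddd{\calN_\ell(s-)=j_n}}p_\ell\,\d s .
\]
The jumps of $M_n'$ have size at most $1$, so $\sigma_{n,j_n}^{-1}\sup_t|\Delta M_n'(t)|\le\sigma_{n,j_n}^{-1}\to0$, because $\sigma_{n,j_n}\to\infty$. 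It therefore suffices to show that $\sigma_{n,j_n}^{-2}\aa{M_n'}(t)\to 2t$ in probability for each $t\ge0$.

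\textbf{Step 1: the mean of $\aa{M_n'}(t)$.} We have $\esp\sum_\ell p_\ell\inddd{\calN_\ell(s)=j}=\sum_\ell p_\ell (sp_\ell)^je^{-sp_\ell}/j!=\frac{j+1}{s}\esp\wt C_{j+1}(s)$. Hence
\[
\esp\aa{M_n'}(t)=\int_n^{s_n(t)}\frac1s\pp{j_n\esp\wt C_{j_n}(s)+(j_n+1)\esp\wt C_{j_n+1}(s)}\d s .
\]
The interval has length $nt/j_n$ and $s\sim n$ on it. By \eqref{eq:EC uniform}, $\esp\wt C_{j_n}(s)\sim\sigma_{n,j_n}^2$ uniformly on $[n,s_n(t)]$. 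Running the same Gamma-variable argument of the preceding lemma with $j_n+1$ in place of $j_n$, and using the fact that $\nu(n/(j_n+1))\sim\nu(n/j_n)$ by local uniformity of regular variation, we also get $\esp\wt C_{j_n+1}(s)\sim\sigma_{n,j_n}^2$. Combining, $\esp\aa{M_n'}(t)\sim \frac{nt}{j_n}\cdot\frac{2j_n\sigma_{n,j_n}^2}{n}=2t\sigma_{n,j_n}^2$.

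\textbf{Step 2: concentration.} We bound the variance via Minkowski's inequality in $L^2$:
\[
\var\pp{\aa{M_n'}(t)}^{1/2}\le\int_n^{s_n(t)}\var\pp{\sum_\ell p_\ell(\inddd{\calN_\ell(s)=j_n-1}+\inddd{\calN_\ell(s)=j_n})}^{1/2}\d s .
\]
The summands are independent across $\ell$, and $p_\ell^2\le p_\ell\cdot\sup\{p_\ell: \calN_\ell(s)\in\{j_n-1,j_n\}\mbox{ is likely}\}$. Up to exponentially small Gamma tails (handled by Chernoff bounds as in the preceding lemma), the relevant $\ell$ satisfy $p_\ell\approx j_n/n$. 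The variance is then at most
\[
\sum_\ell p_\ell^2\,\proba(\calN_\ell(s)\in\{j_n-1,j_n\})\lesssim \frac{j_n}{n}\cdot\frac{j_n}{n}\sigma_{n,j_n}^2 .
\]
Integrating over an interval of length $nt/j_n$ gives $\var(\aa{M_n'}(t))^{1/2}\lesssim t\,\frac{j_n}{n}\sigma_{n,j_n}\cdot\frac{n}{j_n}=t\sigma_{n,j_n}$, which is $o(\sigma_{n,j_n}^2)$ since $\sigma_{n,j_n}\to\infty$. Chebyshev's inequality then gives $\sigma_{n,j_n}^{-2}\aa{M_n'}(t)\to2t$ in probability.

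The main obstacle is Step 2. Making the heuristic $p_\ell\approx j_n/n$ rigorous needs the identity $\sum_\ell p_\ell^2\frac{(sp_\ell)^j}{j!}e^{-sp_\ell}=\frac{(j+1)(j+2)}{s^2}\esp\wt C_{j+2}(s)$. Combining this identity with the first-moment asymptotics for index $j_n+2$ (again $\sim\sigma^2_{n,j_n}$) gives the bound directly, without any ad hoc tail truncation. With this, the martingale FCLT yields the convergence of $\sigma_{n,j_n}^{-1}M_n'$ to $\B_{2t}$ in $D[0,\infty)$.
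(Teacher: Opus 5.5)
Your proposal is correct and follows the paper's proof essentially step for step. Both use a martingale FCLT (the paper cites Whitt's Theorem 2.1 rather than Ethier--Kurtz or Rebolledo), jumps bounded by $1$ together with $\sigma_{n,j_n}\to\infty$, a continuous bracket whose mean is computed through \eqref{eq:EC uniform} exactly as in your Step 1, and a bound of order $\sigma_{n,j_n}^2$ on the variance of the bracket. The only deviation is in Step 2: the paper bounds $\Var(\aa{M'_n}(t))$ by a double integral of covariances of the integrand at times $r<s$, using the holding-time factor $e^{-p_\ell(s-r)}$, to get $\Var(\aa{M'_n}(t))\le 8t\sigma_{n,j_n}^2$; your Minkowski bound combined with the identity for $\sum_\ell p_\ell^2\frac{(sp_\ell)^{j}}{j!}e^{-sp_\ell}$ (the same device the paper uses for $A_n$ in Lemma \ref{lem:A_n}) gives $O(t^2\sigma_{n,j_n}^2)$, which serves equally well.
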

\begin{proof}
We apply  the functional central limit theorem on martingales  from  \citet[Theorem 2.1]{whitt07proofs}. For all $T>0$ we define
	\begin{equation*}
		J(x,T):=\sup\{|x(t)-x(t-)|: 0<t \leq T\}
	\end{equation*}
Then, notice that for each $n\geq 1$, $M'_n$ is a martingale in $D([0,\infty))$ with respect to the filtration 
		\[
		\calF_t\topp n:=\sigma\ccbb{\calN_\ell(s): 0\le s\leq n\pp{1+\frac t{j_n}}, \ell\in \N},
		\] and $M'_n(0)=0$. 
Then, to prove the desired convergence it suffices to show that for all $T>0$, 
		\begin{equation}\label{eq: jumps of pred var}
			\lim_{n\rightarrow\infty} \frac1{\sigma_{n,j_n}}\E J(\langle M'_n\rangle,T)=0, 
		\end{equation}
		\begin{equation}\label{eq: square jumps}
			\lim_{n\rightarrow\infty}\frac1{\sigma_{n,j_n}^2} \E J(M'_n,T)^2=0,
		\end{equation}
		and 
		\begin{equation}\label{eq: quad var convergence}
\limn \frac1{\sigma_{n,j_n}^2}\langle M'_n \rangle(t) =  2 t \mbox{ in probability} \mfa t\ge 0.
		\end{equation}

The bracket process has no jumps (the definition will be recalled below), and hence
 \eqref{eq: jumps of pred var} follows immediately.
The condition \eqref{eq: square jumps} is also easy to check. It follows from the assumption that $\sigma_{n,j_n}\to\infty$ while $J(M_n',T)\le 1$ almost surely. The latter is because  for all $t\ge 0$, 
$\left|M'_n(t)-M'_n(t-)\right|\le 1$,		almost surely, since $|H_{\ell,j}(s)|\le 1$ for all $s>0$ and at each time $t$ there exists at most one jump from all the Poisson processes involved. 
		
It remains to prove \eqref{eq: quad var convergence}. We compute the bracket process for $M'_n$. We have
		\begin{align*}
			\langle M'_n \rangle (t)&= \sum_{\ell=1}^\infty \int_{n}^{n(1+t/j_n)} p_\ell H_{\ell,j_n}^2(s)\d s\\
			&=\sum_{\ell=1}^\infty \int_{n}^{n(1+t/j_n)} p_\ell \left(\inddd{\calN_\ell(s)=j_n-1}+\inddd{\calN_\ell(s)=j_n}\right)\d s=\int_n^{n(1+t/j_n)}D_n(s) \d s,
		\end{align*}
%\newpage
with
\[
D_n(s) := \sif\ell1p_\ell\pp{\inddd{\calN_\ell(s) = j_n-1}+\inddd{\calN_\ell(s) = j_n}}.
\]
First we shall show that
\equh\label{eq:<M_n> mean}
\limn\frac1{\sigma_{n,j_n}^2}\E \langle M'_n \rangle (t)= 2t, \mfa t\ge 0.
\eque
Indeed,
		\begin{align*}
\int_n^{n(1+t/j_n)} \E D_n(s) \d s&=  \int_{n}^{n(1+t/j_n)}\left(\sum_{\ell=1}^\infty \frac{p_\ell (p_\ell s)^{j_n-1}e^{-p_\ell s}}{(j_n-1)!} + \frac{p_\ell (p_\ell s)^{j_n} e^{-p_\ell s}}{j_n!}\right) \d s\\
&= \int_n^{n(1+t/j_n)}\left( \frac{j_n}{s} \E \wt C_{j_n}(s) + \frac{j_n+1}{s} \E \wt C_{j_n+1}(s)\right) \d s\\
&= \int_{0}^{t}\frac{\E \wt C_{j_n}(n(1+s/j_n))}{1+s/j_n}\d s + \frac{j_n+1}{j_n}\int_{0}^{t}\frac{\E \wt C_{j_n+1}(n(1+s/j_n))}{1+s/j_n}\d s.
		\end{align*}
By \eqref{eq:EC uniform}, 
\[
\limn \frac{\esp \wt C_{j_n}(n(1+s/j_n))}{\sigma_{n,j_n}^2(1+s/j_n)^\alpha} = 1,
\]
uniformly for $s\in[0,t]$, and a similar limit theorem holds for $\esp\wt C_{j_n+1}(n(1+s/j_n))$ (note also that $\sigma_{n,j_n}^2\sim \sigma_{n,j_n+1}^2$).  Therefore 
\begin{equation*}
\limn	\frac1{\sigma_{n,j_n}^2}\int_n^{n(1+t/j_n)} \E D_n(s) \d s= 2t.
\end{equation*}
We have thus proved \eqref{eq:<M_n> mean}.		
		To finish the proof of \eqref{eq: quad var convergence}, it suffices to show that 
\equh\label{eq:var <M_n>}
\limn\frac1{\sigma_{n,j_n}^4}\Var(\langle M'_n\rangle(t))=0, \mfa t>0.
\eque
We start by expanding:
\begin{equation}
\Var(\langle M'_n\rangle(t))=2\int_n^{n(1+t/j_n)}\int_r^{n(1+t/j_n)} \Cov (D_n(r), D_n(s))\d s\d r.\label{eq:var <M_n> 1}
\end{equation}
We shall show that the above is of order at most $\sigma_{n,j_n}^2$.
Let us define 
\[
X_{\ell,j_n}(r)=\inddd{\calN_\ell(r)=j_n-1}+\inddd{\calN_\ell(r)=j_n}.
\] Assume $r\le s$. We compute a bound for $\Cov (D_n(r), D_n(s))$. We start with
\[
\Cov (D_n(r), D_n(s))
=\sum_{\ell=1}^\infty p_\ell^2 \Cov(X_{\ell,j_n}(r),X_{\ell,j_n}(s))\leq \sum_{\ell=1}^\infty p_\ell^2 \E \left(X_{\ell,j_n}(r) X_{\ell,j_n}(s)\right).
\]
Write
\begin{align*}
\esp (X_{\ell,j_n}(r)X_{\ell,j_n}(s))& = \proba\pp{\calN_\ell(r) = j_n-1, \calN_\ell(s)\in\{j_n-1,j_n\}} +\proba\pp{\calN_\ell(r)=\calN_\ell(s) = j_n}\\
 & = \proba(\calN_\ell(r) = j_n-1)e^{-p_\ell(s-r)}\pp{1+p_\ell(s-r)} + \proba(\calN_\ell(r) = j_n)e^{-p_\ell(s-r)}.
\end{align*}
Notice also
\[
\int_r^{n(1+t/j_n)}  p_\ell^2e^{-p_\ell(s-r)}\d s \le \int_0^\infty p_\ell^2e^{-p_\ell s}\d s= p_\ell,
\]
and similarly
\[
\int_r^{n(1+t/j_n)}  p_\ell^2e^{-p_\ell(s-r)}(1+p_\ell(s-r))\d s \le \int_0^\infty p_\ell^2e^{-p_\ell s}(1+p_\ell s)\d s= 2p_\ell.
\]
Then, for the inner integral of \eqref{eq:var <M_n> 1},
\begin{align*}
\int_r^{n(1+t/j_n)} \Cov (D_n(r), D_n(s))\d s &\le 2\sif\ell1 p_\ell\pp{\proba\pp{\calN_\ell(r)=j_n-1}+\proba\pp{\calN_\ell(r) = j_n}} \\
& = 2\pp{\frac{j_n}r \esp\wt C_{j_n}(r) + \frac{j_n+1}r\esp\wt C_{j_n+1}(r)}.
\end{align*}
Thus, \eqref{eq:var <M_n> 1} becomes
\begin{align*}
\Var(\langle M'_n\rangle(t))&=2\int_n^{n(1+t/j_n)}\int_r^{n(1+t/j_n)} \Cov (D_n(r), D_n(s))\d s\d r\\
& \le 4\int_n^{n(1+t/j_n)}\pp{\frac{j_n}r \esp\wt C_{j_n}(r) + \frac{j_n+1}r\esp\wt C_{j_n+1}(r)}\d r \sim 8t \sigma_{n,j_n}^2,
\end{align*}
where the last step follows again from the uniform estimate in \eqref{eq:EC uniform}.
We have proved that $\var(\aa{M'_n}(t))\le C \sigma_{n,j_n}^2$ and hence \eqref{eq:var <M_n>} holds. 
This completes the proof.
	\end{proof}
	\begin{lemma}\label{lem:A_n}For all $T>0$, the process $\sigma_{n,j_n}\inv(A_n(t))_{t\in[0,T]}$ is tight in $C[0,T]$.
	\end{lemma}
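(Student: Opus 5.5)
We will prove tightness of $\sigma_{n,j_n}\inv A_n$ in $C[0,T]$ via a Kolmogorov-type moment criterion on increments. Since $A_n(0)=0$ and $A_n$ is absolutely continuous, it suffices to find $C$ (independent of $n$) with
\[
\frac1{\sigma_{n,j_n}^2}\esp\pp{A_n(t)-A_n(u)}^2\le C(t-u)^2,\quad 0\le u\le t\le T,
\]
for all $n$ large. The exponent $2>1$ then gives tightness in $C[0,T]$ by the Kolmogorov--Chentsov criterion.

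To get this bound, write $\wb B_n(s):=B_{j_n}(s)-\esp B_{j_n}(s)$. By Cauchy--Schwarz,
\[
\esp\pp{A_n(t)-A_n(u)}^2=\int_{s_n(u)}^{s_n(t)}\int_{s_n(u)}^{s_n(t)}\cov(B_{j_n}(r),B_{j_n}(s))\d r\d s\le \pp{s_n(t)-s_n(u)}^2\sup_{s\in[n,s_n(T)]}\var(B_{j_n}(s)).
\]
Here $s_n(t)-s_n(u)=n(t-u)/j_n$. By independence across $\ell$,
\[
\var(B_{j_n}(s))\le\sif\ell1p_\ell^2\pp{\proba(\calN_\ell(s)=j_n-1)+\proba(\calN_\ell(s)=j_n)}.
\]
The identity already used in the proof of Lemma \ref{lem:M_n'} gives $p_\ell\proba(\calN_\ell(s)=j-1)=(j/s)\proba(\calN_\ell(s)=j)$. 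Applying it twice,
\[
p_\ell^2\proba(\calN_\ell(s)=j_n-1)=\frac{j_n(j_n+1)}{s^2}\proba(\calN_\ell(s)=j_n+1),
\]
and similarly $p_\ell^2\proba(\calN_\ell(s)=j_n)=\frac{(j_n+1)(j_n+2)}{s^2}\proba(\calN_\ell(s)=j_n+2)$. Hence
\[
\var(B_{j_n}(s))\le \frac{C j_n^2}{s^2}\pp{\esp\wt C_{j_n+1}(s)+\esp\wt C_{j_n+2}(s)}.
\]

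By \eqref{eq:EC uniform}, applied to $j_n+1$ and $j_n+2$ (the assumptions are unaffected by shifting $j_n$ by a constant, and $\sigma^2_{n,j_n+k}\sim\sigma^2_{n,j_n}$), we have uniformly for $s\in[n,s_n(T)]$
\[
\var(B_{j_n}(s))\le C\frac{j_n^2}{n^2}\sigma_{n,j_n}^2.
\]
Combining,
\[
\esp(A_n(t)-A_n(u))^2\le \frac{n^2(t-u)^2}{j_n^2}\cdot C\frac{j_n^2}{n^2}\sigma_{n,j_n}^2=C(t-u)^2\sigma_{n,j_n}^2,
\]
which is exactly the required bound.

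The main obstacle is whether this crude estimate, which discards the covariance decay between $r$ and $s$, is sharp enough. It is: the time window has length $n/j_n$, while $B_{j_n}$ has standard deviation of order $(j_n/n)\sigma_{n,j_n}$, so the factors cancel exactly. The only care needed is in the uniform first-moment asymptotics for the shifted indices $j_n+1$ and $j_n+2$ over the shrinking window, which follow from the preceding lemma.
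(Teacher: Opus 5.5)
Your proof is correct and rests on the same key estimate as the paper. Both bound $\var(B_{j_n}(s))\le C(j_n/n)^2\bigl(\esp\wt C_{j_n+1}(s)+\esp\wt C_{j_n+2}(s)\bigr)\le C(j_n/n)^2\sigma_{n,j_n}^2$ via \eqref{eq:EC uniform}, and then use that the time window has length $n(t-u)/j_n$. The only difference is packaging: you conclude with the Kolmogorov--Chentsov moment criterion, whereas the paper uses Billingsley's Theorems 7.3--7.4, bounding the supremum over each $\delta$-block by $\int|B_{j_n}-\esp B_{j_n}|$ and applying Chebyshev and Cauchy--Schwarz, which gives the same $C\delta^2/\epsilon^2$ per block.
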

	\begin{proof}
We shall apply  \citet[Theorem 7.3]{billingsley99convergence}. Notice that  $A_n(0) = 0$. 	Therefore, it suffices to show
\equh\label{eq:A tightness}
\lim_{\delta\downarrow 0}\limsup_{n\to\infty}\P\left(\frac1{\sigma_{n,j_n}} \sup_{u,v\in[0,T], |u-v|<\delta}\left| A_n(u)-A_n(v)\right|\geq \epsilon\right) = 0, \mfa \epsilon>0.	
\eque
We start by using the bound given in \citet[Theorem 7.4]{billingsley99convergence}. Suppose that $\{t_i\}_{i=1,...,m}$ is such that $t_0=0, t_m=T$ and $t_i-t_{i-1}=\delta$ with $m=\lceil T/\delta\rceil$. For all $\epsilon>0$, we have
\begin{multline*}%\label{eq: A_n tightness}
\P\left(\frac1{\sigma_{n,j_n}} \sup_{u,v\in[0,T],|u-v|<\delta}\left| A_n(u)-A_n(v)\right|\geq 3\epsilon\right)\\
\leq \sum_{i=1}^m \P\left(\frac1{\sigma_{n,j_n}}\sup_{t_{i-1}\leq u\leq t_i} \left|A_n(u)-A_n(t_{i-1})\right|\geq \epsilon \right).
		\end{multline*}
For each probability on the right-hand side above, we start by applying the Markov inequality
		\begin{align*}
		\P&\left(\frac1{\sigma_{n,j_n}}\sup_{t_{i-1}\leq u\leq t_i} \left|A_n(u)-A_n(t_{i-1})\right|\geq \epsilon \right)\\
		&\leq \frac{1}{\epsilon^2 \sigma_{n,j_n}^2} \E \left( \sup_{t_{i-1}
		\leq u\leq t_i} \left| \int_{n(1+t_{i-1}/j_n)}^{n(1+u/j_n)} \left( B_{j_n}(t) -\E B_{j_n}(t)\right) \d t\right|^2\right)\\
			&\leq \frac{1}{\epsilon^2 \sigma_{n,j_n}^2} \E \left( \int_{n(1+t_{i-1}/j_n)}^{n(1+t_i/j_n)} |B_{j_n}(t)-\E B_{j_n}(t)|\d t\right)^2\leq \frac{1}{\epsilon^2 \sigma_{n,j_n}^2}\frac{n\delta}{j_n} \int_{n(1+t_{i-1}/j_n)}^{n(1+t_i/j_n)} \Var \left(B_{j_n}(t)\right) \d t,
		\end{align*}
		where in the third inequality we used the Cauchy--Schwarz inequality. Next, by independence we have
		\begin{align*}
			\Var\left(B_{j_n}(t)\right)&=\sum_{\ell=1}^\infty p_\ell^2 \Var\pp{\inddd{\calN_\ell(t)=j_n-1}-\inddd{\calN_\ell(t)=j_n}}\\
			&\le\sum_{\ell=1}^\infty p_\ell^2 \pp{\proba\pp{\calN_\ell(t)=j_n-1}+\proba\pp{\calN_\ell(t)=j_n}}\\
			&=\frac{j_n(j_n+1)}{t^2}\E \wt C_{j_n+1}(t)+ \frac{(j_n+1)(j_n+2)}{t^2} \E \wt C_{j_n+2}(t).
%			&\leq \frac{(j_n+2)^2}{t^2}\left(\E \wt C_{j_n+1}(t)+\E \wt C_{j_n+2}(t)\right).
		\end{align*}
Then, 
		\begin{align*}
			\frac{n \delta}{\epsilon^2 \sigma_{n,j_n}^2 j_n} \int_{n(1+t_{i-1}/j_n)}^{n(1+t_i/j_n)} \Var \left(B_{j_n}(t)\right) \d t&\leq \frac{n \delta (j_n+2)^2}{\epsilon^2 \sigma_{n,j_n}^2 j_n} \int_{s_n(t_{i-1})}^{s_n(t_i)} \frac{1}{t^2} \left(\E \wt C_{j_n+1}(t)+ \E \wt C_{j_n+2}(t)\right)\d t\\
			&=\frac{  \delta (j_n+2)^2}{\epsilon^2 \sigma_{n,j_n}^2 j_n^2}\int_{t_{i-1}}^{t_i} \frac{\E \wt C_{j_n+1}(s_n(t))+\E \wt C_{j_n+2}(s_n(t))}{(1+t/j_n)^2}\d t
\\
&			\le C \frac{\delta^2}{\epsilon^2},
\end{align*}
for some constant $C$ that does not depend on $\delta$ nor $\epsilon$ by \eqref{eq:EC uniform} again. Thus, 
\begin{equation*}
\limsup_{n\to\infty}\P\left(\frac1{\sigma_{n,j_n}} \sup_{u,v\in[0,T],|u-v|<\delta}\left| A_n(u)-A_n(v)\right|\geq 3\epsilon\right)\leq \frac{C m \delta^2}{\epsilon^2},  
		\end{equation*}
		which tends to zero as $\delta\downarrow 0$ (recall $m = \ceil{T/\delta}$). We have thus proved \eqref{eq:A tightness}.
	\end{proof}
%	\newpage
	\subsection{De-Poissonization}
	It remains to show that the approximation error between the Poissonized model and the original model is negligible. 
	\begin{proof}[Proof of Theorem \ref{thm:1}]
To simplify the notation, introduce
\[
\wt W_n(t) := \wt C_{j_n}(s_n(t)) \qmand 
W_n(t) :=C_{j_n}(\floor{s_n(t)}), \quad t\in\R, 
\]
and recall $s_n(t) = n(1+t/j_n)_+$. Let $(\tau_n)_{n\in\N}$ be the consecutive arrival times of the Poisson process $\calN$ for the Poissonized model. 
It is well known that one can couple the Poissonized model and the original Karlin model (i.e., define the two on the same probability space) such that for every $K>0$
\[
\pp{ C_{j_n}(\floor{s_n(t)})}_{t\in[-K,K]} = \pp{\wt C_{j_n}(\tau_{\floor{s_n(t)}})}_{t\in[-K,K]}, \mbox{ almost surely.}
\]
For each $t\in\R$, introduce a random variable $\what t_n$ determined by 
\[%\equh\label{eq:tau_s}
\tau_{\floor{s_n(t)}} = n\pp{1+\frac{\what t_n}{j_n}} =s_n(\what t_n). 
\]%\eque
From now on, assume $n$ large enough such that $s_n(-K)>0$. 
With a little abuse of notation, $\what t_n$ is a random variable depending on $t$.
Thus, 
\[
\pp{W_n(t)}_{t\in[-K,K]} = \pp{\wt W_n(\what t_n)}_{t\in[-K,K]},\quad \mbox{ almost surely.}
\]
Write 
\begin{align*}
\wt m_n(t) &:= \esp \wt W_n(t) = \esp \wt C_{j_n}(s_n(t)),\\
m_n(t) &:= \esp W_n(t) = \esp C_{j_n}(\floor{s_n(t)}).
\end{align*} 
In the notation above, the goal becomes to prove for all $K>0$,
\equh\label{eq:0}
\frac1{\sigma_{n,j_n}}\pp{\wt W_n(\what t_n) -  m_n(t)}_{t\in[-K,K]} \weakto (\zeta(t))_{t\in[-K,K]},
\eque
in $D[-K,K]$ as $n\to\infty$. 
The proof consists of two parts. We shall first prove that for all $K>0$,
\equh\label{eq:1}
\frac1{\sigma_{n,j_n}}\pp{\wt W_n(\what t_n) - \wt m_n(\what t_n)}_{t\in[-K,K]} \weakto (\zeta(t))_{t\in[-K,K]},
\eque
in $D[-K,K]$, and second
\equh\label{eq:2}
\limn\frac1{\sigma_{n,j_n}}\sup_{t\in[-K,K]}\abs{m_n(t)-\wt m_n(\what t_n)}=0 \mbox{ almost surely}.
\eque
The claim  \eqref{eq:0} then follows. \medskip

(i) We first prove \eqref{eq:1}. 
We have shown that 
\equh\label{eq:JG}
\frac1{\sigma_{n,j_n}}\pp{\wt W_n(t) - \wt m_n(t)}_{t\in\R}\weakto (\zeta(t))_{t\in\R},
\eque
as $n\to\infty$. Recall that it is assumed that $j_n\to\infty$, and hence $s_n(t)\sim n$. 
By the law of iterated logarithm, for $n$ large enough we have $|\tau_n-n|\le 2\sqrt{2n\log\log n}$. Since $\floor{s_n(t)} = n(1+o(1))$ uniformly over $t\in[-K,K]$, it follows that 
\[
\sup_{t\in[-K,K]}\abs{\tau_{\floor{s_n(t)}}-s_n(t)}\le C\sqrt{n\log\log n}
\]
for all $n$ large enough, almost surely. 
Then, writing
\[
\tau_{\floor{s_n(t)}} -s_n(t) =  n\pp{\frac{\what t_n-t}{j_n}},
\]
we have, for all $K>0$ fixed, 
\equh\label{eq:what t_n}
\sup_{t\in[-K,K]}\abs{\what t_n-t}\le C\frac{j_n}{\sqrt n}\sqrt{\log\log n}\quad \mbox{ for $n$ large enough, almost surely.}
\eque
We are in the regime that $j_n\ll n^{\alpha/(\alpha+1)}\ll n^{1/2}$. Therefore, the right-hand side above goes to zero almost surely, and hence \eqref{eq:1} now follows from applying Lemma \ref{lem:time change} to \eqref{eq:JG}. (Strictly speaking, viewing $\what t_n$ as a process of $t$, we need $\what{(-K)}_n\ge -K-\epsilon$, $\what K_n\le K+\epsilon$ for all $n\in\N$ to apply the time change lemma. For this purpose one may simply proceed by restricting to the above event, the probability of which goes to one.)
\medskip	

(ii) We now prove \eqref{eq:2}. We first show that
\equh\label{eq:2''}
\limn\frac1{\sigma_{n,j_n}}\sup_{t\in[-K,K]}\abs{m_n(t)-\wt m_n(t)}=0.
\eque
For convenience, introduce
\[
\wt m_{n,j}(u):=\esp\wt C_j(u) = \sif\ell1 \frac{(p_\ell u)^j}{j!}e^{-p_\ell u},\qquad u>0.
\]
Note that $\wt m_n(t) = \wt m_{n,j_n}(s_n(t))$. For each fixed $r\in\{0,1,2\}$, we have $\sigma_{n,j_n+r}\sim\sigma_{n,j_n}$. Hence, applying \eqref{eq:EC uniform} with $j_n+r$ in place of $j_n$, we obtain
\begin{equation}\label{eq:mean bound dP}
\sup_{u\in[s_n(-K-1),s_n(K+1)]}\wt m_{n,j_n+r}(u)\leq C\sigma_{n,j_n}^2,\qquad r\in\{0,1,2\},
\end{equation}
for all sufficiently large $n$.

We first compare the binomial and Poissonized means. We use the following fact: there exists a constant $C>0$ such that, for all $m\geq1$, $j=0,1,\dots,m$, and $p\in[0,1]$,
\begin{align}
\left|\binom{m}{j}p^j(1-p)^{m-j}-\frac{(mp)^je^{-mp}}{j!}\right|
&\leq\frac{C(mp)^je^{-mp}}{j!}\max\ccbb{mp^2,\frac{j^2}{m}}\nonumber\\
&\leq\frac{C(j+2)^2}{m}\max\ccbb{\frac{(mp)^{j+2}e^{-mp}}{(j+2)!},\frac{(mp)^je^{-mp}}{j!}}.
\label{eq:binomial-Poisson-local}
\end{align}
The first inequality is discussed in \citet[Eq.~(1.1)]{barbour92poisson}, and the second is a direct rewrite. Applying \eqref{eq:binomial-Poisson-local} with $m=\floor{s_n(t)}$, and noting that $m\sim n$ uniformly for $t\in[-K,K]$, we have
\[%\equh\label{eq:binomial poisson mean}
\abs{m_n(t)-\wt m_{n,j_n}(\floor{s_n(t)})}
\leq C\frac{j_n^2}{n}\left(\wt m_{n,j_n}(\floor{s_n(t)})+\wt m_{n,j_n+2}(\floor{s_n(t)})\right).
\]%\eque
Consequently, by \eqref{eq:mean bound dP},
\[
\sup_{t\in[-K,K]}\frac{\sabs{m_n(t)-\wt m_{n,j_n}(\floor{s_n(t)})}}{\sigma_{n,j_n}}\leq C\frac{j_n^2\sigma_{n,j_n}}{n}.
\]
It remains to compare $\wt m_{n,j_n}(\floor{s_n(t)})$ and $\wt m_{n,j_n}(s_n(t))$. Differentiating term by term gives
\[
\frac{\partial}{\partial u}\wt m_{n,j}(u)=\frac{j}{u}\wt m_{n,j}(u)-\frac{j+1}{u}\wt m_{n,j+1}(u).
\]
Therefore, by \eqref{eq:mean bound dP},
\[
\sup_{u\in[s_n(-K-1),s_n(K+1)]}\abs{\frac{\partial}{\partial u}\wt m_{n,j_n}(u)}\leq C\frac{j_n}{n}\sigma_{n,j_n}^2
\]
for all sufficiently large $n$. Since $\abs{\floor{s_n(t)}-s_n(t)}<1$, it follows that
\[
\sup_{t\in[-K,K]}\frac{\sabs{\wt m_{n,j_n}(\floor{s_n(t)})-\wt m_n(t)}}{\sigma_{n,j_n}}\leq C\frac{j_n\sigma_{n,j_n}}{n}.
\]
Combining the last two bounds yields
\[
\frac1{\sigma_{n,j_n}}\sup_{t\in[-K,K]}\abs{m_n(t)-\wt m_n(t)}\leq C\left(\frac{j_n^2\sigma_{n,j_n}}{n}+\frac{j_n\sigma_{n,j_n}}{n}\right)\to0.
\]
This proves \eqref{eq:2''}. 

We next show that
\equh\label{eq:2'}
\limn\frac1{\sigma_{n,j_n}}\sup_{t\in[-K,K]}\abs{\wt m_n(t)-\wt m_n(\what t_n)}=0
\qquad\mbox{almost surely}.
\eque
By \eqref{eq:what t_n}, almost surely, for all sufficiently large $n$, we have $\what t_n\in[-K-1,K+1]$ for every $t\in[-K,K]$. Hence, by the mean-value theorem and \eqref{eq:mean bound dP},
\begin{align*}
\sup_{t\in[-K,K]}\abs{\wt m_n(t)-\wt m_n(\what t_n)}
&\leq C\frac{j_n}{n}\sigma_{n,j_n}^2\sup_{t\in[-K,K]}\abs{s_n(t)-s_n(\what t_n)}\\
&=C\sigma_{n,j_n}^2\sup_{t\in[-K,K]}\abs{t-\what t_n}\leq C\sigma_{n,j_n}^2\frac{j_n}{\sqrt n}\sqrt{\log\log n}
\end{align*}
eventually almost surely. Therefore,
\[
\frac1{\sigma_{n,j_n}}\sup_{t\in[-K,K]}\abs{\wt m_n(t)-\wt m_n(\what t_n)}\leq C\frac{j_n\sigma_{n,j_n}}{\sqrt n}\sqrt{\log\log n}\to0
\]
almost surely. Combining \eqref{eq:2''} and \eqref{eq:2'} proves \eqref{eq:2}.
\end{proof}
\subsection{Proof of Theorem \ref{thm:CRP}}\label{sec:0 process}
Theorem \ref{thm:CRP} concerning $(\alpha,\theta)$-partitions now follows as a corollary of Theorem \ref{thm:1} and Proposition \ref{prop:0 process}. We first check that the conditions imposed in Theorem \ref{thm:1} are satisfied under $j_n\to\infty$ and
\equh\label{eq:j_n CRP}
j_n = o\pp{\frac{n^{\alpha/(1+\alpha)}}{(\log\log n)^{1/(1+\alpha)}}}.
\eque
Indeed, for $(\alpha,\theta)$-partitions we have that for all $K\in(0,\infty)$ there exists $C>0$ such that
\equh\label{eq:assump nu}
\rho_{[0,K]}(n)  = \sup_{x\in[0,K]}\abs{\frac{\nu(nx)}{\nu(n)}-x^\alpha} \le Cn^{-\alpha/2}\sqrt{\log\log n}, \mbox{ for all $n$ large enough}.
\eque
By applying the representation in Lemma \ref{lem:N(D)} it suffices to check that $\nu(x) = N(Dx^\alpha)$ satisfies \eqref{eq:assump nu}. This is an almost sure statement. Therefore by part (ii) of Lemma \ref{lem:P_j}, it suffices to show the above with $\theta=0$, in which case $N$ is a standard Poisson process. Indeed, we have
\[
\frac{\nu(nx)}{\nu(n)}-x^\alpha = \frac{N(D(nx)^\alpha) - D(nx)^\alpha}{N(Dn^\alpha)} + \pp{\frac{Dn^\alpha}{N(Dn^\alpha)}-1}x^\alpha. 
\]
By the law of iterated logarithm, 
\[
\limsupn \frac{|N(n)-n|}{\sqrt{2n\log\log n}} = 1,
\]
whence \eqref{eq:assump nu} holds almost surely.

In summary, \eqref{eq:j_n CRP} and \eqref{eq:assump nu} combined yield \eqref{eq:j_n rate} in Theorem \ref{thm:1}, which now says
\[
\pp{\frac{1}{\sigma_{n,j_n} }\pp{C_{j_n}\pp{\floor{s_n(t)}}-\E \pp{C_{j_n}\pp{\floor{s_n(t)}}\mid \calP}}}_{t\in \R} \aswto (\zeta(t))_{t\in \R},
\]
with respect to $\calP$ as $n\to\infty$. Recall that 
\[
\sigma_{n,j_n} = \pp{\frac\alpha{j_n}N(D(n/j_n)^\alpha)}^{1/2}\sim \pp{\frac\alpha{\Gamma(1-\alpha)}S_\alpha\frac{n^\alpha}{j_n^{1+\alpha}}}^{1/2},
\]
almost surely. So, the above convergence is very close to the claimed convergence in Theorem \ref{thm:CRP}, except that the centering is different; but applying Proposition \ref{prop:0 process} concludes the proof. 

It remains to prove Proposition \ref{prop:0 process}.

\begin{proof}[Proof of Proposition \ref{prop:0 process}]
By absolute continuity of
\(\operatorname{PD}(\alpha,\theta)\) with respect to
\(\operatorname{PD}(\alpha,0)\), it suffices to work with
\(\theta=0\).  
%Thus
%\[
%P_\ell^\downarrow=D^{1/\alpha}\Gamma_\ell^{-1/\alpha},
%\qquad
%D=\frac{S_\alpha}{\Gamma(1-\alpha)},
%\]
%where \((\Gamma_\ell)_{\ell\in\N}\) are the consecutive arrival times of a standard Poisson process \(N\).  

We first prove both statements for the corresponding Poissonized model. 
That is, with 
\[
\Delta_n(t):=
\esp\pp{\wt C_{j_n}(s_n(t))\mmid\calP}
-s_n(t)^\alpha p_{j_n}\topp\alpha S_\alpha,
\]
we shall prove
\begin{equation}\label{eq:random centering}
\limn\sup_{t\in[-T,T]}\frac{\abs{\Delta_n(t)}}{\sigma_{n,j_n}}
=0
\end{equation}
in probability, and almost surely under
\eqref{eq:lower growth combined}.

{\em We first prove the convergence in probability in \eqref{eq:random centering}.}
Write
\[
V_n(s):=
\frac{\wb N(D(n/j_n)^\alpha s^\alpha)}{(n/j_n)^{\alpha/2}},
\qmwith \wb N(x):=N(x)-x.
\]
Let \(G_{j}\) be a Gamma random variable with parameter \(j\), independent
of \(N\), and write
\[
W_n:=\frac{j_n-G_{j_n}}{\sqrt{j_n}},
\qmand R_n(t):=\frac{j_n}{G_{j_n}}\pp{1+\frac{t}{j_n}}.
\]
Introduce also
\[
r_n:=j_n^{-1/2}\pp{\frac{n}{j_n}}^{\alpha/2}.
\]
Notice that
\begin{equation}\label{eq:sigma rn combined}
\frac{\sigma_{n,j_n}^2}{r_n^2}
=\alpha
\frac{N(D(n/j_n)^\alpha)}{(n/j_n)^\alpha}
\to\alpha D
\qquad\mbox{almost surely}.
\end{equation}
With
\(\esp_N(\cdot):=\esp(\cdot\mid(N(t))_{t\geq0})\), integration by parts
gives
\begin{align}
\frac{\Delta_n(t)}{r_n}
&=
\frac{\sqrt{j_n}}{\Gamma(j_n+1)}
\int_0^\infty
e^{-z}(j_n-z)z^{j_n-1}
\frac{\wb N(D(s_n(t)/z)^\alpha)}{(n/j_n)^{\alpha/2}}
\d z \nonumber\\
&=\esp_N\pp{W_nV_n(R_n(t))}
=\esp_N\pp{W_n\pp{V_n(R_n(t))-V_n(1)}},
\label{eq:WV simplified}
\end{align}
where the last equality uses \(\esp W_n=0\).
The functional convergence in \citep[Lemma 3.4]{wang26central}, which already takes into account
the dependence between \(D\) and \(N\), yields
\begin{equation}\label{eq:Vn fclt}
\pp{V_n(t)}_{t\in[0,2]}
\weakto
\pp{\B_{Dt^\alpha}}_{t\in[0,2]} 
\end{equation}
in $D[0,2]$ as $n\to\infty$. 
Fix \(M>0\).  On \(\{\abs{W_n}\leq M\}\), for all sufficiently large
\(n\)
\[
\sup_{t\in[-T,T]}\abs{R_n(t)-1}
\leq
2\pp{\frac{M}{\sqrt{j_n}}+\frac{T}{j_n}}
=:\delta_{n,M}.
\]
It follows from \eqref{eq:Vn fclt} that $\sup_{t\in[-T,T]}|V_n(R_n(t))-V_n(1)|\to 0$ in probability, and hence
\begin{equation}\label{eq:le M}
\limn\sup_{t\in[-T,T]}
\abs{
\esp_N\pp{
W_n\pp{V_n(R_n(t))-V_n(1)}
\inddd{\abs{W_n}\leq M}
}}=0
\end{equation}
in probability for every fixed \(M\).

It remains to control 
\[
\Delta_{n,M}^*:=
\sup_{t\in[-T,T]}
\esp_N\pp{
\abs{W_n}
\abs{V_n(R_n(t))-V_n(1)}
\inddd{\abs{W_n}>M}
}.
\]
For fixed \(x>0\), on \(\{D\leq x\}\) and for all sufficiently large
\(n\),
\[
R_n(t)\leq\frac{2j_n}{G_{j_n}},
\qquad t\in[-T,T].
\]
Therefore, we have
\begin{align*}
\inddd{D\le x}&\Delta_{n,M}^*
 \le \esp_N\pp{\inddd{D\le x}\sup_{t\in[-T,T]}\pp{|W_n||V_n(R_n(t))-V_n(1)|}\inddd{|W_n|>M}}\\
& \le \esp_N\pp{\inddd{D\le x}|W_n|\inddd{|W_n|>M}\pp{\sup_{t\in[0,2j_n/G_{j_n}]}\frac{|\wb N(D(n/j_n)^\alpha t^\alpha)|}{(n/j_n)^{\alpha/2}}+\frac{|\wb N(D(n/j_n)^\alpha)|}{(n/j_n)^{\alpha/2}}}}\\
& \le \esp_N\pp{|W_n|\inddd{|W_n|>M}\pp{\sup_{t\in[0,2j_n/G_{j_n}]}\frac{|\wb N(x(n/j_n)^\alpha t^\alpha)|}{(n/j_n)^{\alpha/2}}+\sup_{t\in[0,x]}\frac{|\wb N((n/j_n)^\alpha t)|}{(n/j_n)^{\alpha/2}}}}.
\end{align*}
Now, taking the expectation of the upper bound above, and then computing the conditional expectation with respect to $G_{j_n}$ first, we have
\begin{align*}
\nonumber\esp&\pp{\inddd{D\leq x}\Delta_{n,M}^*}\\
&\nonumber \le \esp\pp{|W_n|\inddd{|W_n|>M}\esp\pp{\sup_{t\in[0,2j_n/G_{j_n}]}\frac{|\wb N(x(n/j_n)^\alpha t^\alpha)|}{(n/j_n)^{\alpha/2}}+\sup_{t\in[0,x]}\frac{|\wb N((n/j_n)^\alpha t)|}{(n/j_n)^{\alpha/2}}\mmid G_{j_n}}}\\
&\le\frac{2x^{1/2}}M
\esp\pp{W_n^2\pp{\pp{\frac{2j_n}{G_{j_n}}}^{\alpha/2}+1}}
\leq\frac C{M},
\nonumber%\label{eq:tail Doob}
\end{align*}
where in the second inequality we applied Doob's maximal inequality $\esp(\sup_{x\in[0,t]}|\wb N(x)|)\le 2\sqrt t$ for all $t>0$, and the constant $C$ at the end depends on $x$ but not $n$.
Consequently, using $\proba(\Delta_{n,M}^*>\epsilon)\le\proba(\Delta_{n,M}^*>\epsilon, D\le x)+\proba(D>x)$, and by taking $x$ arbitrarily large, we have
\[
\lim_{M\to\infty}\limsup_{n\to\infty}
\proba\pp{\Delta_{n,M}^*>\epsilon}=0,
\qquad \mbox{ for all }\epsilon>0.
\]
  Combining this with \eqref{eq:sigma rn combined}, \eqref{eq:WV simplified}, and \eqref{eq:le M} 
 proves
\eqref{eq:random centering} in probability.

\smallskip
\noindent
{\em We now prove \eqref{eq:random centering} under the additional assumption \eqref{eq:lower growth combined}.}  Define
\begin{align*}
q_{j}(z)&:=\proba\pp{N(z) = j} = e^{-z}\frac{z^{j}}{j!},\\
F_{n,d}(y)&:=q_{j_n}\pp{(d/y)^{1/\alpha}n},\\
H_n(d)&:=\int_0^\infty F_{n,d}(y)\pp{N(\d y)-\d y}.
\end{align*}
In this way we have
\[%\begin{equation}\label{eq:H representation combined}
\Delta_n(t)
=H_n\pp{D\pp{1+\frac{t}{j_n}}^\alpha}.
\]%\end{equation}
We shall prove that, for every \(0<a<b<\infty\),
\begin{equation}\label{eq:uniform H}
\limn\sup_{d\in[a,b]}\frac{\abs{H_n(d)}}{r_n}
=0,
\quad\mbox{almost surely}.
\end{equation}
This implies the claimed \eqref{eq:random centering}. Indeed, consider $[a,b] = [1/m, m]$ for each $m\in\N$. For all such intervals we have the above almost sure convergence. At the same time, there exists $m\in\N$ such that $[D(1+(-T)/j_n)^\alpha,D(1+T/j_n)^\alpha]\subset[1/m,m]$ and hence $\sup_{t\in[-T,T]}|\Delta_n(t)|\le \sup_{d\in[1/m,m]}|H_n(d)|$ for all $n$ large enough almost surely. 

Now we prove \eqref{eq:uniform H}. For this purpose we consider a grid $\calG_n:=n^{-4}\N\cap[a,b]$ in $[a,b]$ of mesh $n^{-4}$, and we shall show
\equh\label{eq:max over grid}
\max_{d\in\mathcal G_n}\abs{H_n(d)}=o(r_n)
\qquad\mbox{almost surely},
\eque
and
\equh\label{eq:max osci}
\max_{d\in\calG_n}\sup_{d'\in[a,b], |d-d'|\le n^{-4}}\abs{H_n(d)-H_n(d')} = o(r_n), \mbox{ almost surely.}
\eque

We start by proving \eqref{eq:max over grid}. We first show that for all $\epsilon>0$, there exists a constant $c_\epsilon>0$ such that
\[
\proba\pp{
\max_{d\in\mathcal G_n}\abs{H_n(d)}>\epsilon r_n
}
\leq Cn^4e^{-c_\epsilon\sqrt{j_n}}, n\in\N.
\]
The right-hand side is summable by
\eqref{eq:lower growth combined}.  The Borel--Cantelli Lemma then yields \eqref{eq:max over grid}.
In order to establish this maximal inequality, we first notice that 
$q_{j}(z) = \proba(N(z) = j)$ attains its maximum at $z=j$. Stirling's formula gives
\[
\sup_{y>0}F_{n,d}(y)=q_{j_n}(j_n)
\leq Cj_n^{-1/2}, \mfa n\in\N,
\]
and the constant $C$ does not depend on $d$. Therefore, 
\[%\equh\label{eq:F sup combined}
\max_{d\in[a,b]}\sup_{y>0}F_{n,d}(y)
\leq Cj_n^{-1/2}.
\]%\eque
Moreover, a straightforward calculation gives
\[%\equh
\int_0^\infty F_{n,d}(y)^2\d y
=
\alpha d n^\alpha
\frac{\Gamma(2j_n-\alpha)}
{2^{2j_n-\alpha}\Gamma(j_n+1)^2}
\leq Cr_n^2j_n^{-1/2}.
%\label{eq:F variance combined}
\]%\eque
Recall the Bernstein inequality for compensated Poisson integrals \citep[Proposition 7]{reynaud-bouret03adaptive}
\[
\proba\pp{\abs{\int f(\d N(t)-\d t)}>\eta}\le 2\exp\pp{-\frac{\eta^2}{2\int f^2\d x+(2/3)\eta\nn f_\infty}}.
\]
(Note that their inequality concerns Poisson integrals with respect to an inhomogeneous Poisson random measure of which the intensity measure is finite over $\R$. But here we can apply their results by an approximation argument to obtain the above.) Then, it follows that
for every fixed \(\epsilon>0\), one can find a constant $c_\epsilon>0$ such that 
\[%\begin{equation}\label{eq:H tail combined}
\sup_{d\in[a,b]}
\proba\pp{\abs{H_n(d)}>\epsilon r_n}
\leq2\exp\pp{- C\frac{(\epsilon r_n)^2}{r_n^2j_n^{-1/2}+\epsilon r_nj_n^{-1/2}}}\le 2\exp\pp{-c_\epsilon\sqrt{j_n}},
\]%\end{equation}
for all $n\in\N$. 
We have proved the previous maximal inequality and hence \eqref{eq:max over grid}.

Next, we prove \eqref{eq:max osci}. Note that for all $a\le z<z'\le b$, 
\begin{align}
\nonumber\abs{H_n(z)-H_n(z')}&\le \int_0^\infty \abs{F_{n,z}(y)-F_{n,z'}(y)}(N(\d y)+\d y)\\
& \le  \int_0^\infty \int_z^{z'}\abs{\frac\partial{\partial z''}F_{n,z''}(y)}\d z''\pp{N(\d y)+\d y}\nonumber\\
& \le |z-z'|\int_0^\infty \sup_{d\in[a,b]}\abs{\frac{\partial}{\partial d}F_{n,d}(y)}(N(\d y)+\d y)= L_n|z-z'|,\label{eq:Lipschitz}
\end{align}
with 
\[
L_n:=\int_0^\infty G_n(y)N(\d y)+\int_0^\infty G_n(y)\d y \qmand
G_n(y):=
\sup_{d\in[a,b]}
\abs{\frac{\partial}{\partial d}F_{n,d}(y)}.
\]
The key estimate is 
\begin{equation}\label{eq:int G_n}
\int_0^\infty G_n(y)\d y\leq Cn^\alpha j_n^{1/2-\alpha}\le Cn.
\end{equation}
The above then implies that 
$\esp L_n\le Cn$ for all $n\in\N$ and hence \(L_n\leq n^3\) eventually almost
surely (by the Markov inequality and the Borel--Cantelli lemma).  This and \eqref{eq:Lipschitz} then imply \eqref{eq:max osci}. Note that
in the last inequality in \eqref{eq:int G_n} we take a simple upper bound; this does not affect the constraint on the rate of $j_n\to\infty$ we impose. Similarly, the powers in the grid size $n^{-4}$ and $L_n\le n^3$ are taken for convenience; they can be chosen to be smaller but will not affect the constraint on the rate of $j_n\to\infty$.

It remains to prove \eqref{eq:int G_n}. Writing $z = n(d/y)^{1/\alpha}$, we have
\[
\frac{\partial}{\partial d}F_{n,d}(y)
=\frac\partial{\partial d}q_{j_n}\pp{d^{1/\alpha}ny^{-1/\alpha}}
=\frac{j_n-z}{\alpha d}q_{j_n}(z)
\]
and
\[
\abs{j_n-z}q_{j_n}(z)
\leq j_nq_{j_n}(z)+(j_n+1)q_{j_n+1}(z).
\]
We first bound 
\[
\int_0^\infty \sup_{d\in[a,b]}q_{j_n}(z)\d y = \pp{\frac n{j_n}}^\alpha \int_0^\infty \sup_{d\in[a,b]}q_{j_n}\pp{(d/v)^{1/\alpha}j_n}\d v.
\]
We focus on the integral. 
Recall that $\sup_{z\in\R_+}q_{j_n}(z) = q_{j_n}(j_n) \le C j_n^{-1/2}$, and that
$q_{j_n}(z)$ is monotone over $z<j_n$ and $z>j_n$ respectively. We have that the integration restricted to $[a,b]$ is of order $O(j_n^{-1/2})$, and moreover by monotonicity
\[
\int_0^a\sup_{d\in[a,b]}q_{j_n}\pp{(d/v)^{1/\alpha}j_n}\d v\le \int_0^a q_{j_n}\pp{(a/v)^{1/\alpha}j_n}\d v = \alpha aj_n^\alpha \int_{j_n}^\infty q_{j_n}(z)z^{-\alpha-1}\d z
\]
and similarly
\[
\int_b^\infty\sup_{d\in[a,b]}q_{j_n}\pp{(d/v)^{1/\alpha}j_n}\d v\le \alpha bj_n^\alpha \int_0^{j_n} q_{j_n}(z)z^{-\alpha-1}\d z.
\]
That is, the integral over $[0,a]\cup[b,\infty)$ is bounded by $Cj_n^\alpha \int_0^\infty q_{j_n}(z)z^{-\alpha-1}\d z = Cj_n^\alpha\Gamma(j_n-\alpha)/\Gamma(j_n+1)\le C/j_n$. Combining all the estimates we have thus proved 
\[%\begin{equation}\label{eq:pi envelope combined}
\int_0^\infty
\sup_{d\in[a,b]}
q_{j_n}\pp{j_n(d/v)^{1/\alpha}}\d v
\leq Cj_n^{-1/2}.
\]%\end{equation}
The integral with $q_{j_n}(j_n(d/v)^{1/\alpha})$ replaced by $q_{j_n+1}(j_n(d/v)^{1/\alpha})$ has an upper bound of the same order; it suffices to notice that  $\sup_{d\in[a,b]}q_{j_n+1}\spp{j_n(d/v)^{1/\alpha}}\le \sup_{d\in[a',b']}q_{j_n+1}\spp{(j_n+1)(d/v)^{1/\alpha}}$ for some appropriately chosen $a',b'$. In summary, we have shown that 
\begin{align*}
\int_0^\infty G_n(y)\d y& \le C\pp{j_n\int_0^\infty \sup_{d\in[a,b]}q_{j_n}(z)\d y+(j_n+1)\int_0^\infty \sup_{d\in[a,b]}q_{j_n+1}(z)\d y
}\\
& \le Cj_n\pp{\frac n{j_n}}^\alpha j_n^{-1/2}
\leq Cn^\alpha j_n^{1/2-\alpha},
\end{align*}
which is \eqref{eq:int G_n}.

%\color{blue}
\smallskip
%\newpage

It remains to prove the de-Poissonization.
We show that 
\[%\equh\label{eq:0 process dP}
\limn\sup_{t\in[-T,T]}
\frac{
\sabs{
\esp\pp{C_{j_n}(\floor{s_n(t)})\mmid\calP}
-\esp\spp{\wt C_{j_n}(s_n(t))\mid\calP}
}
}{\sigma_{n,j_n}}
=0
\]%\eque
almost surely, when $j_n\to\infty$ and \eqref{eq:j_n upper} holds (without assuming $j_n/(\log n)^2\to\infty$). 
But this is exactly \eqref{eq:2''} applied to an urn model with random sampling frequencies. This completes the proof.
\end{proof}

%\newpage
\section{The critical regime}\label{sec:crit}
In the critical regime, we consider 
\equh\label{eq:j_n crit}
j_n\equiv j_n(r) := \floor{ rn^{\alpha/(1+\alpha)}}
\eque for some $r>0$. In this case, we work with the Karlin model with asymptotic frequencies satisfying
\[%\equh\label{eq:p_n}
p_j \sim d_0 j^{-1/\alpha} \mmas j\to\infty
\]%\eque
for some constant $d_0\in(0,\infty)$. 

We shall establish point-process convergence results that imply the convergence to an M/M/$\infty$ queue as an immediate corollary. 
It is well known that the asymptotic frequencies of $(\alpha,\theta)$-partitions satisfy 
$P_j^\downarrow= d_0\Gamma_j^{-1/\alpha}\sim d_0 j^{-1/\alpha}$ almost surely as $j\to\infty$ for a random constant $d_0$. It turns out that the fluctuations around $d_0j^{-1/\alpha}$ are so strong that the analysis has to be dealt with as a separate case from the one when the fluctuations are negligible. 
In particular, the difference has an impact at the level of point-process convergence, where different normalizations are needed. For convergence of the normalized counting process the two cases can be unified.

We shall work under the following two sets of conditions separately.
\begin{enumerate}[(i)]
\item
($(\alpha,\theta)$-partitions) In this case we have $p_n = P_n^\downarrow$. Recall that we rewrite it as $P_n^{\downarrow} = (S_{\alpha}/\Gamma(1-\alpha))^{1/\alpha}\Gamma_n^{-1/\alpha}$ with $(\Gamma_n)_{n\in\N}$ as in \eqref{eq:S_alpha 2} and \eqref{eq:Gamma_j}, and these random variables  are all $\calP$-measurable.  We shall simply write
\equh\label{eq:p_n CRP}
p_n = d_0 \Gamma_n^{-1/\alpha}, n\in\N \qmwith d_0 :=\pp{\frac{S_\alpha}{\Gamma(1-\alpha)}}^{1/\alpha}.
\eque
Recall also that when $\theta=0$ then $(\Gamma_j)_{j\in\N}$ has the law of consecutive arrival times of a standard Poisson process, while when $\theta\ne 0$ this fact no longer holds. 
\item (General case) For the random partitions induced by a Karlin model with sampling frequencies $(p_n)_{n\in\N}$, we shall assume for $\alpha\in(0,1)$,
\equh\label{eq:p_n general}
p_n = d_0n^{-1/\alpha}(1+\rho_n) \qmwith  \limn n^{1/2}\rho_n=0.
\eque
In this case we assume $d_0$ to be a constant.
\end{enumerate}

Set
\equh\label{eq:ell_n}
\ell_n :=\floor{\pp{\frac 1{d_0}\frac{j_n}n}^{-\alpha}} =\pp{\frac r{d_0}}^{-\alpha} n^{\alpha/(1+\alpha)}+O(1) \qmand \wt\ell_n := 2\ell_n-\Gamma_{\ell_n}.
\eque
The choice of $\ell_n$ ensures that $np_{\ell_n}\sim j_n$ and $j_n\sim (r^{\alpha+1}/d_0^\alpha)\ell_n$, and the same relations hold with $\ell_n$ replaced by $\wt\ell_n$.
For the Karlin model, write  $K_{m,\ell} = \summ i1m \inddd{Y_i = \ell}$, and let
 \[
 T\topp\ell_{j}:=\min\ccbb{m\in\N: K_{m,\ell} = j}
 \] 
 denote the first time that $\ell$ is sampled $j$ times (the $\ell$-th urn has $j$ balls).
Now, the point processes of interest for the two cases are
\begin{align}%\label{eq:xi_n}
\xi_n &:=\sif\ell1 \ddelta{(\ell-\wt\ell_n)/j_n^{1/2}, j_n(T_{j_n}\topp \ell/n-1), j_n(T\topp \ell_{j_n+1}-T\topp\ell_{j_n})/n},\nonumber\\
\xi_n' &:= \sif\ell1 \ddelta{(\ell-\ell_n)/j_n^{1/2}, j_n(T_{j_n}\topp \ell/n-1), j_n(T\topp \ell_{j_n+1}-T\topp\ell_{j_n})/n}.\label{eq:xi'_n}
\end{align}
In words, we are interested in the time interval during which for each $\ell$ the urn has exactly $j_n$ balls, and for this purpose we record the starting and the duration times of  the time interval in the second and third coordinates of the point process.  In order to have a non-degenerate limit the labels and time of having $j_n$ balls are both re-scaled. The only difference between the two cases is that they  have different centerings for $\ell$ ($\wt\ell_n$ vs.~$\ell_n$). 

Set
\equh\label{eq:mu_alpha}
\sigma_\alpha^2\equiv \sigma_{\alpha,r,d_0}^2:= \pp{\alpha r^{-\alpha-1}d_0^\alpha}^2 \qmand 
\mu_\alpha(\d x) := \frac1{\sqrt{2\pi}}e^{-x^2/(2\sigma_\alpha^2)}\d x.
\eque
Our reference for point-process convergence is \citet{resnick87extreme}. We let $\calM_p(E)$ denote the space of Radon point measures on $E$. 

We first state the results for $(\alpha,\theta)$-partitions. In this case, $\sigma_{\alpha,r} = \alpha r^{-\alpha-1}S_\alpha/\Gamma(1-\alpha)$. 
 \begin{theorem}\label{thm:PP conv}
For $(p_n = P_n^\downarrow)_{n\in\N}$ from the $(\alpha,\theta)$-partitions (i.e., as in \eqref{eq:p_n CRP}), we have
\equh\label{eq:PP xi_n}
\xi_n\aswto \xi\eqd\PPP\pp{\R\times\R\times[0,\infty),\mu_\alpha(\d x)\d y e^{-z}\d z}, \eque
with respect to $\calP$ as $n\to\infty$ in $\calM_p(\R\times\R\times[0,\infty))$. 
\end{theorem}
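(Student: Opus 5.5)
Conditionally on $\calP$ the urn dynamics are independent across labels $\ell$. The plan is to Poissonize, prove convergence of Laplace functionals conditionally on $\calP$ for a.e.\ realization, and then de-Poissonize. In the Poissonized model, conditionally on $\calP$, the points
\[
\bigl((\ell-\wt\ell_n)/j_n^{1/2},\ j_n(\wt T^{(\ell)}_{j_n}/n-1),\ j_n(\wt T^{(\ell)}_{j_n+1}-\wt T^{(\ell)}_{j_n})/n\bigr),
\]
where $\wt T^{(\ell)}_j$ is the $j$-th arrival of $\calN_\ell$, are independent. The gap $\wt T^{(\ell)}_{j_n+1}-\wt T^{(\ell)}_{j_n}$ is exponential with rate $p_\ell$ and is independent of $\wt T^{(\ell)}_{j_n}\sim$ Gamma$(j_n,p_\ell)$. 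Hence for a nonnegative continuous $f$ with compact support,
\[
-\log\esp\bigl(e^{-\xi_n(f)}\mid\calP\bigr)=\sum_\ell\bigl(1-\esp(e^{-f(\cdot)}\mid\calP)\bigr)\approx\sum_\ell\esp\bigl(1-e^{-f}\mid\calP\bigr)
\]
(the terms are uniformly small). By the standard criterion for independent arrays of points (Resnick, Prop.\ 3.21 / Kallenberg), it suffices to show that the conditional intensity measure $\sum_\ell\proba(\text{point}_\ell\in\cdot\mid\calP)$ converges vaguely, for a.e.\ $\omega$, to $\mu_\alpha(\d x)\,\d y\,e^{-z}\d z$, and that $\max_\ell\proba(\text{point}_\ell\in K\mid\calP)\to0$ for compact $K$.

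The third coordinate is $(j_n/n)\,\mathrm{Exp}(p_\ell)$. For $\ell=\wt\ell_n+O(j_n^{1/2})$ we have $np_\ell/j_n\to1$ almost surely, using $P^\downarrow_\ell=d_0\Gamma_\ell^{-1/\alpha}$ and $\Gamma_\ell=\ell+O(\sqrt{\ell\log\log\ell})$. So this coordinate converges to Exp$(1)$ uniformly over the relevant $\ell$. For the second coordinate,
\[
j_n(\wt T_{j_n}/n-1)=j_n\Bigl(\frac{G_{j_n}}{np_\ell}-1\Bigr),
\]
with $G_{j_n}$ standard Gamma. We need a local limit theorem: the density of $G_{j_n}$ near $j_n+u$ is $(2\pi j_n)^{-1/2}\exp(-u^2/(2j_n))(1+o(1))$. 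Thus $\proba(\text{second coord}\in[y_1,y_2]\mid\calP)\approx (np_\ell/j_n)(y_2-y_1)(2\pi j_n)^{-1/2}\exp(-(np_\ell-j_n)^2/(2j_n))$. Write $\Gamma_\ell=\Gamma_{\ell_n}+(\ell-\ell_n)+\bigl(\wb N\text{-increment}\bigr)$. A Taylor expansion then gives
\[
np_\ell-j_n\approx -\frac{j_n}{\alpha\ell_n}\bigl(\Gamma_\ell-\Gamma_{\ell_n}^{\rm ref}\bigr).
\]
Here is where the centering $\wt\ell_n=2\ell_n-\Gamma_{\ell_n}$ enters: it absorbs the random shift $\Gamma_{\ell_n}-\ell_n=O(\sqrt{\ell_n\log\log\ell_n})$, which is of the same order as or larger than $j_n^{1/2}$ since $\ell_n\asymp j_n$. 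In the window $\ell=\wt\ell_n+xj_n^{1/2}$ the residual fluctuation $\Gamma_\ell-\Gamma_{\wt\ell_n}-(\ell-\wt\ell_n)$ is only $O(j_n^{1/4})$, hence negligible. We therefore get $(np_\ell-j_n)/j_n^{1/2}\to -c\,x$ with $c=j_n/(\alpha\ell_n)\cdot$const, which matches $\sigma_\alpha$ after bookkeeping. Summing over $\ell$ in the window (a Riemann sum with step $j_n^{-1/2}$) gives the density $(2\pi)^{-1/2}e^{-x^2/(2\sigma_\alpha^2)}$ in $x$, times Lebesgue measure in $y$, times $e^{-z}$ in $z$. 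Labels with $|\ell-\wt\ell_n|\gg j_n^{1/2}$ contribute negligibly by Gaussian or Chernoff tails for $G_{j_n}$. To make these statements almost sure, I would use the LIL for the Poisson process $N$ (i.e.\ $\Gamma$), first with $\theta=0$, and then transfer to general $\theta$ by absolute continuity (Lemma \ref{lem:P_j}(ii)), since the statement is a.s.\ in $\calP$.

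For de-Poissonization, couple via $T^{(\ell)}_j=$ index $m$ with $\tau_m=\wt T^{(\ell)}_j$. Then $j_n(T/n-1)$ and $j_n(\tau_T/n-1)$ differ by $j_n|\tau_m-m|/n\le Cj_n\sqrt{\log\log n}/\sqrt n\to0$ a.s.\ uniformly, since $j_n\asymp n^{\alpha/(1+\alpha)}\ll n^{1/2}$; the gap coordinate is handled similarly. A vaguely negligible uniform perturbation of the points preserves the limit. The main obstacle I expect is the middle step: controlling the random fluctuations of $\Gamma_\ell$ uniformly over the $j_n^{1/2}$-window around $\wt\ell_n$. This requires increments of $N$ over windows of length $O(j_n^{1/2})$ to be $o(j_n^{1/2})$ almost surely, which I would prove by a Borel--Cantelli bound on Poisson increment maxima, together with tail control to make the intensity sums converge uniformly on compacts.
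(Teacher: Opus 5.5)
Your proposal is correct and follows essentially the same route as the paper. You Poissonize, check the independent-null-array (Kallenberg) criterion for a.e.\ realization of $\calP$ via the local CLT for the Gamma law and the expansion $np_{\ell_n+k}-j_n\approx -j_n(\wb\Gamma_{\ell_n}+k)/(\alpha\ell_n)$ of Lemma~\ref{lem:CRP case} (with $\wt\ell_n$ absorbing $\wb\Gamma_{\ell_n}$), work under $\theta=0$ with the LIL and Cs\"org\H{o}--R\'ev\'esz-type increment bounds before transferring by absolute continuity, and de-Poissonize through $T_j^{(\ell)}=\calN(\tau_j^{(\ell)})$; your uniform-perturbation-of-points version of that last step is equivalent to the paper's sandwich argument on means and void probabilities.

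One bookkeeping point needs tightening. The residual that actually enters is $\wb\Gamma_\ell-\wb\Gamma_{\ell_n}$. Your $\Gamma_\ell-\Gamma_{\wt\ell_n}-(\ell-\wt\ell_n)$ silently drops the extra term $\Gamma_{\wt\ell_n}-\ell_n$. The full residual is an increment over $|\ell-\ell_n|\lesssim\sqrt{j_n\log\log n}$ steps at a random location, so it is $O(j_n^{1/4}(\log n)^{1/2}(\log\log n)^{1/4})$ rather than $O(j_n^{1/4})$. This is still $o(j_n^{1/2})$, so the argument is unaffected.
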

%YZ0725
Essentially by a continuous mapping argument, we can prove a convergence of measure-valued processes and also a convergence to the $M/M/\infty$ queue (this second part is exactly Theorem \ref{thm:crit}). 
Let $\mathcal M_{p,f}(\mathbb R)$ be the space of finite point measures on $\mathbb R$, equipped with the weak topology. We shall study convergence of $\calM_{p,f}(\R)$-valued processes.  Write
\[
s_n(t) = n\pp{1+\frac t{j_n}}_+.
\]Define
\[
\mathcal Z_n(t):=\sum_{\ell\geq1}\delta_{(\ell-\wt\ell_n)/\sqrt{j_n}}\mathbf 1_{\{K_{\floor{s_n(t)},\ell}=j_n\}}=\sum_{\ell\geq1}\delta_{(\ell-\wt\ell_n)/\sqrt{j_n}}\inddd{T_{j_n}\topp\ell\leq \floor{s_n(t)}<T_{j_n+1}\topp\ell}.
\]
Representing the point process in the limit of \eqref{eq:PP xi_n} as $\xi=\sum_i\delta_{(x_i,y_i,z_i)}$, we set
\[
\mathcal Z(t):=\sum_i\delta_{x_i}\mathbf 1_{\{y_i\leq t<y_i+z_i\}}.
\]
For every $T>0$, the number of intervals $[y_i,y_i+z_i)$ intersecting $[-T,T]$ has conditional mean (given $\calP$)
$\mu_\alpha(\mathbb R)\int_0^\infty(2T+z)e^{-z}\d z=(2T+1)\mu_\alpha(\mathbb R)$.
It is therefore finite almost surely, and $\mathcal Z$ is a well-defined c\`adl\`ag $\mathcal M_{p,f}(\mathbb R)$-valued process. It is clear that $\calZ$ is stationary in $t$ (since $\xi$ is translation invariant), and for each fixed $t$  
 $\mathcal Z(t)$ is readily checked to be a Cox process on $\mathbb R$ with intensity measure
$\mu_\alpha(\d x)\int_0^\infty\int_{t-z}^t \d y e^{-z}\d z=\mu_\alpha(\d x)$,
 and it is piecewise constant between consecutive times of discontinuity. 

\begin{theorem}\label{thm:discrete-cmt}
We have
\equh\label{eq:Z_n}
\pp{\mathcal Z_n(t)}_{t\in\R}\aswto\pp{\mathcal Z(t)}_{t\in\R}
\eque
with respect to $\calP$ in $D(\mathbb R,\mathcal M_{p,f}(\mathbb R))$ with the local $J_1$ topology.
Moreover,
\equh\label{eq:M/M/infty}
\left(C_{j_n}(\floor{s_n(t)})\right)_{t\in\mathbb R}\aswto(\calC_{\alpha,r}(t))_{t\in\R}
\eque
with respect to $\calP$ in $D(\mathbb R)$ with the local $J_1$ topology.

\end{theorem}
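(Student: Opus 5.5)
We derive Theorem \ref{thm:discrete-cmt} from Theorem \ref{thm:PP conv} by a continuous-mapping argument, carried out path by path under the quenched law. By Skorokhod-type representation (conditionally on $\calP$, which is legitimate since $\calM_p$ is Polish and almost sure weak convergence is convergence of regular conditional laws for $\calP$-a.e.\ $\omega$), it suffices to show the following. The map
\[
\Phi:\ \xi=\sum_i\delta_{(x_i,y_i,z_i)}\longmapsto \pp{\sum_i\delta_{x_i}\inddd{y_i\le t<y_i+z_i}}_{t\in\R}
\]
is continuous from $\calM_p(\R\times\R\times[0,\infty))$ into $D(\R,\calM_{p,f}(\R))$ (local $J_1$) at every $\xi$ in a set of full measure for the limit law. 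In addition, $\calZ_n=\Phi(\xi_n)$ up to the integer-part discretization, which we must control separately.

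First, we identify $\calZ_n$. By definition, $\ell$ contributes to $\calZ_n(t)$ iff $T^{(\ell)}_{j_n}\le\floor{s_n(t)}<T^{(\ell)}_{j_n+1}$. Writing $y^n_\ell=j_n(T^{(\ell)}_{j_n}/n-1)$ and $z^n_\ell=j_n(T^{(\ell)}_{j_n+1}-T^{(\ell)}_{j_n})/n$, the condition is $y^n_\ell\le j_n(\floor{s_n(t)}/n-1)<y^n_\ell+z^n_\ell$. Since the $T$'s are integers, this equals $y_\ell^n\le t'<y^n_\ell+z^n_\ell$ with $t'$ differing from $t$ by less than $j_n/n\to0$, in a monotone piecewise-constant way. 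Hence $\calZ_n$ is $\Phi(\xi_n)$ composed with a time change $\lambda_n$ that converges uniformly to the identity on compacts. Composition with such time changes is harmless for $J_1$ convergence (one uses the time-change lemma, Lemma \ref{lem:time change}, as in the proof of Theorem \ref{thm:1}), after a minor adjustment so that $\lambda_n$ is a homeomorphism.

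Second, we establish continuity of $\Phi$. The good set consists of $\xi$ such that: only finitely many points have $[y_i,y_i+z_i)$ meeting $[-T-1,T+1]$ for every $T$ (which holds a.s., as computed just before the theorem); all $z_i>0$; all endpoints $y_i,\,y_i+z_i$ are distinct; no endpoint equals $\pm T$ for a countable dense set of $T$; and no point has first coordinate $x_i=\pm\infty$ issues. The Poisson limit with diffuse intensity satisfies all of these a.s. Fix $T$ and take $\xi_n\to\xi$ vaguely. The relevant set $\{(x,y,z):[y,y+z)\cap[-T,T]\ne\emptyset\}$ is not relatively compact, because $x$ and $z$ range over unbounded sets. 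This is the main obstacle: vague convergence alone does not prevent points escaping to $|x|\to\infty$ or $z\to\infty$, or far-away $y$ with huge $z$. I would handle it by a tightness estimate under the prelimit, showing
\[
\lim_{K\to\infty}\limsupn \proba\pp{\xi_n\pp{\{|x|>K\}\cup\{z>K\}\cup\{y<-K,\,y+z>-T\}}\ge1 \mmid \calP}=0
\]
a.s. This follows from first-moment bounds: the expected number of urns with $j_n$ balls at some time in $[s_n(-T),s_n(T)]$ and label $|\ell-\wt\ell_n|>K\sqrt{j_n}$ is controlled by Poisson/Gamma tail estimates. These are the same computations underlying Theorem \ref{thm:PP conv}, so I would extract them from its proof. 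On the resulting compact window, $\xi_n$ eventually has the same number of points as $\xi$, with coordinates converging. One then builds explicit piecewise-linear time changes matching the finitely many jump times $y_i^n\to y_i$ and $y_i^n+z_i^n\to y_i+z_i$, which are distinct in the limit. Between jumps the measures $\sum\delta_{x_i^n}$ converge weakly, giving $J_1$ convergence on $[-T,T]$ and hence local $J_1$ convergence. This proves \eqref{eq:Z_n}.

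Finally, \eqref{eq:M/M/infty} follows by applying the continuous map $\mu\mapsto\mu(\R)$ from $\calM_{p,f}(\R)$ with the weak topology to $\N$, pathwise, so $C_{j_n}(\floor{s_n(t)})=\calZ_n(t)(\R)$. It remains to identify $\calZ(\cdot)(\R)$. Conditionally on $\calP$, the pairs $(y_i,z_i)$ form a Poisson process with intensity $\mu_\alpha(\R)\,\d y\,e^{-z}\d z$, where $\mu_\alpha(\R)=\sigma_\alpha=c_\alpha(r)S_\alpha$. Arrivals therefore occur at rate $c_\alpha(r)S_\alpha$ with independent Exp(1) service times, which is exactly the stationary $M/M/\infty$ queue $\calC_{\alpha,r}$.
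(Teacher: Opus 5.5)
Your architecture matches the paper's. You restrict to the compact window $[-K,K]\times[-T-K,T]\times[0,K]$, use continuity of the truncated functional at almost every realization of the limiting Poisson process, and show the truncation is asymptotically negligible. You then finish with $\mu\mapsto\mu(\R)$ and the identification $\mu_\alpha(\R)=\sigma_\alpha=c_\alpha(r)S_\alpha$. Since $\Phi$ itself is not continuous for the vague topology, the reduction in your first paragraph is really the approximation argument of your second.

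Two smaller points. First, no time change is needed. Because $T^{(\ell)}_{j_n}$ and $T^{(\ell)}_{j_n+1}$ are integers, $T^{(\ell)}_{j_n}\le\floor{s_n(t)}$ iff $Y_{n,\ell}\le t$, and $\floor{s_n(t)}<T^{(\ell)}_{j_n+1}$ iff $t<Y_{n,\ell}+Z_{n,\ell}$, so $\calZ_n=\Phi(\xi_n)$ exactly. This matters, because Lemma \ref{lem:time change} assumes a continuous limit and could not be applied to the jump process $\calZ$. Second, your displayed tightness condition must be restricted to points whose interval $[y,y+z)$ meets $[-T,T]$. As written, $\xi_n(\{|x|>K\})=\infty$ almost surely, since every urn eventually reaches $j_n$ balls.

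The genuine gap is the localization estimate itself. This is the paper's Lemma \ref{lem:A}, which carries essentially all the work of the proof, and it is not a byproduct of the proof of Theorem \ref{thm:PP conv}. That proof (Kallenberg's criterion plus Lemma \ref{lem:CRP case}) only gives ratio asymptotics for labels with $|\ell-\wt\ell_n|=O(\sqrt{j_n\log\log n})$. You need, $\calP$-almost surely, that $\sum_{|\ell-\wt\ell_n|>K\sqrt{j_n}}\proba_\calP\pp{[Y_{n,\ell},Y_{n,\ell}+Z_{n,\ell})\cap[-T,T]\ne\emptyset}$ vanishes as $n\to\infty$ and then $K\to\infty$. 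That is a summable domination over infinitely many labels. The paper obtains it by splitting the label range:
\begin{itemize}
\item For $K\sqrt{j_n}<|\ell-\wt\ell_n|\le\delta\ell_n$, it proves the separation $|s_n(t)p_\ell-j_n|\ge(1-\epsilon)|\ell-\wt\ell_n|/\sigma_\alpha$, well beyond the range of Lemma \ref{lem:CRP case}. For $(\alpha,\theta)$-partitions this requires showing that the increments $\wb\Gamma_\ell-\wb\Gamma_{\ell_n}$, of order $\sqrt{\ell_n\log\log n}$ over $|\ell-\ell_n|\le2\delta\ell_n$, are dominated by $|\ell-\wt\ell_n|$ once the latter exceeds $K_0\sqrt{\ell_n\log\log n}$ with $K_0$ large. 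Combined with \eqref{eq:gamma-bound}, this gives a Gaussian bound $Cj_n^{-1/2}e^{-c(\ell-\wt\ell_n)^2/j_n}$.
\item For $\delta\ell_n<|\ell-\wt\ell_n|\le M\ell_n$, it uses monotonicity of $p_\ell$ and Chernoff bounds.
\item For $\ell>M\ell_n$, it uses a direct Poisson tail bound.
\item Finally, it transfers the whole tail sum from the Poissonized to the original model via a maximal inequality for $\calN$.
\end{itemize}
The long-lifetime part also needs its own argument. For $K>2T$ such an interval must contain $-T$ or $T$. One then combines a geometric waiting-time bound $e^{-K/4}$ with the local estimate \eqref{eq:central-mass} at $t=\pm T$ and at the shifted times $t_{n,K}$. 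Without this lemma, vague convergence of $\xi_n$ does not rule out points escaping to $|x|\to\infty$ or $z\to\infty$ while still contributing to $\calZ_n$ on $[-T,T]$. Your continuity argument therefore does not close.
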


Under \eqref{eq:p_n CRP}, both $d_0$ and $\sigma_\alpha$ are $\calP$-measurable. 
For each $t$, $\calZ(t)$ is understood as a Cox process (a Poisson point process with a random intensity measure), and each $\calC_{\alpha,r}(t)$ is understood as a mixed Poisson random variable (a Poisson random variable with a random parameter).  For example, the convergence \eqref{eq:PP xi_n} means that
\[
\limn \esp\pp{e^{-\xi_n(f)}\mmid\calP} = \esp \pp{\exp\pp{-\int_\R\int_\R\int_0^\infty \pp{1-e^{-f(x,y,z)}}\mu_\alpha(\d x)\d y e^{-z}\d z}\mmid\calP}
\] 
almost surely for all non-negative continuous functions $f$ with compact support on $\R\times\R\times[0,\infty)$.

We next state the results for urn counts of a general Karlin model. 
Again, while $(\alpha,\theta)$-partitions are a special case of the Karlin model, Theorem \ref{thm:PP conv} cannot be applied directly as mentioned already. There are two differences. First, the limiting counting process $\calC_{\alpha,r}$ is still an $M/M/\infty$ queue but the rate  is now $\sigma_{\alpha,r} = \alpha r^{-\alpha-1}d_0^\alpha$ with $d_0$ the constant as in \eqref{eq:p_n general}. The random intensity measure $\mu_\alpha$ in the limiting process $\calZ$ now also depends on the new $\sigma_{\alpha,r}$. Second, the pre-limit point process $\xi_n'$ has a different centering in the first coordinate from $\xi_n$.
We also define accordingly
\[
\mathcal Z_n'(t):=\sum_{\ell\geq1}\delta_{(\ell-\ell_n)/\sqrt{j_n}}\mathbf 1_{\{K_{\floor{s_n(t)},\ell}=j_n\}}=\sum_{\ell\geq1}\delta_{(\ell-\ell_n)/\sqrt{j_n}}\inddd{T_{j_n}\topp\ell\leq \floor{s_n(t)}<T_{j_n+1}\topp\ell}.
\]

\begin{theorem}\label{thm:PP conv general}
For $(p_n)_{n\in\N}$ under the assumption \eqref{eq:p_n general}, we have
\equh\label{eq:PP convergence}
\xi'_n\weakto \PPP\pp{\R\times\R\times[0,\infty),\mu_\alpha(\d x)\d y e^{-z}\d z}, \eque
as $n\to\infty$ in $\calM_p(\R\times\R\times[0,\infty))$. Moreover, 
\[
\pp{\calZ_n'(t)}_{t\in\R}\weakto \pp{\calZ(t)}_{t\in\R}
\]
in $D(\R,\calM_{p,f}(\R))$,  and
 \[
 \pp{C_{j_n}(\floor{s_n(t)})}_{t\in\R} \weakto (\calC_{\alpha,r}(t))_{t\in\R}
 \]
 in $D(\R)$ as $n\to\infty$.
\end{theorem}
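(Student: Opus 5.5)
Our plan is to prove the point-process convergence \eqref{eq:PP convergence} first for the Poissonized model, then de-Poissonize, and finally deduce the two functional statements by the continuous-mapping argument behind Theorem \ref{thm:discrete-cmt}.

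\emph{Step 1: Poissonized point process.} In the Poissonized model, urns are independent. For urn $\ell$ with rate $p_\ell$, the time $\wt T_{j_n}\topp\ell$ of its $j_n$-th arrival is Gamma$(j_n,p_\ell)$. The holding time $\wt T_{j_n+1}\topp\ell-\wt T_{j_n}\topp\ell$ is an independent Exp$(p_\ell)$. Hence
\[
\wt\xi_n':=\sif\ell1\ddelta{(\ell-\ell_n)/j_n^{1/2},\ j_n(\wt T_{j_n}\topp\ell/n-1),\ j_n(\wt T\topp\ell_{j_n+1}-\wt T\topp\ell_{j_n})/n}
\]
is a superposition of independent single points. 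By the standard criterion for null arrays \citep{resnick87extreme}, it suffices to show that the intensity measure of $\wt\xi_n'$ converges vaguely to $\mu_\alpha(\d x)\d y e^{-z}\d z$ and that $\max_\ell\proba(\text{point $\ell$ in a compact set})\to0$.

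For $\ell=\ell_n+x\sqrt{j_n}$, write $p_\ell=d_0\ell^{-1/\alpha}(1+\rho_\ell)$ and use $\ell_n\asymp j_n\asymp n^{\alpha/(1+\alpha)}$. Expanding gives
\[
np_\ell/j_n=1-\frac{x}{\alpha\ell_n^{1/2}}\pp{\frac{j_n}{\ell_n}}^{1/2}\!\!\!+o(j_n^{-1/2}).
\]
The assumption $\rho_\ell=o(\ell^{-1/2})=o(j_n^{-1/2})$ is exactly what makes the error negligible. The third coordinate is $j_np_\ell/n\cdot$Exp$(1)$, which converges to Exp$(1)$ since $np_\ell/j_n\to1$.

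For the second coordinate, the local CLT for Gamma$(j_n)$ gives the density of $Y=j_n(\wt T_{j_n}\topp\ell/n-1)$ at $y$. Since $np_\ell\wt T_{j_n}\topp\ell/n\sim$ Gamma$(j_n)$, the density is $\approx \frac{p_\ell n}{j_n}\cdot\frac{1}{\sqrt{2\pi j_n}}\exp(-(\text{shift})^2/2)$. The standardized shift is $\sqrt{j_n}(np_\ell/j_n-1)+O(y/\sqrt{j_n})$, which converges to $-x\,c$ for an explicit constant $c$ depending on $\alpha,r,d_0$. Summing over $\ell$ in $\sqrt{j_n}\,\d x$-blocks produces the Gaussian factor $\mu_\alpha(\d x)\d y$, and $\sigma_\alpha$ comes out from $c$. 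We expect routine bookkeeping to confirm that $\sigma_\alpha^2$ matches \eqref{eq:mu_alpha}. Uniformity over compact $y$-sets follows from the uniform local CLT, and urns far from $\ell_n$ contribute negligibly by Chernoff bounds on the Gamma law.

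\emph{Step 2: de-Poissonization.} We couple the two models by $T\topp\ell_j=\calN(\wt T\topp\ell_j)$, i.e. the discrete time equals the number of arrivals of $\calN$ up to the Poisson time. By the LIL, uniformly for times in $[n/2,2n]$, the map $u\mapsto\calN(u)$ differs from the identity by $O(\sqrt{n\log\log n})$. After multiplying by $j_n/n$ this is $O(j_n\sqrt{\log\log n/n})\to0$, since $j_n\ll n^{1/2}$.

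The duration coordinate is a difference of nearby times, $\calN(b)-\calN(a)$ with $b-a=O(n/j_n)$. It equals $(b-a)(1+o(1))$ uniformly, again by the LIL on increments. So every atom of $\wt\xi_n'$ in a compact set moves by $o(1)$ in each coordinate. Since only finitely many atoms lie in a neighbourhood of that compact set with high probability, $\xi_n'$ and $\wt\xi_n'$ have the same vague limit.

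\emph{Step 3: counting processes.} We write $\calZ_n'(t)$ as the image of $\xi_n'$ under $\xi\mapsto\sum\delta_x\ind\{y\le t<y+z\}$, restricted to $t\in[-T,T]$. This map is a.s. continuous at $\xi$ with respect to the local $J_1$ topology: a.s. there are no ties among endpoints and no atoms with $y$ or $y+z$ at $\pm T$, and atoms match up one to one.

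The remaining issue is that the map is not continuous for the vague topology, because of possible contributions from atoms with large $|x|$ or $y\ll-T$ with long $z$. We expect this tightness to be the main obstacle. We would handle it with expectation bounds from Step 1: the expected number of $\ell$ with $|\ell-\ell_n|>M\sqrt{j_n}$ alive during $[-T,T]$ tends to $0$ as $M\to\infty$, uniformly in $n$, and likewise for atoms with $y<-M$. Both bounds come from Gamma tail estimates. Finally, the total mass map gives $C_{j_n}(\floor{s_n(t)})$, and $\calZ(\R)$ is an immigration--death process with immigration rate $\mu_\alpha(\R)$ and unit death rate, which identifies the limit as $\calC_{\alpha,r}$.
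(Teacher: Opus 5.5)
Your proposal is correct and follows essentially the paper's route: Poissonize, check the mean and avoidance (null-array) conditions via the expansion $np_{\ell_n+k}-j_n=-j_nk/(\alpha\ell_n)+o(\sqrt{j_n})$, where $n^{1/2}\rho_n\to0$ enters exactly as in Lemma \ref{lem:CRP case}(ii), de-Poissonize through the coupling $T_j\topp\ell=\calN(\tau_j\topp\ell)$, and then apply the truncated continuous-mapping argument of Theorem \ref{thm:discrete-cmt}. Two small fixes: the rescaled holding time is $\frac{j_n}{np_\ell}\cdot{\rm Exp}(1)$ rather than $\frac{j_np_\ell}{n}\cdot{\rm Exp}(1)$, and in Step 3 you should also truncate on the lifetime $z\le K$, as the paper does with the events $A_{n,K,T}$ and $B_{n,K,T}$ of Lemma \ref{lem:A}, because atoms born inside $[-M,T]$ with very long lifetimes are not removed by your cut $y<-M$ (they are, however, handled by the same exponential-tail bound).
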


Here we provide a heuristic overview of the proof. 
It is not hard to see that the limit of $C_{j_n}(n)$ should be a Poisson random variable (this is what \citet{banderier24phase} proved, with a random parameter). Indeed, the expected number of balls in the urn with label $\ell$ after $n$ rounds is $np_\ell$, and naturally we expect the labels of those urns with $j_n$ balls to be such that $np_\ell$ is close to $j_n$. Because of this argument we chose \eqref{eq:ell_n}. 
Also, it is clear that for every fixed $\ell$ close to $\ell_n$, the probability, say $q_{n,\ell}$, that the corresponding urn has exactly $j_n$ balls at time $n$ is negligible, and a quick calculation reveals that the summation of $q_{n,\ell}$ indexed by $\ell$ in a neighborhood of $\ell_n$ of size $j_n^{1/2}$ is of order one, and hence we are in the typical situation of a Poisson limit theorem. 

The point process $\xi_n'$ is then the natural candidate for the more refined point-process convergence supporting the Poisson limit theorem, and the proof shall follow the standard approach by Kallenberg. As usual, the Poissonization technique is applied first. That is, we first translate the question into the one for the Poissonized model, then apply Kallenberg's method to it, and at the end apply a de-Poissonization argument to complete the proof.  

However, in the case of $(\alpha,\theta)$-partitions, it turns out that because of the fluctuations of $\Gamma_j^{-1/\alpha}$ around $j^{-1/\alpha}$ the centering has to be shifted by $\ell_n-\Gamma_{\ell_n}$ so that the point process converges (otherwise it is only tight when evaluated over $[a,b]\times\R\times[0,\infty)$). This relies on a very precise estimate of  $np_{\ell_n+k}-j_n$ for $k$ in a small neighborhood of $\ell_n$ of size $\sqrt{j_n\log\log n}$. This is the key step of the proof and is provided in Lemma \ref{lem:CRP case} in Section \ref{sec:estimates}. At the same time,  a quick calculation below shows why $(p_n)_{n\in\N}$ defined in \eqref{eq:p_n CRP} violate the assumption \eqref{eq:p_n general}.  
%\newpage
\begin{remark}
Assume $\theta =0$. So now $(\Gamma_n)_{n\in\N}$ has the law of consecutive arrival times of a standard Poisson process.  For this example, we have 
\[
\limsupn \frac{\sqrt n|\rho_n|}{\sqrt{2\log\log n}} = \frac1\alpha, \mbox{ almost surely.}
\]Indeed, by Taylor's expansion we have
\[
\rho_n = \pp{\frac{\Gamma_n}n}^{-1/\alpha}-1 = -\frac1\alpha\frac{\Gamma_n-n}n+O\pp{\frac{(\Gamma_n-n)^2}{n^2}},
\]
and the claim follows from the law of iterated logarithm.
We have thus proved the claimed property with $\theta=0$. This is an almost sure result, and hence it holds for all $\theta>-\alpha$ by part (ii) of Lemma \ref{lem:P_j}.
\end{remark}

The rest of the section is devoted to proofs. We first establish key estimates on $np_{\ell_n+k}-j_n$ in Section \ref{sec:estimates} in Lemma \ref{lem:CRP case}, where the crucial difference of the two cases is summarized.  Moreover, the point-process convergence essentially reduces to this local expansion. Then we prove Theorem \ref{thm:PP conv} by the standard Poissonization argument in Sections \ref{sec:Poissonized} and \ref{sec:de-Poissonization}. The proof of Theorem \ref{thm:discrete-cmt} is provided in Section \ref{sec:CMT}. The proof of Theorem \ref{thm:PP conv general} follows the same approach and is sketched in Section \ref{sec:general}. 

%\newpage
\subsection{Key estimates}\label{sec:estimates}
Given $j_n$ as in \eqref{eq:j_n crit}, we have explained that $\ell_n$ is chosen so that $np_{\ell_n+k}\sim j_n$. Here we provide a more precise estimate on $np_{\ell_n+k}-j_n$. 
\begin{lemma}\label{lem:CRP case}
Suppose $\alpha\in(0,1)$. \begin{enumerate}[(i)]
\item 
Suppose $p_\ell = d_0\Gamma_\ell^{-1/\alpha}, \ell\in\N$ as in \eqref{eq:p_n CRP}. 
For every $K>0$,
\equh\label{eq:diff CRP}
\max_{|k|\le K\sqrt{j_n\log\log n}}\abs{np_{\ell_n+k}-j_n   +\frac{j_n(\Gamma_{\ell_n}-\ell_n+k)}{\alpha\ell_n}}= O\pp{j_n^{1/4}(\log n)^{1/2}\pp{\log\log n}^{1/4}},
\eque
almost surely. 
\item Suppose $p_n = d_0n^{-1/\alpha}(1+\rho_n)$. Then for all $K>0$,
\equh\label{eq:diff general}
\max_{|k|\le K j_n^{1/2}}\abs{np_{\ell_n+k}-j_n + \frac{j_nk}{\alpha\ell_n}} = O\pp{1}+O\pp{j_n\max_{|k|\le Kj_n^{1/2}}|\rho_{\ell_n+k}|}.
\eque
\end{enumerate}
\end{lemma}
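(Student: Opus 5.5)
The plan is a Taylor expansion of $x\mapsto x^{-1/\alpha}$ around the reference point $\ell_n$, with the choice of $\ell_n$ in \eqref{eq:ell_n} absorbing the main term. By definition $\ell_n = (j_n/(d_0 n))^{-\alpha}+O(1)$, so $n d_0 \ell_n^{-1/\alpha} = j_n(1+O(1/\ell_n)) = j_n + O(j_n/\ell_n) = j_n+O(1)$, since $j_n\asymp \ell_n$ in the critical regime.

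\emph{Part (ii).} Write $np_{\ell_n+k} = nd_0(\ell_n+k)^{-1/\alpha}(1+\rho_{\ell_n+k})$. For $|k|\le Kj_n^{1/2}$ we have $k/\ell_n = O(j_n^{-1/2})$. Expanding,
\[
(\ell_n+k)^{-1/\alpha} = \ell_n^{-1/\alpha}\Bigl(1-\frac{k}{\alpha\ell_n}+O\bigl(k^2/\ell_n^2\bigr)\Bigr).
\]
Multiplying by $nd_0$ and using $nd_0\ell_n^{-1/\alpha}=j_n+O(1)$ gives
\[
j_n - \frac{j_nk}{\alpha\ell_n} + O(1) + O\bigl(j_n k^2/\ell_n^2\bigr).
\]
The last term is $O(j_n\cdot j_n/\ell_n^2)=O(1)$. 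The factor $(1+\rho)$ contributes $O(j_n\max|\rho_{\ell_n+k}|)$, and all bounds are uniform in $k$. This yields \eqref{eq:diff general}.

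\emph{Part (i).} Write $np_{\ell_n+k}=nd_0\Gamma_{\ell_n+k}^{-1/\alpha}$ and set $\Gamma_{\ell_n+k} = \ell_n + X_{n,k}$ with
\[
X_{n,k}:=(\Gamma_{\ell_n}-\ell_n)+k+(\Gamma_{\ell_n+k}-\Gamma_{\ell_n}-k).
\]
The same Taylor expansion gives
\[
np_{\ell_n+k} = j_n - \frac{j_n X_{n,k}}{\alpha \ell_n} + O(1) + O\bigl(j_n X_{n,k}^2/\ell_n^2\bigr).
\]
This reduces the claim to two almost sure bounds, which need only be proved for $\theta=0$ by Lemma \ref{lem:P_j}(ii), where $\Gamma$ is a Poisson arrival sequence.

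The first bound is on the global fluctuation. By the LIL, $|\Gamma_{\ell_n}-\ell_n| = O(\sqrt{\ell_n\log\log \ell_n})$, so
\[
X_{n,k}=O\bigl(\sqrt{j_n\log\log n}\bigr), \qquad j_nX_{n,k}^2/\ell_n^2 = O(\log\log n),
\]
which is well within the target rate.

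The second bound is on the local increments. Define
\[
\Gamma_{\ell_n+k}-\Gamma_{\ell_n}-k=\sum (E_i-1)
\]
over at most $|k|\le K\sqrt{j_n\log\log n}$ i.i.d.\ centered exponentials. We need
\[
\max_{|k|\le m_n}\Bigl|\sum_{i}(E_i-1)\Bigr| = O\bigl(m_n^{1/2}(\log n)^{1/2}\bigr), \qquad m_n:=K\sqrt{j_n\log\log n},
\]
eventually almost surely. Multiplied by $j_n/\ell_n=O(1)$, this gives exactly the error $j_n^{1/4}(\log n)^{1/2}(\log\log n)^{1/4}$.

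The main obstacle is making this local bound hold simultaneously for all $n$, because the window centre $\ell_n$ moves with $n$. I would handle it as follows.
\begin{enumerate}[(a)]
\item Apply a Bernstein/Chernoff inequality for sums of centered exponentials together with Kolmogorov's or Doob's maximal inequality. This gives $\proba(\max_{|k|\le m}|\cdot|> C\sqrt{m\log n})\le 2n^{-c C^2}$ whenever $\sqrt{m\log n}\ll m$, which holds here since $m_n$ grows polynomially in $n$.
\item Choose $C$ large enough to make these probabilities summable in $n$.
\item Apply Borel--Cantelli. Union bounds are unnecessary, since for each $n$ there is only one window.
\end{enumerate}

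Combining the expansion with these two estimates yields \eqref{eq:diff CRP}.
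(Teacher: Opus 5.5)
Your single Taylor expansion of $x\mapsto x^{-1/\alpha}$ around $\ell_n$ with perturbation $X_{n,k}$ is a compact version of the paper's four-term split, and it is fine. So are your part (ii), the reduction to $\theta=0$, and the use of the LIL at the random index $\ell_n$. The LIL is a pathwise statement, so it can be evaluated at any random index tending to infinity.

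The gap is in step (c) of the local-increment bound. The claim that ``union bounds are unnecessary, since for each $n$ there is only one window'' is false. The window centre $\ell_n=\lfloor (d_0 n/j_n)^{\alpha}\rfloor$ is random. Under $\theta=0$, Lemma \ref{lem:N(D)} gives $d_0=(\sum_{\ell\ge1}\Gamma_\ell^{-1/\alpha})^{-1}$, which is a function of the entire sequence $(\Gamma_\ell)$, including the exponential spacings inside the window. Hence $\Gamma_{\ell_n+k}-\Gamma_{\ell_n}-k$ is not a partial sum of i.i.d.\ centred exponentials over a window fixed in advance. The single-window Bernstein/Doob estimate therefore does not bound $\proba(\max_{|k|\le m_n}|\Gamma_{\ell_n+k}-\Gamma_{\ell_n}-k|>C\sqrt{m_n\log n})$. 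Conditioning on $d_0$ does not help, because $d_0$ is not independent of those spacings.

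The moving centre by itself would be harmless, since Borel--Cantelli needs no independence across $n$. What breaks the argument is that the centre is selected by the same randomness that drives the increments. The paper flags exactly this: ``$\ell_n$ depends on $d_0$ which in turn depends on $(\Gamma_n)$.'' That is why it invokes the Csörgő--Révész increment theorem, which is uniform over all starting points $m\le n$.

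The repair fits within your framework:
\begin{enumerate}
\item Since $\ell_n\asymp n^{\alpha/(1+\alpha)}$, eventually $\ell_n\le n$ almost surely.
\item Bound instead the probability that $\max_{|k|\le m_n}|\Gamma_{m+k}-\Gamma_m-k|>C\sqrt{m_n\log n}$ for some deterministic $m\le n$. A union bound over these $n+1$ centres (both signs of $k$) gives at most a constant multiple of $(n+1)n^{-cC^2}$.
\item This is still summable once $C$ is large. Borel--Cantelli then yields the bound eventually for all centres simultaneously, in particular at $m=\ell_n$.
\end{enumerate}
The $\log n$ in the target rate, rather than $\log\log n$, is what makes this polynomial union-bound factor affordable. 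With this modification the rest of your argument goes through and gives \eqref{eq:diff CRP}.
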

\begin{proof}
We first prove (i). This is an almost sure estimate, and hence by part (ii) of Lemma \ref{lem:P_j} it suffices to prove the claim for $\theta=0$, which we assume throughout the proof. In this case, $(\Gamma_n)_{n\in\N}$ has the law of consecutive arrival times of a standard Poisson process. All the estimates below are in the almost sure sense. 

Throughout, we fix $K>0$. The constant $C$ below does not depend on $n$ and may change from line to line, and may be random (when it depends on $d_0$).  We write $c_n = O(d_n)$ if $\limsupn c_n/d_n <\infty$, and we write $c_n\sim d_n$ as $n\to\infty$ if $\limn c_n/d_n = 1$. 
For expressions depending on both $n$ and $k$, we write \[
   c_{n,k} = \what O(d_{n,k})
   \] if
\[
\limsupn\max_{|k|\le K\sqrt{j_n\log\log n}}\abs{\frac{c_{n,k}}{d_{n,k}}}<\infty. 
\]
We also recall that 
 $\limsupn|\Gamma_n-n|/\sqrt{2n\log\log n} = 1$ almost surely. 

We start by writing
\begin{align}
np_{\ell_n+k}-j_n & = d_0n\pp{\Gamma^{-1/\alpha}_{\ell_n} -\ell_{n}^{-1/\alpha}}\nonumber \\
 & \quad +d_0n\pp{\Gamma^{-1/\alpha}_{\ell_n+k} - \Gamma_{\ell_n}^{-1/\alpha}}+d_0n\pp{\ell_n^{-1/\alpha}-(\ell_n+k)^{-1/\alpha}}\nonumber\\
& \quad+ d_0n(\ell_n+k)^{-1/\alpha}-j_n.\label{eq:4 terms}
\end{align}
For the first term on the right-hand side of \eqref{eq:4 terms}, write
\[
\Gamma_{\ell_n}^{-1/\alpha} - \ell_n^{-1/\alpha} = \ell_n^{-1/\alpha}\pp{\pp{\frac{\Gamma_{\ell_n}}{\ell_n}}^{-1/\alpha}-1}.
\]
Assume $n$ is large enough that $\Gamma_n/n\in[1-\epsilon,1+\epsilon]$ and $\log\log n>0$. Then,  applying Taylor's expansion, we have
\begin{align}
d_0n\pp{\Gamma_{\ell_n}^{-1/\alpha}-\ell_n^{-1/\alpha}} & = d_0n\ell_n^{-1/\alpha}\pp{-\frac1\alpha \pp{\frac{\Gamma_{\ell_n}}{{\ell_n}}-1} + O\pp{\pp{\frac{\Gamma_{\ell_n}}{\ell_n}-1}^2}}\nonumber\\
& = -\frac{j_n}\alpha\frac{\Gamma_{\ell_n}-\ell_n}{\ell_n}\frac{d_0n}{j_n\ell_n^{1/\alpha}}+ O\pp{\frac{(\Gamma_{\ell_n}-\ell_n)^2}{\ell_n}}\nonumber\\
& = -\frac{j_n(\Gamma_{\ell_n}-\ell_n)}{\alpha\ell_n}\pp{1+O(n^{-\alpha/(1+\alpha)})}+O\pp{\log\log n}\nonumber\\
& = -\frac{j_n(\Gamma_{\ell_n}-\ell_n)}{\alpha\ell_n} + O(\log\log n).\label{eq:1st}
\end{align}
The term $O(\log\log n)$ comes from the application of the law of iterated logarithm.

For the last term on the right-hand side of \eqref{eq:4 terms}, 
we compute
\begin{align}
d_0n{(\ell_n+k)^{-1/\alpha}}-j_n& 
 = 
j_n\pp{\frac {d_0n}{j_n\ell_n^{1/\alpha}}\pp{\frac{\ell_n}{\ell_n+k}}^{1/\alpha}-1}\nonumber\\
& = 
j_n\pp{\frac n{n+O(n^{1/(1+\alpha)})}\pp{1-\frac k{\ell_n+k}}^{1/\alpha}-1}\nonumber\\
& = j_n\pp{- \frac k{\alpha(\ell_n+k)}+\what O\pp{\frac{k^2}{(\ell_n+k)^2}}+O(n^{-\alpha/(1+\alpha)})}  \nonumber\\
&= -\frac{j_nk}{\alpha\ell_n}+O(1)+\what O\pp{\frac{k^2}{\ell_n}} = -\frac{j_nk}{\alpha\ell_n}+\what O(\log\log n). \label{eq:4th}
 \end{align}
The leading terms in \eqref{eq:1st} and \eqref{eq:4th} together correspond to the approximating term on the left-hand side of  \eqref{eq:diff CRP}.

We next deal with the second and the third terms on the right-hand side of \eqref{eq:4 terms} together. Write
\[
\Gamma_{\ell_n+k}^{-1/\alpha}-\Gamma_{\ell_n}^{-1/\alpha}= \Gamma_{\ell_n}^{-1/\alpha} \pp{\pp{1+\frac{\Gamma_{\ell_n+k}-\Gamma_{\ell_n}}{\Gamma_{\ell_n}}}^{-1/\alpha}-1}.
\]
Set $k_n:=\sfloor{K\sqrt{j_n\log\log n}}$. We shall need the following uniform estimate: 
\equh\label{eq:what O(k)}
\max_{|k|\le k_n}\abs{\Gamma_{\ell_n+k}-\Gamma_{\ell_n}-k}=o(k_n), \mbox{ almost surely.}
\eque
Recall that $\ell_n$ depends on $d_0$ which in turn depends on $(\Gamma_n)_{n\in\N}$. We shall need the following stronger estimate:
\equh\label{eq:LIL}
\max_{|k|\le k_n}\abs{\Gamma_{\ell_n+k}-\Gamma_{\ell_n}-k}\le (1+\epsilon)\sqrt{2 k_n\log n}, \mbox{ for all $n$ large enough,}
\eque
almost surely.
(For negative $k$ the estimate is similar.)
Here, we rely on a result by 
\citet[Theorem 3.1.1]{csorgo81strong}; see \citet[Theorem A]{shao89problem} for a more accessible citation and a result on the sharpness of the assumption. Namely, since $k_n$ is of polynomial order, its assumptions are satisfied, and hence
\[
\limsupn\max_{m=0,\dots,n}\max_{k=1,\dots,k_n}\frac{|\Gamma_{m+k}-\Gamma_m-k|}{\sqrt{2k_n(\log(n/k_n)+\log\log n)}} = 1, \mbox{ almost surely.}
\]
So the order of $\max_{k=1,\dots,k_n}$ in \eqref{eq:LIL} follows. Since $k_n = o(\ell_n)$, we also notice $\max_{k=-k_n,\dots,-1}|\Gamma_{\ell_n+k}-\Gamma_{\ell_n}-k| \le\max_{m=0,\dots,\ell_n} \max_{k=1,\dots,k_n}|\Gamma_{m+k}-\Gamma_{m}-k|$ and apply the above. 
(The upper bound in \eqref{eq:LIL} may not be sharp but is good enough for our purposes later.)

Now, applying Taylor's expansion again we have
\begin{align}
d_0n\pp{\Gamma_{\ell_n+k}^{-1/\alpha}-\Gamma_{\ell_n}^{-1/\alpha}}  &
=d_0n\Gamma_{\ell_n}^{-1/\alpha}\pp{\pp{\frac{\Gamma_{\ell_n+k}}{\Gamma_{\ell_n}}}^{-1/\alpha}-1}= -\frac{d_0n}{\alpha}\Gamma_{\ell_n}^{-1/\alpha}\pp{\frac{\Gamma_{\ell_n+k}}{\Gamma_{\ell_n}}-1} + \omega_{n,k}\nonumber\\
& = -\frac{d_0n\ell_n^{-1/\alpha}(\Gamma_{\ell_n+k}-\Gamma_{\ell_n})}{\alpha\ell_n}\pp{\frac{\Gamma_{\ell_n}}{\ell_n}}^{-1/\alpha-1}+  \omega_{n,k}.\label{eq:2 terms}
\end{align}
For the first term in \eqref{eq:2 terms}, we have
\begin{align*}
-\frac{d_0n\ell_n^{-1/\alpha}(\Gamma_{\ell_n+k}-\Gamma_{\ell_n})}{\alpha\ell_n}\pp{\frac{\Gamma_{\ell_n}}{\ell_n}}^{-1/\alpha-1} &= -\frac{d_0n\ell_n^{-1/\alpha}(\Gamma_{\ell_n+k}-\Gamma_{\ell_n})}{\alpha\ell_n}\pp{1+O\pp{\frac{\Gamma_{\ell_n}-\ell_n}{\ell_n}}}\\
& = -\frac{d_0n\ell_n^{-1/\alpha}(\Gamma_{\ell_n+k}-\Gamma_{\ell_n})}{\alpha\ell_n}+\what O\pp{\log\log n},
\end{align*}
where in the last step we applied \eqref{eq:what O(k)} and the law of iterated logarithm. For the term $\omega_{n,k}$ in \eqref{eq:2 terms}, we have
\[
\omega_{n,k}=\what O\pp{n\Gamma_{\ell_n}^{-1/\alpha}{\pp{\frac{\Gamma_{\ell_n+k}}{\Gamma_{\ell_n}}-1}}^2} = \what O\pp{\frac1{j_n} \pp{\Gamma_{\ell_n+k}-\Gamma_{\ell_n}}^2}=\what O( \log\log n).
\]
So, \eqref{eq:2 terms} now yields
\[
d_0n\pp{\Gamma_{\ell_n+k}^{-1/\alpha}-\Gamma_{\ell_n}^{-1/\alpha}} = -\frac{d_0n\ell_n^{-1/\alpha}(\Gamma_{\ell_n+k}-\Gamma_{\ell_n})}{\alpha\ell_n}+\what O\pp{\log\log n}.
\]
We also have
\begin{align*}
d_0n\pp{\ell_n^{-1/\alpha}-(\ell_n+k)^{-1/\alpha}} & = d_0n\ell_n^{-1/\alpha}\pp{1-\pp{1+\frac{k}{\ell_n}}^{-1/\alpha} }=  \frac{d_0n}{\alpha}\ell_n^{-1/\alpha}\frac{k}{\ell_n} + \what O\pp{\frac{k^2}{\ell_n}}.
\end{align*}
Combining the above two expressions, we have
\begin{multline}\label{eq:2nd 3rd}
d_0n \pp{\Gamma_{\ell_n+k}^{-1/\alpha} - \Gamma_{\ell_n}^{-1/\alpha}} +d_0n\pp{\ell_n^{-1/\alpha}-(\ell_n+k)^{-1/\alpha}} \\
 = -\frac{d_0n\ell_n^{-1/\alpha}(\Gamma_{\ell_n+k}-\Gamma_{\ell_n}-k)}{\alpha\ell_n} + \what O(\log\log n).
\end{multline}
We continue to estimate the first term in \eqref{eq:2nd 3rd}, which is of the same order as $\Gamma_{\ell_n+k}-\Gamma_{\ell_n}-k$. 
The bound in \eqref{eq:LIL} gives the leading error term in \eqref{eq:diff CRP}. 
We have proved \eqref{eq:diff CRP}.

 Next, we prove (ii). Indeed, using $p_n = d_0n^{-1/\alpha}(1+\rho_n)$, we write
\begin{align*}
np_{\ell_n+k}-j_n &=np_{\ell_n+k}- d_0n(\ell_n+k)^{-1/\alpha} + d_0n(\ell_n+k)^{-1/\alpha}-j_n \\
&= d_0n(\ell_n+k)^{-1/\alpha}-j_n+ d_0n(\ell_n+k)^{-1/\alpha}\rho_{\ell_n+k}.
\end{align*}
This time we write $c_{n,k} = \wt O(d_{n,k})$ if  
\[
\limsupn \max_{k:|k|\le Kj_n^{1/2}}\abs{\frac{c_{n,k}}{d_{n,k}}} <\infty.
\]
Here, the restriction of $k$ is over a slightly smaller interval when compared to the restriction in the notation $\wt O$. We only need the counterpart of  \eqref{eq:4th}, which now becomes
\[
d_0n\pp{\ell_n+k}^{-1/\alpha}-j_n =  - \frac{j_nk}{\alpha\ell_n}+\wt O(1).
\]
We have proved \eqref{eq:diff general}. 
This completes the proof.
\end{proof}

\subsection{Convergence of the Poissonized model}\label{sec:Poissonized}
In Sections \ref{sec:Poissonized} and \ref{sec:de-Poissonization} we prove Theorem \ref{thm:PP conv}, which concerns $(\alpha,\theta)$-partitions.
We first investigate the Poissonized model. Recall that the sampling frequencies $(p_\ell)_{\ell\in\N}\equiv (P_\ell^\downarrow)_{\ell\in\N}$ follow the Poisson--Dirichlet distribution with parameter $(\alpha,\theta)$, and $(\calN_\ell)_{\ell\in\N}$ are conditionally independent Poisson process with respective rates $p_\ell$.  Let $\tau\topp \ell_j$ denote the $j$-th arrive time of $\calN_\ell$. Consider
\[
\wt\xi_n :=\sif\ell1 \ddelta{\pp{\ell-\wt\ell_n}/j_n^{1/2}, j_n\pp{\tau_{j_n}\topp \ell/n-1}, j_n\pp{\tau\topp \ell_{j_n+1}-\tau\topp\ell_{j_n}}/n}.
\]
Recall $\mu_\alpha$ in \eqref{eq:mu_alpha}. This is a Gaussian measure with a $\calP$-measurable random variance parameter $\sigma_\alpha^2$.  We shall show the following. 
\begin{proposition}\label{prop:PP conv}
For the Poissonized model corresponding to $(\alpha,\theta)$-partitions, 
\[
 \wt\xi_n\aswto \PPP\pp{\R\times\R\times[0,\infty),\mu_\alpha(\d x)\d y e^{-z}\d z},
 \]
 with respect to $\calP$ as $n\to\infty$.
 \end{proposition}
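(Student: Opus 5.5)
Conditionally on $\calP$, the processes $(\calN_\ell)_{\ell\in\N}$ are independent, so $\wt\xi_n$ is a superposition of independent single-point measures. The plan is Kallenberg's criterion for convergence to a simple point process, applied to the quenched laws given $\calP$ on an almost-sure event. Since the limit is a Poisson process with diffuse intensity, it suffices to show, almost surely, two things for every finite union $B$ of bounded boxes $[a_1,b_1)\times[a_2,b_2)\times[a_3,b_3)$ in $\R\times\R\times[0,\infty)$ (countably many with rational endpoints, so the null sets can be combined):
\begin{enumerate}[(a)]
\item $\esp(\wt\xi_n(B)\mid\calP)\to \mu_\alpha\otimes\d y\otimes e^{-z}\d z(B)$;
\item $\proba(\wt\xi_n(B)=0\mid\calP)\to\exp(-\mu_\alpha\otimes\d y\otimes e^{-z}\d z(B))$.
\end{enumerate}
For independent Bernoulli summands $\ind_{\{\text{point of urn }\ell\in B\}}$ with $\max_\ell$ probability $\to0$, (b) follows from (a) via $\log(1-q)=-q+O(q^2)$. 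So the core is the intensity computation. By the strong Markov property, $\tau^{(\ell)}_{j_n}\sim$ Gamma$(j_n,p_\ell)$ and $\tau^{(\ell)}_{j_n+1}-\tau^{(\ell)}_{j_n}\sim$ Exp$(p_\ell)$, independently. Since $np_\ell\sim j_n$ for relevant $\ell$, the third coordinate $j_n\,\mathrm{Exp}(p_\ell)/n$ converges to Exp$(1)$ uniformly in relevant $\ell$. This yields the $e^{-z}\d z$ factor, and the problem reduces to the first two coordinates.

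For the first two coordinates, fix $\ell=\wt\ell_n+x\sqrt{j_n}$ with $x$ in a compact set. A local CLT for the Gamma density gives
\[
\proba\Big(j_n(\tau^{(\ell)}_{j_n}/n-1)\in[a_2,b_2)\Big)=\int_{a_2}^{b_2}\frac{p_\ell n}{j_n}\,g_{j_n}\Big(p_\ell n(1+y/j_n)\Big)\d y,
\]
with $g_j(u)=e^{-u}u^{j-1}/\Gamma(j)$. By Stirling, $g_{j}(u)\approx (2\pi j)^{-1/2}\exp(-(u-j)^2/(2j))$ whenever $u-j=O(\sqrt j)$. Here $u-j_n= np_\ell-j_n+O(1)$, and Lemma \ref{lem:CRP case}(i) with $k=\ell-\ell_n=\ell_n-\Gamma_{\ell_n}+x\sqrt{j_n}$ gives
\[
np_\ell-j_n=-\frac{j_n\,x\sqrt{j_n}}{\alpha\ell_n}+O\big(j_n^{1/4}(\log n)^{1/2}(\log\log n)^{1/4}\big).
\]
This is exactly why the centering is $\wt\ell_n$: the term $\Gamma_{\ell_n}-\ell_n$ cancels. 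We also need $|k|\le K\sqrt{j_n\log\log n}$, which holds by the law of the iterated logarithm for $\Gamma_{\ell_n}-\ell_n$, since $\ell_n\asymp j_n$. Using $j_n/\ell_n\to r^{1+\alpha}/d_0^\alpha$, the quantity $(np_\ell-j_n)/\sqrt{j_n}$ converges to $-x\,r^{1+\alpha}/(\alpha d_0^\alpha)$, up to an error $o(1)$ since $j_n^{1/4}(\log n)^{1/2}(\log\log n)^{1/4}=o(\sqrt{j_n})$. Hence each $\ell$ contributes $(2\pi j_n)^{-1/2}\exp\big(-x^2r^{2+2\alpha}/(2\alpha^2d_0^{2\alpha})\big)(b_2-a_2)(1+o(1))$, uniformly in $x$ on compacts. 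Summing over $\ell$ with $(\ell-\wt\ell_n)/\sqrt{j_n}\in[a_1,b_1)$ is a Riemann sum with mesh $j_n^{-1/2}$, which produces
\[
\int_{a_1}^{b_1}(2\pi)^{-1/2}e^{-x^2/(2\sigma^2)}\d x\,(b_2-a_2),
\]
matching $\mu_\alpha$ up to how $\sigma_\alpha$ is parametrized. I will check that the variance constant agrees with \eqref{eq:mu_alpha}, i.e.\ that the Gaussian variance is $(\alpha d_0^\alpha r^{-1-\alpha})^2$. All these estimates hold on the almost-sure event of Lemma \ref{lem:CRP case}, so the convergence is quenched given $\calP$.

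The main obstacle is the main-term computation just described, together with the uniformity needed: Stirling/local CLT uniformly in the window, and control of $\ell$ outside $|x|\le M$. For the tails I would use that $\esp(\wt\xi_n(\{|x|>M\}\times[a_2,b_2)\times[0,\infty))\mid\calP)$ is bounded by a Gaussian tail in $M$ for moderate $|x|$, using monotonicity of $p_\ell$ and Lemma \ref{lem:CRP case}(i) up to $|k|\le K\sqrt{j_n\log\log n}$. For $|k|$ beyond that window, I would use Chernoff bounds for Gamma$(j_n,p_\ell)$, exploiting that $|np_\ell-j_n|\gtrsim\sqrt{j_n\log\log n}$ there, which gives a summable, vanishing contribution. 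Finally, the condition $\max_\ell q_{n,\ell}=O(j_n^{-1/2})\to0$ is immediate, which completes (b) and hence the proposition.
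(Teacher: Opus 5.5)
Your proposal is correct and follows essentially the same route as the paper. Both arguments apply Kallenberg's criterion conditionally on $\calP$, on one almost-sure event covering the rational boxes. Both use the independence of $\tau^{(\ell)}_{j_n}$ and the exponential gap $\tau^{(\ell)}_{j_n+1}-\tau^{(\ell)}_{j_n}$, and approximate the Gamma density by Stirling together with Lemma \ref{lem:CRP case}(i), so that the $\Gamma_{\ell_n}-\ell_n$ drift cancels under the centering $\wt\ell_n$. The rest is a Riemann sum in $x$ and avoidance probabilities via $\log(1-q)\approx -q$.

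The only unnecessary part is your tail control for $|x|>M$. The first coordinate $(\ell-\wt\ell_n)/\sqrt{j_n}$ is $\calP$-measurable, so a bounded box only involves the finitely many $\ell$ with $x$ in a compact window, and no estimate outside that window is needed.
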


 \begin{proof}
Write $\esp_\calP(\cdot)\equiv \esp(\cdot\mid\calP)$ and $\proba_\calP(\cdot) \equiv \proba(\cdot\mid\calP)$. We proceed by applying Kallenberg's method \citep[Proposition 3.22]{resnick87extreme}, which says that to prove the desired convergence above it suffices to establish

\begin{align}
\limn \esp_\calP\pp{\wt\xi_n((a,b]\times(v,w]\times[0,z])}& = \mu_\alpha((a,b])(w-v)(1-e^{-z}),\label{eq:K1'}\\
\limn \proba_\calP\pp{\wt\xi_n((a,b]\times(v,w]\times(z,z']) = 0}& = \exp\pp{-\mu_\alpha((a,b])(w-v)(e^{-z}-e^{-z'})},\label{eq:K2''}
\end{align}
for all $-\infty<a<b<\infty, -\infty<v<w<\infty$, and $0< z<z'<\infty$, almost surely. In fact, to apply Kallenberg's method we need to show, with $\nu(\d x\d y\d z) = \mu_\alpha(\d x)\d y e^{-z}\d z$, that (i) $\limn\esp_\calP(\wt\xi_n(E)) = \nu(E)$ and (ii) $\limn\proba_\calP(\wt\xi_n(E) = 0) = e^{-\nu(E)}$ where $E=\bigcup_{r=1}^dE_r$ is a finite disjoint union of rectangular sets $E_r = (a_r,b_r]\times(v_r,w_r]\times (z_r,z'_r], r=1,\dots,d$ (strictly speaking, when $z_r = 0$ the last interval is taken as $[0,z_r']$, and the calculations remain the same). The fact that \eqref{eq:K1'} implies (i) follows from linearity of the expectation. The claim \eqref{eq:K2''} is a special case of (ii). Indeed, with $E = \bigcup_{r=1}^d E_r$ as before, and writing
\[
p_{n,\ell}(E) := \summ r1d \inddd{(\ell-\wt\ell_n)/\sqrt{j_n}\in(a_r,b_r]}\proba_\calP\pp{\tau_{j_n}\topp\ell\in(s_n(v_r),s_n(w_r)],\tau_{j_n+1}\topp\ell-\tau_{j_n}\topp\ell\in\bigg(\frac{nz_r}{j_n},\frac{nz_r'}{j_n}\bigg]},
\]
it suffices to show that 
\[
\proba_\calP\pp{\wt\xi_n(E)=0} = \prod_{\ell=1}^\infty \pp{1-p_{n,\ell}(E)}\to e^{-\nu(E)},
\]
and this last step essentially is the same as in the proof of \eqref{eq:K2''} treated below; in particular, to show the above we need the facts that $\limn\sif\ell1 p_{n,\ell}(E_r)= \nu(E_r)$ and $\limn\sup_{\ell\in\N}p_{n,\ell}(E_r)=0$ for all $r=1,\dots,d$ established below. (Strictly speaking, we shall prove the almost sure statement for all rational end points. And hence over a single probability-one event the above holds for all rational end points, and then by an extension argument the above holds for all real end points.)

We first establish \eqref{eq:K1'}. 
Set $\wb\Gamma_n := \Gamma_n-n$ and 
\[
\wt a_n:=\floor{aj_n^{1/2}-\wb\Gamma_{\ell_n}} \qmand \wt b_n:=\floor{bj_n^{1/2}-\wb\Gamma_{\ell_n}}.
\]
So $\wt\ell_n = \ell_n-\wb\Gamma_{\ell_n}$. 
Recall also $s_n(t) = n(1+t/j_n)_+$. 
We have
\equh\label{eq:mean wt xi_n}
 \esp_\calP \wt\xi_n((a,b]\times(v,w]\times[0,z]) 
 \sim \sum_{k=\wt a_n}^{\wt b_n}\proba_\calP\pp{\tau\topp{\ell_n+k}_{j_n}\in(s_n(v),s_n(w)]}\proba_\calP  \pp{\tau\topp{\ell_n+k}_1\le \frac {nz}{j_n}}.
\eque
Indeed, the two expressions may differ by at most two terms near the end points of the summation $k = \wt a_n$ and $k=\wt b_n$. The fact that these two terms are negligible in the limit can be read from the analysis below, and hence the `$\sim$' relation follows.
 
 Now we examine closely the right-hand side of \eqref{eq:mean wt xi_n}. We need a few estimates in the almost sure sense. First, by the law of iterated logarithm, we know that
$|\Gamma_n-n|\le 2\sqrt{2n\log\log n}$ for all $n$ large enough, almost surely. It thus follows that almost surely,
\[%\equh\label{eq:wt ab}
\sabs{\wt a_n},\sabs{\wt b_n}\le K\sqrt{j_n\log\log n} \equiv k_n \mbox{ for all $n$ large enough}. 
\]%\eque
(The constant $K$ here is $\calP$-measurable.)

By \eqref{eq:diff CRP}, we readily check that
$\proba_\calP\spp{\tau\topp{\ell_n+k}_1> {nz}/{j_n}} ={e^{-p_{\ell_n+k}nz/j_n}} \to e^{-z}$ as $n\to\infty$, and the convergence is in fact uniform for $k$ in the range of summation. That is,
\equh\label{eq:factor z}
\limn \max_{|k|\le 2k_n}\abs{\frac{\proba_\calP(\tau_1\topp{\ell_n+k}>nz/j_n)}{e^{-z}}-1} = 0, \mbox{ almost surely.}
\eque
%\newpage
We also write
\begin{align}
\proba_\calP\pp{\tau\topp{\ell_n+k}_{j_n}\in(s_n(v),s_n(w)]} & = \frac1{\Gamma(j_n)}\int_{np_{\ell_n+k}(1+v/j_n)}^{np_{\ell_n+k}(1+w/j_n)} 
 y^{j_n-1}e^{-y}\d y \nonumber\\
 &= \frac1{\Gamma(j_n)}\int_{vnp_{\ell_n+k}/j_n}^{wnp_{\ell_n+k}/j_n}(np_{\ell_n+k}+y)^{j_n-1}e^{-np_{\ell_n+k}-y}\d y.\nonumber%\label{eq:tau_jn}
 \end{align}
So we shall need a uniform control for $y$ over $[v-\epsilon,w+\epsilon]$ for some small $\epsilon>0$ (because of $np_{\ell_n+k}/j_n\to 1$ as $n\to\infty$), and also for $k$ over $\{\wt a_n,\dots,\wt b_n\}$. For this purpose,
we write $c_n(y,k)\sim_{y,k}d_n(y,k)$ 
if 
\[
\limn \sup_{y\in[v-\epsilon,w+\epsilon]}\max_{k=\wt a_n,\dots,\wt b_n}\abs{\frac{c_n(y,k)}{d_n(y,k)}-1} = 0.
\] Introduce also 
\[
y_n\equiv y_n(y,k) = np_{\ell_n+k}+y.
\]
Now, the goal becomes to prove
\equh
 \frac1{\Gamma(j_n)}\int_{vnp_{\ell_n+k}/j_n}^{wnp_{\ell_n+k}/j_n}y_n^{j_n-1}e^{-y_n}\d y
\sim_{y,k} \frac1{\sqrt{2\pi j_n}}\int_v^w\exp\pp{-\frac{(\wb\Gamma_{\ell_n}+k-\sigma_\alpha y)^2}{2j_n\sigma_\alpha^2}}\d y.
\label{eq:tau_jn asymp}
\eque
It follows from \eqref{eq:diff CRP} that 
\[
\sup_{y\in[v-\epsilon,w+\epsilon]}\max_{|k|\le k_n}\abs{\frac{y_n(y,k)}{j_n}-1} = O\pp{\pp{\frac{\log\log j_n}{j_n}}^{1/2}}.
\]
In particular, $y_n/j_n\sim_{y,k}1$.
 Then, we have
\begin{align}
\frac1{\Gamma(j_n)}y_n^{j_n-1}e^{-y_n}&\sim_{y,k} \frac{j_n}{y_n}\frac1{\sqrt{2\pi j_n}}\pp{\frac{y_n}{j_n}}^{j_n}e^{-(y_n-j_n)} \nonumber\\
&\sim_{y,k} \frac1{\sqrt{2\pi j_n}}e^{j_n \log\pp{1+\frac{y_n-j_n}{j_n}} - (y_n-j_n)}\sim_{y,k}\frac1{\sqrt{2\pi j_n}}e^{-(y_n-j_n)^2/(2j_n)}.\label{eq:plug in}
 \end{align}
 Now, again in view of \eqref{eq:diff CRP}, we have
\[
 \frac{(y_n-j_n)^2}{2j_n}  = \frac{(np_{\ell_n+k}-j_n+y)^2}{2j_n} = \frac1{2j_n}{\displaystyle\pp{y-j_n\frac{\wb\Gamma_{\ell_n}+k}{\alpha \ell_n}+\omega_{n,k}}^2},
\]
 with $\omega_{n,k} = \what O\spp{j_n^{1/4}(\log n)^{1/2}(\log\log j_n)^{1/4}}$. Cleaning up, we write
 \begin{align}
\frac{(y_n-j_n)^2}{j_n}& = \frac1{j_n}\pp{ j_n \frac{\alpha y\ell_n/j_n-(\wb\Gamma_{\ell_n}+k)}{\alpha \ell_n}+\omega_{n,k}}^2\nonumber\\
 & = \frac{j_n}{\alpha^2\ell_n^2}
 \pp{\frac{\alpha y\ell_n}{j_n} - (\wb\Gamma_{\ell_n}+k)}^2 + \frac{2\omega_{n,k}}{\alpha\ell_n}\pp{\frac{\alpha y\ell_n}{j_n}-(\wb \Gamma_{\ell_n}+k)} + \frac{\omega_{n,k}^2}{j_n}\nonumber\\
 & = \frac1{(\sigma_\alpha^2+o(1))j_n}\pp{\wb\Gamma_{\ell_n}+k-(\sigma_\alpha+o(1))y}^2 + \what O\pp{\omega_{n,k}(\log\log n/j_n)^{1/2}}\nonumber\\
 & = \frac1{\sigma_\alpha^2j_n}\pp{\wb\Gamma_{\ell_n}+k-\sigma_\alpha y}^2 +\what O\pp{\omega_{n,k}(\log\log n/j_n)^{1/2}}.\label{eq:square}
 \end{align}
Plugging the above into \eqref{eq:plug in}, we have shown
\equh\label{eq:Poisson pmf}
\frac1{\Gamma(j_n)}y_n^{j_n-1}e^{-y_n}\sim_{y,k}\frac1{\sqrt{2\pi j_n}}\exp\pp{-\frac1{2j_n\sigma_\alpha^2}\pp{\wb \Gamma_{\ell_n}+k-\sigma_\alpha y}^2},
\eque
and hence we have established \eqref{eq:tau_jn asymp}.

Now, applying \eqref{eq:factor z} and \eqref{eq:tau_jn asymp} to \eqref{eq:mean wt xi_n}, we have
\begin{align*}
 \esp_\calP &\wt\xi_n((a,b]\times(v,w]\times[0,z]) \nonumber\\
&\sim \frac1{\sqrt{2\pi j_n}}\int_{\wt a_n}^{\wt b_n+1}\int_v^w\exp\pp{-\frac{(\wb\Gamma_{\ell_n}+x-\sigma_\alpha y)^2}{2j_n\sigma_\alpha^2}}\d y\d x(1-e^{-z})\nonumber\\
& \sim\frac1{\sqrt{2\pi}}\int_{(\wt a_n+\wb \Gamma_{\ell_n})/j_n^{1/2}}^{(\wt b_n+\wb \Gamma_{\ell_n}+1)/j_n^{1/2}}\int_v^w\exp\pp{-\frac{(xj_n^{1/2}-\sigma_\alpha y)^2}{2j_n\sigma_\alpha^2}}\d y\d x(1-e^{-z})\nonumber\\
& \to \frac1{\sqrt{2\pi}}\int_a^b\int_v^w e^{-x^2/(2\sigma_\alpha^2)}\d y\d x(1-e^{-z}) = \mu_\alpha((a,b])(w-v)(1-e^{-z}),
\end{align*}
almost surely. We have obtained \eqref{eq:K1'}.

It remains to show \eqref{eq:K2''}. We first rewrite 
\begin{multline*}
 \proba_\calP \pp{ \wt\xi_n((a,b]\times(v,w]\times(z,z'])=0}\\
  \sim \prod_{k=\wt a_n}^{\wt b_n}\proba_\calP\pp{\tau\topp{\ell_n+k}_{j_n}\notin(s_n(v),s_n(w)], \mbox{ or } \tau\topp{\ell_n+k}_{j_n+1}-\tau\topp{\ell_n+k}_{j_n}\notin\bigg( \frac {nz}{j_n},\frac{nz'}{j_n}\bigg]} =: \prod_{k=\wt a_n}^{\wt b_n}q_{n,k}(v,w),
\end{multline*}
and furthermore 
\begin{align*}
q_{n,k}(v,w)&=\proba_\calP\pp{\tau\topp{\ell_n+k}_{j_n}\notin(s_n(v),s_n(w)]}\\
& \quad +\proba_\calP\pp{\tau\topp{\ell_n+k}_{j_n}\in(s_n(v),s_n(w)]}\proba_\calP\pp{\tau\topp{\ell_n+k}_1\notin\bigg( \frac {nz}{j_n},\frac{nz'}{j_n}\bigg]}\\
& =   1-\proba_\calP\pp{\tau\topp{\ell_n+k}_{j_n}\in(s_n(v),s_n(w)]}\proba_\calP\pp{\tau\topp{\ell_n+k}_1\in\bigg( \frac {nz}{j_n},\frac{nz'}{j_n}\bigg]}.
\end{align*}
Using $\log(1-x)\sim -x$ as $x\to 0$ and an analysis similar to the one above, we have this time
\begin{align*}
\prod_{k=\wt a_n}^{\wt b_n}q_{n,k}(v,w)
&\sim\exp\pp{-\sum_{k=\wt a_n}^{\wt b_n}\proba_\calP\pp{\tau\topp{\ell_n+k}_{j_n}\in(s_n(v),s_n(w)]}\proba_\calP\pp{\tau\topp{\ell_n+k}_1\in\bigg( \frac {nz}{j_n},\frac{nz'}{j_n}\bigg]}}\\
&\to \exp \pp{-\mu_\alpha((a,b])(w-v)(e^{-z}-e^{-z'})},
\end{align*}
as $n\to\infty$. 
We have proved \eqref{eq:K2''} and completed the proof. 
\end{proof}
\subsection{De-Poissonization}\label{sec:de-Poissonization}

Now we prove Theorem \ref{thm:PP conv}. We translate the results on $\wt\xi_n$ based on the Poissonized model  in Proposition \ref{prop:PP conv} to $\xi_n$ (based on the original Karlin model). The two models can be naturally coupled and the difference is asymptotically negligible in a sense to be made precise. This step is usually referred to as the de-Poissonization. 
\begin{proof}[Proof of Theorem \ref{thm:PP conv}]
We recall some notation. Let $(\calN_\ell)_{\ell\in\N}$ be the Poisson processes in the Poissonization, $\calN(t) = \sif\ell1\calN_\ell(t)$. Here $(\calN_\ell)_{\ell\in\N}$ are conditionally independent Poisson processes (in fact, Cox processes) with respective random parameters $(p_\ell = d_0\Gamma_\ell^{-1/\alpha})_{\ell\in\N}$, given $\calP$, and $\calN$ is always a standard Poisson process.   Introduce
$\what\tau(n) = \inf\{t>0:\calN(t) = n\}$. 
Recall that by coupling, we can assume
\[
\pp{K_{n,\ell}}_{\ell\in\N} = \pp{\wt \calN_\ell(\what\tau(n))}_{\ell\in\N}
\]
almost surely. Recall $T_j\topp\ell = \min\{m\in\N:K_{m,\ell} = j\}$.  By coupling we now have $\calN(\tau_j\topp\ell) = T_j\topp\ell$. 
Therefore, now we can compare
\begin{align*}
\wt\xi_n &=\sif\ell1 \ddelta{\pp{\ell-\wt\ell_n}/j_n^{1/2}, j_n\pp{\tau_{j_n}\topp \ell/n-1}, j_n\pp{\tau\topp \ell_{j_n+1}-\tau\topp\ell_{j_n}}/n},\\
\xi_n &=\sif\ell1 \ddelta{\pp{\ell-\wt\ell_n}/j_n^{1/2}, j_n\pp{T_{j_n}\topp \ell/n-1}, j_n\pp{T\topp \ell_{j_n+1}-T\topp\ell_{j_n}}/n}\\
& = \sif\ell1 \ddelta{\pp{\ell-\wt\ell_n}/j_n^{1/2}, j_n\pp{\calN\pp{\tau_{j_n}\topp\ell}/n-1}, j_n\pp{\calN\pp{\tau_{j_n+1}\topp\ell}-\calN\pp{\tau_{j_n}\topp\ell}}/n}.
\end{align*}

Morally, replacing $\tau_{j_n}\topp\ell$ in $\wt\xi_n$ by $\calN(\tau_{j_n}\topp\ell)$ to obtain $\xi_n$ does not affect the limiting point process. In practice, we establish \eqref{eq:K1'} and \eqref{eq:K2''} with $\wt\xi_n$ replaced by $\xi_n$. We first compare $\esp\wt\xi_n(E)$ and $\esp\xi_n(E)$ with $E = [a,b]\times[v,w]\times[0,z]$.  Recall that
\[
\esp_\calP\wt\xi_n(E) \sim \sum_{k=\wt a_n}^{\wt b_n}\proba_\calP\pp{\tau_{j_n}\topp{\ell_n+k}\in(s_n(v),s_n(w)],\tau_{j_n+1}\topp{\ell_n+k}-\tau_{j_n}\topp{\ell_n+k}\in\bb{0,\frac{nz}{j_n}}},
\] 
and the corresponding expression for $\esp\xi_n(E)$ is obtained from  the right-hand side above by replacing $\tau_j\topp\ell$ by $\calN(\tau_j\topp\ell)$. Consider the events
\[
\Omega_n:=\ccbb{\sup_{t\in[0,2n]}\abs{\calN(t)-t}\le\sqrt{n\log n}} \qmand \wt\Omega_{n,\ell} := \ccbb{\tau_{j_n+1}\topp\ell\le 2n}.
\]
Now fix $\epsilon\in(0,\min\{(w-v)/2,z/2\})$. Notice that since $n/j_n\gg \sqrt{n\log n}$,  there exists a deterministic $n_0$ such that for all  $n>n_0$, under $\Omega_n\cap\wt\Omega_{n,\ell}$ we have $|\calN(\tau_{k}\topp \ell) - \tau_{k}\topp\ell|\le \epsilon n/j_n$ for $k=j_n,j_n+1$, and hence
\begin{multline*}
\ccbb{\tau_{j_n}\topp\ell\in(s_n(v+\epsilon),s_n(w-\epsilon)],\tau_{j_n+1}\topp\ell - \tau_{j_n}\topp\ell\le\frac{n(z-2\epsilon)}{j_n}}\cap\Omega_n\cap\wt\Omega_{n,\ell}
\\
\subset \ccbb{\calN(\tau_{j_n}\topp\ell)\in(s_n(v),s_n(w)], \calN(\tau_{j_n+1}\topp\ell) - \calN(\tau_{j_n}\topp\ell)\le\frac{nz}{j_n}}\cap \Omega_n\cap\wt\Omega_{n,\ell}\\
\subset\ccbb{\tau_{j_n}\topp\ell\in(s_n(v-\epsilon),s_n(w+\epsilon)],\tau_{j_n+1}\topp\ell - \tau_{j_n}\topp\ell\le\frac{n(z+2\epsilon)}{j_n}}.
\end{multline*}
Therefore, with
\begin{align*}
E_+(\epsilon) &:= (a,b]\times (v-\epsilon,w+\epsilon]\times [0,z+2\epsilon],\\
E_-(\epsilon) &:= (a,b]\times (v+\epsilon,w-\epsilon]\times [0,z-2\epsilon],
\end{align*}
we have
\begin{multline*}
\esp_\calP\wt\xi_n(E_-(\epsilon))-\sum_{k=\wt a_n}^{\wt b_n}\pp{\proba_\calP(\Omega_n^c)+\proba_\calP(\wt\Omega_{n,\ell_n+k}^c)} \\
\le \esp_\calP\xi_n(E)
\le\esp_\calP\wt\xi_n(E_+(\epsilon))+\sum_{k=\wt a_n}^{\wt b_n}\pp{\proba_\calP(\Omega_n^c)+\proba_\calP(\wt \Omega_{n,\ell_n+k}^c)}.
\end{multline*}
Recall the exponential inequality $\proba(\sup_{t\in[0,n]}|\calN(t)-t|>u)\le 2e^{-u^2/(2n+2u/3)}$. (We could not locate a precise reference, but this should be well known. See for example \citep[Chapter II, Proposition 1.8]{revuz99continuous} for a proof of the exponential maximal inequality for Brownian motion. The same strategy applies here: for the upper bound of $\proba(\sup_{t\in[0,n]}\calN(t)-t>u)$ investigate the equivalent probability for the  exponential martingales $(e^{\lambda \calN(t)- t(e^\lambda-1)})_{t\ge 0}$, apply Doob's maximal inequality, and optimize $\lambda$ to obtain the claimed upper bound;  repeat the same on $t-\calN(t)$ for a lower bound.) Then, 
\[
\proba_\calP(\Omega_n^c)\le 2e^{-\frac 14\log n+ o(1)},
\]
and $\limn\sum_{k=\wt a_n}^{\wt b_n} \proba_\calP(\Omega_n^c)= 0$ (this is because $\wt b_n-\wt a_n = O(j_n^{1/2}) = O(n^{\alpha/(2(1+\alpha))}) = o(n^{1/4})$ as $\alpha\in(0,1)$). It is also clear that $\limn\sum_{k=\wt a_n}^{\wt b_n}\proba_\calP(\wt\Omega_{n,\ell_n+k}^c)= 0$. 
Hence,
\[
\limn\esp_\calP\wt\xi_n(E_-(\epsilon))\le \liminfn \esp_\calP \xi_n(E)\le\limsupn\esp_\calP\xi_n(E)\le\limsupn\esp_\calP\wt\xi_n(E_+(\epsilon)).
\]
Letting $\epsilon\downarrow 0$ we have proved that 
\[
 \limn \esp_\calP \xi_n(E) = \limn\esp_\calP\wt\xi_n(E) = \mu_\alpha([a,b])(w-v)(1-e^{-z}),
\]
almost surely.

Next, we compare $\proba_\calP(\xi_n(E) = 0)$ and $\proba_\calP(\wt\xi_n(E) = 0)$. This time for $n$ large enough,
\begin{multline*}
\bigcap_{k=\wt a_n}^{\wt b_n}\ccbb{\tau_{j_n}\topp{\ell_n+k}\notin(s_n(v-\epsilon),s_n(w+\epsilon)]}\cap\wt\Omega_{n,\ell_n+k}\cap\Omega_n
\\
\subset\bigcap_{k=\wt a_n}^{\wt b_n}\ccbb{\calN(\tau_{j_n}\topp{\ell_n+k})\notin(s_n(v),s_n(w)]} \cap\wt\Omega_{n,\ell_n+k}\cap\Omega_n\\
\subset
\bigcap_{k=\wt a_n}^{\wt b_n}\ccbb{\tau_{j_n}\topp{\ell_n+k}\notin(s_n(v+\epsilon),s_n(w-\epsilon)]},
\end{multline*}
and a similar relation holds with the third coordinates of the point processes involved. 
Taking the limit $n\to\infty$ and then $\epsilon\downarrow 0$ it follows that
\[
\limn\proba_\calP\pp{\xi_n(E) = 0} = \limn\proba_\calP\pp{\wt\xi_n(E) = 0} = \exp\pp{-\mu_\alpha((a,b])\times(w-v)\times(1-e^{-z})}.
\]
We have explained the sandwich argument for $E= (a,b]\times(v,w]\times[0,z]$. For more general $E$ as a disjoint union of finite rectangles, the argument can be accordingly adapted. 
We have completed the proof of Theorem \ref{thm:PP conv}.\end{proof}
\subsection{Proof of Theorem \ref{thm:discrete-cmt}}
\label{sec:CMT}
%\begin{proof}%[Proof of Theorem \ref{thm:discrete-cmt}]

The proof follows essentially by a continuous mapping argument. To do so, however, one has to restrict to a compact domain and proceed with an approximation. A key step is needed for this approximation and is proved in Lemma \ref{lem:A} at the end.
\begin{proof}[Proof of Theorem \ref{thm:discrete-cmt}]
For the convergence of the $\calM_{p,f}(\R)$-valued process \eqref{eq:Z_n},  we shall show that for every $T>0$, 
\[
\calZ\topp T_n:=(\mathcal Z_n(t))_{t\in[-T,T]}\aswto(\mathcal Z(t))_{t\in[-T,T]}=:\calZ\topp T
\]
with respect to $\calP$
in $D([-T,T],\mathcal M_{p,f}(\mathbb R))$ with the $J_1$ topology. 
Fix $T>0$. For $K>0$ and $\xi=\sum_i\delta_{(x_i,y_i,z_i)}$, define
\[
\bb{\Theta_K^{(T)}(\xi)}(t):=\sum_i\mathbf 1_{\{|x_i|\leq K\}}\mathbf 1_{\{z_i\leq K\}}\mathbf 1_{\{y_i\leq t<y_i+z_i\}}\delta_{x_i},\qquad t\in[-T,T].
\]
The map depends only on the restriction of $\xi$ to the compact set
\[
\Omega_K\topp T:=[-K,K]\times[-T-K,T]\times[0,K].
\]
We claim that $\Theta_K^{(T)}$ is continuous at $\xi$ almost surely. Indeed, almost surely, $\xi$ has no point on $\partial \Omega_K\topp T$ and hence has only finitely many points in $\Omega_K\topp T$. Mark these points by $(x_i,y_i,z_i)$, $i=1,\dots,m$. Then, $\{[\Theta_K\topp T(\xi)](t)\}_{t\in[-T,T]}$ is piecewise constant, and its jumps occur exactly at  $\{y_1,\dots,y_m,y_1+z_1,\dots,y_m+z_m\}\cap[-T,T]$. Moreover, almost surely these points are distinct and none of them is equal to $-T$ or $T$. Under these assumptions, it is clear that the mapping is continuous at $\xi$. We omit the details. 

Hence the continuous mapping theorem gives, for every fixed $K>0$, 
\begin{equation}\label{eq:truncated-convergence}
\calZ_n\topp{T,K}:=\Theta_K^{(T)}(\xi_n)\aswto\Theta_K^{(T)}(\xi)=:\calZ\topp{T,K},
\end{equation}
 with respect to $\calP$ in $D([-T,T],\mathcal M_{p,f}(\mathbb R))$ as $n\to\infty$.

Define
\[
X_{n,\ell}:=\frac{\ell-\widetilde\ell_n}{\sqrt{j_n}},\qquad Y_{n,\ell}:=j_n\left(\frac{T_{j_n}^{(\ell)}}n-1\right),\qquad Z_{n,\ell}:=\frac{j_n}{n}\left(T_{j_n+1}^{(\ell)}-T_{j_n}^{(\ell)}\right),
\]
and rewrite
\[
\mathcal Z_n(t)=\sum_{\ell\geq1}\delta_{X_{n,\ell}}\mathbf 1_{\{K_{\floor{s_n(t)},\ell}=j_n\}}=\sum_{\ell\geq1}\delta_{X_{n,\ell}}\mathbf 1_{\{Y_{n,\ell}\leq t<Y_{n,\ell}+Z_{n,\ell}\}}.
\]
We notice that
$\mathcal Z_n\topp {T,K}
=\mathcal Z_n\topp T
$ on $(A_{n,K,T}\cup B_{n,K,T})^c$,
with 
\begin{align}
A_{n,K,T}&:=\ccbb{\exists\ell\in\N: |X_{n,\ell}|>K,[Y_{n,\ell},Y_{n,\ell}+Z_{n,\ell})\cap[-T,T]\ne\emptyset},\label{eq:A def}\\
B_{n,K,T}&:=\ccbb{\exists\ell\in\N:|X_{n,\ell}|\leq K, Z_{n,\ell}>K,[Y_{n,\ell},Y_{n,\ell}+Z_{n,\ell})\cap[-T,T]\ne\emptyset}.\label{eq:B def}
\end{align}
A key step in the proof is to show that 
\equh
\lim_{K\to\infty}\limsup_{n\to\infty}\mathbb P_{\calP}
\left(\calZ_n\topp{T,K}\ne\mathcal Z_n\topp T\right)
\leq\lim_{K\to\infty}\limsup_{n\to\infty}
\mathbb P_{\calP}\left(A_{n,K,T}\cup B_{n,K,T}\right)=0,
\label{eq:discrete-approximation}
\eque
where the last convergence follows from Lemma \ref{lem:A} below.
We also have 
\begin{equation}\label{eq:limit-approximation}
\lim_{K\to\infty}\mathbb P_\calP\left(\calZ\topp{T,K}\ne \mathcal Z\topp T\right)=0,
\end{equation}
which implies in particular that $\calZ\topp{T,K}\aswto \calZ\topp T$ as $K\to\infty$ with respect to $\calP$. 
Indeed, the expected number of discarded intervals intersecting $[-T,T]$ is
\begin{multline*}
(2T+1)\mu_\alpha(\{x:|x|>K\})+\mu_\alpha([-K,K])\int_K^\infty(2T+z)e^{-z}\d z\\
=(2T+1)\mu_\alpha(\{x:|x|>K\})+(K+1+2T)e^{-K}\mu_\alpha([-K,K]),
\end{multline*}
which tends to zero. 

Together, \eqref{eq:truncated-convergence},  \eqref{eq:discrete-approximation}, and \eqref{eq:limit-approximation} yield $\calZ_n\topp T\aswto\calZ\topp T$ with respect to $\calP$ as $n\to\infty$, which is Theorem \ref{thm:discrete-cmt} on $[-T,T]$. Taking $T$ through the positive integers gives the asserted convergence in the local $J_1$ topology on $D(\mathbb R,\mathcal M_{p,f}(\mathbb R))$.

It remains to prove the convergence to the $M/M/\infty$ queue in \eqref{eq:M/M/infty}.
To see this, consider the continuous map
$F:\mathcal M_{p,f}(\mathbb R)\to\mathbb R_+$ given by $F(\xi) = \xi(\R)$. 
The induced map
\[
D(\mathbb R,\mathcal M_{p,f}(\mathbb R))
\ni \zeta\mapsto \bigl(F(\zeta(t))\bigr)_{t\in\mathbb R}
\in D(\mathbb R)
\]
is continuous under the local $J_1$ topologies.
Since
\[
F(\mathcal Z_n(t))=C_{j_n}(\floor{s_n(t)}),
\]
the first part of the theorem and the continuous mapping theorem then yield that
\[
\left(C_{j_n}(\floor{s_n(t)})\right)_{t\in\mathbb R}\aswto\left(F(\mathcal Z(t))\right)_{t\in\mathbb R}
\]
with respect to $\calP$ in $D(\mathbb R)$ with the local $J_1$ topology. After integrating out the first coordinate of $\xi$, given $\calP$ the birth times form a Poisson process of rate $\mu_\alpha(\mathbb R)=\sigma_\alpha$, and the lifetimes are independent exponential random variables with mean one. Thus $(F(\mathcal Z(t)))_{t\in\mathbb R}$ is the stationary $M/M/\infty$ queue with arrival rate $\sigma_\alpha\equiv \sigma_{\alpha,r}$ and service rate one, which is exactly $(\calC_{\alpha,r}(t))_{t\in\R}$. 
\end{proof}

The key estimates needed in the proof above are established in the next lemma. %Throughout, $\ell$ denotes an urn label, whereas $k$ denotes the integer shift in $\ell=\ell_n+k$. 
Recall the definition of $A_{n,K,T}$ and $B_{n,K,T}$ in \eqref{eq:A def} and \eqref{eq:B def}.
\begin{lemma}\label{lem:A}
 For every $T>0$, 
\begin{align}\label{eq:A}
\lim_{K\to\infty}&\limsup_{n\to\infty}\mathbb P_{\calP}(A_{n,K,T})=0, \\
\label{eq:B}
\lim_{K\to\infty}&\limsup_{n\to\infty}\mathbb P_{\calP}(B_{n,K,T})=0,
\end{align}
almost surely.
\end{lemma}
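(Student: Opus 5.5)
The plan is to bound both probabilities by expected counts and then reduce everything to Poisson/Gamma estimates for individual urns, using the local expansion of Lemma \ref{lem:CRP case}. By the union bound,
\[
\proba_\calP(A_{n,K,T})\le\sum_{\ell:|X_{n,\ell}|>K}\proba_\calP\pp{[Y_{n,\ell},Y_{n,\ell}+Z_{n,\ell})\cap[-T,T]\neq\emptyset}.
\]
The event inside means that urn $\ell$ contains exactly $j_n$ balls at some time $m\in[\floor{s_n(-T)},\floor{s_n(T)}]$. I would first Poissonize, using the coupling of Section \ref{sec:de-Poissonization}: on $\Omega_n$ the time distortion is $o(n/j_n)$, so it suffices to bound the same event for $\calN_\ell$ on the slightly larger window $[s_n(-T-1),s_n(T+1)]$. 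The probabilities $\proba_\calP(\Omega_n^c)$ are polynomially small and can be absorbed, after treating separately the $\ell$ with $np_\ell$ far from $j_n$ (see below). For the Poissonized urn, the event is contained in
\[
\ccbb{\calN_\ell(s_n(-T-1))\le j_n\le \calN_\ell(s_n(T+1))}.
\]
Its probability is at most $\proba_\calP(\calN_\ell(s_n(-T-1))=j_n)+\proba_\calP(\tau^{(\ell)}_{j_n}\in(s_n(-T-1),s_n(T+1)])$. The second term was computed in \eqref{eq:tau_jn asymp}, and the first is a Poisson point mass of the same Gaussian form, up to a factor $O(1)$ coming from the exponential holding time.

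For $\ell=\ell_n+k$ with $|k|\le k_n=K'\sqrt{j_n\log\log n}$ (where $K'$ is large, random and $\calP$-measurable), the Gaussian approximations \eqref{eq:Poisson pmf} and \eqref{eq:tau_jn asymp}, which rest on \eqref{eq:diff CRP}, give a bound
\[
\frac{C(T)}{\sqrt{j_n}}\exp\pp{-\frac{(\wb\Gamma_{\ell_n}+k)^2}{4\sigma_\alpha^2 j_n}}.
\]
Summing over $\wb\Gamma_{\ell_n}+k$ with $|\wb\Gamma_{\ell_n}+k|>K\sqrt{j_n}$ gives at most $C(T)\mu'(\{|x|>K\})$, where $\mu'$ is a Gaussian measure. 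This vanishes as $K\to\infty$, uniformly in $n$.

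For $|k|>k_n$, which is the main obstacle, I would avoid Gaussian asymptotics and use monotonicity together with Chernoff bounds instead. By \eqref{eq:diff CRP} at $|k|=k_n$, together with the monotonicity of $p_\ell$ in $\ell$, every such $\ell$ satisfies $|np_\ell-j_n|\ge c\,k_n j_n/\ell_n\ge c'\sqrt{j_n\log\log n}$. Chernoff bounds for Poisson variables then show that $\proba(\calN_\ell(s)=j_n$ for some $s$ in the window$)\le C\exp(-c(np_\ell-j_n)^2/(np_\ell\vee j_n))$. For $\ell$ far away (say $np_\ell>2j_n$ or $np_\ell<j_n/2$) the bound becomes $e^{-c j_n}$ or $e^{-c\,np_\ell}$ respectively. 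The sum over small $\ell$ is then at most $\ell_n e^{-cj_n}\to0$. For large $\ell$ I would sum $\sum_\ell \proba(\calN_\ell(2n)\ge j_n)$, which is dominated by $C\ell_n e^{-cj_n}$ because $p_\ell$ decays like $\ell^{-1/\alpha}$. In the intermediate range, the number of $\ell$ is $O(\ell_n)$, and each term is $\exp(-c(k/\sqrt{j_n})^2)$ with $|k|\ge k_n$. Taking $K'$ large gives a total of $O(\ell_n(\log n)^{-cK'^2})\to0$, or else I would use the linear growth of $np_\ell-j_n$ in $k$ to get a summable Gaussian tail. This proves \eqref{eq:A}.

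For \eqref{eq:B}, the $\ell$ with $|X_{n,\ell}|\le K$ are $O(\sqrt{j_n})$ urns around $\wt\ell_n$. The event requires $T^{(\ell)}_{j_n}\le s_n(T)$ and an interval of length more than $K$ meeting $[-T,T]$. This is contained in the union of two events. The first is that the interval starts in $[-T-K',T]$, with the holding time $Z_{n,\ell}>K$. The second is that the interval starts before $-T-K'$, with $Z_{n,\ell}>K'$. Conditional on the start, $Z_{n,\ell}$ is approximately exponential with rate $np_\ell n/j_n\cdot(1/n)\to1$, uniformly by \eqref{eq:factor z}, and it is independent of the start by the strong Markov property. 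Using the intensity computation from \eqref{eq:K1'}, the expected count of the first event tends to $\mu_\alpha([-K,K])(T+K')e^{-K}$. For the second, $\sum_{m\ge0}$ over unit start windows $[-T-K'-m-1,-T-K'-m]$ gives $\sum_m e^{-(K'+m)}\mu_\alpha(\R)$. Here I need the uniform-in-window version of \eqref{eq:tau_jn asymp}, which holds for windows with $|v|\le\sqrt{j_n}$; windows further out are handled by Chernoff. Choosing $K'=K/2$ and letting $K\to\infty$ gives \eqref{eq:B}. The de-Poissonization is handled by the same sandwich with $\Omega_n$ as in Section \ref{sec:de-Poissonization}.
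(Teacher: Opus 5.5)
The gap is in the intermediate range $k_n<|\ell-\ell_n|$ with $np_\ell\in[j_n/2,2j_n]$. You correctly flag this as the main obstacle, and it is where the paper's proof does its real work (the sets $I_{n,1}$ and $I_{n,1,2}$).

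\textbf{What fails.} Monotonicity plus \eqref{eq:diff CRP} at $|k|=k_n$ gives only a separation $|np_\ell-j_n|\ge c'\sqrt{j_n\log\log n}$ that does not grow with $|k|$. Your Chernoff bound then gives each term only $(\log n)^{-cK'^2}$; even a local bound would give only $j_n^{-1/2}(\log n)^{-cK'^2}$. This range contains $\asymp\ell_n\asymp n^{\alpha/(1+\alpha)}$ urns, so the claim $O(\ell_n(\log n)^{-cK'^2})\to0$ is false: a fixed power of $\log n$ cannot beat a power of $n$. Your fallback fails as stated too. Even granting $|np_\ell-j_n|\ge c|k|$, the Chernoff tail bound $e^{-ck^2/j_n}$ summed over $|k|>k_n$ is of order $\sqrt{j_n}\int_{K'\sqrt{\log\log n}}^\infty e^{-cx^2}\d x$. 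That is $\sqrt{j_n}$ times a polylogarithmically small factor, which still diverges.

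\textbf{What is needed.} Two ingredients are needed together.
\begin{enumerate}[(a)]
\item A local bound that keeps the factor $j_n^{-1/2}$. During the window, urn $\ell$ receives only $O(1)$ balls, so the event forces $\calN_\ell(s_n(-T))$ to lie within $O(1)$ of $j_n$. Then \eqref{eq:gamma-bound} gives \eqref{eq:local-tail-bound}.
\item A separation growing linearly in $|\ell-\wt\ell_n|$ over the whole range up to $\delta\ell_n$. Lemma \ref{lem:CRP case} does not provide this beyond $|k|\le K\sqrt{j_n\log\log n}$ for fixed $K$. The paper obtains it from a Taylor expansion of $d_0n\Gamma_\ell^{-1/\alpha}$ around $\ell_n$, together with $\max_{|\ell-\ell_n|\le2\delta\ell_n}|\wb\Gamma_\ell-\wb\Gamma_{\ell_n}|\le C\sqrt{\ell_n\log\log n}$ (see \eqref{eq:gamma incre}). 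This fluctuation is a small fraction of $|\ell-\wt\ell_n|$ once $|\ell-\wt\ell_n|>K_0\sqrt{\ell_n\log\log n}$ with $K_0$ large. If you only need some positive slope rather than the exact $\sigma_\alpha^{-1}$, the law of the iterated logarithm for $\wb\Gamma_\ell$ alone suffices.
\end{enumerate}
With both in hand, the sum becomes a Riemann sum bounded by $C\int_K^\infty e^{-cx^2}\d x$.

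\textbf{Smaller points.} Your de-Poissonization for \eqref{eq:A} does not work as written. With the $\Omega_n$ of Section \ref{sec:de-Poissonization} one has only $\proba_\calP(\Omega_n^c)\lesssim n^{-1/4}$. A union bound over the $\asymp\ell_n$ urns with $np_\ell$ comparable to $j_n$ then does not vanish when $\alpha\ge1/3$. The paper instead takes $\Omega_n=\{\sup_{0\le u\le2n}|\calN(u)-u|\le n/j_n\}$, whose complement has probability at most $Ce^{-cn/j_n^2}$. It combines this with the deterministic fact that at most $s_n(T)/j_n$ urns can reach $j_n$ balls by time $s_n(T)$, so the error is at most $(s_n(T)/j_n)\proba_\calP(\Omega_n^c)$.

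In the large-$\ell$ range, $\calN_\ell(2n)$ should be $\calN_\ell(s_n(T+1))$. For $np_\ell$ just below $j_n/2$ one has $\proba(\calN_\ell(2n)\ge j_n)\approx 1/2$, so the sum would not be small.

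Your argument for \eqref{eq:B} is sound: you decompose by start time and use that the holding time is independent of the start and approximately exponential. The paper's route is shorter. For $K>2T$, a long interval meeting $[-T,T]$ must contain $-T$ or $T$. Splitting at age or residual life $K/2$ then reduces everything to occupation counts at four fixed times via \eqref{eq:central-mass}, giving the bound $4\mu_\alpha(\R)e^{-K/4}$.
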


\begin{proof}%[Proof of Lemma \ref{lem:A}]
We first prove \eqref{eq:A}. For this purpose, introduce
\[
q^*_{n,\ell}(T):=\mathbb P_{\calP}\left(
[Y_{n,\ell},Y_{n,\ell}+Z_{n,\ell})\cap[-T,T]\ne\emptyset
\right).
\]
It suffices to prove that
\[
\lim_{K\to\infty}\limsup_{n\to\infty}
\sum_{\substack{\ell\in\N\\|\ell-\wt\ell_n|>K\sqrt{j_n}}}q^*_{n,\ell}(T)=0, \mbox{ almost surely.}
\]

This is an almost sure event. It suffices to prove the assertions when $\theta=0$; the general case follows from part (ii) of Lemma \ref{lem:P_j}. 

Let $(\calN_\ell)_{\ell\geq1}$ be the conditionally independent Poisson processes (given $\calP$) with respective rates $(p_\ell)_{\ell\geq1}$ of the Poissonized model. Let $\tau_j^{(\ell)}$ be the $j$-th arrival time of $\calN_\ell$. For $T>0$, set
\[
\widetilde q^*_{n,\ell}(T):=\mathbb P_{\calP}\left(
\tau_{j_n}^{(\ell)}\leq s_n(T),
\ \tau_{j_n+1}^{(\ell)}>s_n(-T)\right)
\]
as an approximation to $q_{n,\ell}^*(T)$. 
We first show that
\begin{equation}\label{eq:poissonized-localization}
\lim_{K\to\infty}\limsup_{n\to\infty}
\sum_{\substack{\ell\in\N\\|\ell-\wt\ell_n|>K\sqrt{j_n}}}\widetilde q^*_{n,\ell}(T)=0.
\end{equation}
Write 
\[
\ccbb{\ell\in\N:|\ell-\wt\ell_n|>K\sqrt{j_n}} = \bigcup_{r=1}^3 I_{n,r}
\]
for $n$ large enough (so that $K\sqrt{j_n}\le \delta \ell_n$) 
with, for some $\delta,M>0$,
\begin{align*}
I_{n,1}\equiv I_{n,1}(\delta)&:=\ccbb{\ell\in\N:K\sqrt{j_n}< |\ell-\wt\ell_n|\le \delta\ell_n},\\
I_{n,2}\equiv I_{n,2}(\delta,M)&:=\ccbb{\ell\in\N:\delta\ell_n< |\ell-\wt\ell_n|\le M\ell_n},\\
I_{n,3}\equiv I_{n,3}(M)&:=\ccbb{\ell\in\N:|\ell-\wt\ell_n|> M\ell_n}.
\end{align*}
We shall bound $\wt q_{n,\ell}^*(T)$ over the different intervals respectively.

We start with $\ell\in I_{n,1}$. 
For every fixed $K,T>0, \epsilon\in(0,1)$, by taking $\delta>0$ small enough, we have that there exists a $\calP$-measurable random variable $n_0$ such that for all $n\ge n_0$:
\equh\label{eq:separation}
(1-\epsilon)j_n\leq s_n(-T)p_\ell\leq s_n(T)p_\ell\leq (1+\epsilon)j_n,
\eque
and
\begin{align}
s_n(t)p_\ell<j_n-(1-\epsilon)\frac{\sabs{\ell-\wt\ell_n}}{\sigma_\alpha}, & \mbox{ if } \ell-\wt\ell_n>K\sqrt{j_n},\label{eq:separation1}
\\
s_n(t)p_\ell>j_n+(1-\epsilon) \frac{|\ell-\wt\ell_n|}{\sigma_\alpha},  & \mbox{ if } \ell-\wt\ell_n<-K\sqrt{j_n},\label{eq:separation2}
\end{align}
for all $\ell\in I_{n,1}$, $|t|\le T$.

We prove \eqref{eq:separation1} and \eqref{eq:separation2}. The claim \eqref{eq:separation} is relatively easy and actually can also be read from the proof of the other claims below. 
Set 
\[%\equh\label{eq:k_n}
k_n\equiv k_n(K_0):=K_0\sqrt{\ell_n\log\log n},
\]%\eque
for a constant $K_0$ to be specified later.
First, we consider
\[%\equh\label{eq:I_n,11}
I_{n,1,1}:=\ccbb{\ell\in\N: K\sqrt{j_n}\le \abs{\ell-\wt\ell_n}\le k_n}.
\]%\eque
Notice that $|\ell-\wt\ell_n|\le k_n$ implies that $|\ell-\ell_n|\le Ck_n$ for some constant $C>0$. Therefore, \eqref{eq:diff CRP} and the fact that $j_n/(\alpha\ell_n)\to\sigma_\alpha\inv$ yield
\[
\limn\max_{\ell\in I_{n,1,1}}\abs{\frac{np_\ell-j_n}{\ell-\wt\ell_n} +\sigma_\alpha\inv} = 0. 
\]
Note also that $(s_n(t)-n)p_\ell = (tn/j_n)p_\ell$ is of order $O(1)$ uniformly for all $\ell\in I_{n,1,1}$. We have thus proved \eqref{eq:separation}, \eqref{eq:separation1}, and \eqref{eq:separation2} for all $\ell\in I_{n,1,1}$. 

Next, we consider
\[%\equh\label{eq:I_n12}
I_{n,1,2}\equiv I_{n,1,2}(K_0,\delta):=\ccbb{\ell\in\N: k_n<\abs{\ell-\wt\ell_n}\le \delta\ell_n},
\]%\eque
and the parameters $K_0,\delta$ shall matter.
This time, we write
\begin{align*}
np_\ell & = d_0n\ell_n^{-1/\alpha}\pp{1+\frac{\ell-\wt\ell_n+\wb\Gamma_\ell-\wb\Gamma_{\ell_n}}{\ell_n}}^{-1/\alpha}\\
& =d_0n\ell_n^{-1/\alpha}\pp{1-\frac1\alpha\frac{\ell-\wt\ell_n}{\ell_n}-\frac1\alpha\frac{\wb\Gamma_\ell-\wb\Gamma_{\ell_n}}{\ell_n}+\omega_{n,\ell}}.
\end{align*}
Recall $d_0n\ell_n^{-1/\alpha}\sim j_n\sim \alpha\ell_n/\sigma_\alpha$. A direct calculation shows (see also \eqref{eq:4th})
\[
d_0n\ell_n^{-1/\alpha} = j_n + O(1).
\]
For the expression in the parenthesis, we first have for some constant $D>2$,
\equh\label{eq:gamma incre}
\max_{\ell\in I_{n,1,2}}|\wb\Gamma_\ell-\wb\Gamma_{\ell_n}|\le D\sqrt{(1+2\delta)\ell_n\log\log n},
\eque
for all $n$ large enough. Indeed, to prove \eqref{eq:gamma incre}, we first notice that 
since $\ell-\ell_n = \ell-\wt\ell_n - \wb\Gamma_{\ell_n}$, 
for all $n$ large enough, $\{\ell\in\N:|\ell-\wt\ell_n|\le \delta\ell_n\}\subset\{\ell\in\N:|\ell-\ell_n|\le 2\delta\ell_n\}$. That is
\[
\max_{\ell\in I_{n,1,2}}\abs{\wb\Gamma_\ell-\wb\Gamma_{\ell_n}}\le \max_{\ell\in\N:|\ell-\ell_n|\le 2\delta\ell_n}\abs{\wb\Gamma_\ell-\wb\Gamma_{\ell_n}},
\]
and the right-hand side above is eventually bounded by the right-hand side of \eqref{eq:gamma incre}. 
The order of this upper bound is the same as that of $k_n$, whose multiplicative constant $K_0$ has not yet been chosen.

Next for $\omega_{n,\ell}$, we compare with $(\ell-\wt\ell_n)/\ell_n$. Indeed, by Taylor's expansion we have  
\begin{align*}
|\omega_{n,\ell}|& \le C_\delta\frac{(\ell-\wt\ell_n+\wb\Gamma_\ell-\wb\Gamma_{\ell_n})^2}{\ell_n^2}\\
&\le C_\delta\frac{(\ell-\wt\ell_n)^2}{\ell_n^2}\pp{1+\frac{D\sqrt{1+2\delta}}{K_0}}^2\le 2C_\delta\frac{(\ell-\wt\ell_n)^2}{\ell_n^2},  \mfa \ell\in I_{n,1,2},
\end{align*}
for $n$ large enough, where the constant $C_\delta$ depends on $\delta$ and can be chosen to be bounded as $\delta\downarrow 0$ (from Taylor's expansion), but does not depend on $K_0$: the second inequality follows from the earlier estimate that
\[
\max_{\ell\in I_{n,1,2}}\frac{\abs{\wb\Gamma_\ell-\wb\Gamma_{\ell_n}}}{|\ell-\wt\ell_n|}\le \frac{D\sqrt{1+2\delta}}{K_0},
\]
and in the third inequality we have taken $K_0$ large enough. 
That is, for fixed $\delta>0$ by taking $K_0$ large enough we have that for $n$ large enough,
\[
\max_{\ell\in I_{n,1,2}}\frac{\ell_n|\omega_{n,\ell}|}{|\ell-\wt\ell_n|}\le 2C_\delta\max_{\ell\in I_{n,1,2}}\frac{|\ell-\wt\ell_n|}{\ell_n}\le 2C_\delta\delta.
\]

Therefore, for all $\epsilon>0$, fixing $D$, taking $\delta>0$ small enough, and then taking $K_0$ large enough we can ensure that
\[
\max_{\ell\in I_{n,1,2}}\abs{\frac{np_\ell - j_n}{\ell-\wt\ell_n} + \sigma_\alpha\inv}\le \frac\epsilon2\sigma_\alpha\inv.
\]
Note also that for $|t|\le T$, $(s_n(t)-n)p_\ell = O(1)$, and hence for $n$ large enough we have
\[
\max_{\ell\in I_{n,1,2}}\abs{\frac{s_n(t)p_\ell - j_n}{\ell-\wt\ell_n} + \sigma_\alpha\inv}< \epsilon\sigma_\alpha\inv, \mfa n \mbox{ large enough.}
\]
The above then implies
\eqref{eq:separation}, \eqref{eq:separation1} and \eqref{eq:separation2} over $I_{n,1,2}$. 

%\newpage

We shall also use the following fact.
For every fixed $0<c_0<C_0<\infty$, there exist constants $c_1,c_2>0$ such that 
\begin{equation}\label{eq:gamma-bound}
\frac{x^je^{-x}}{j!}
+\frac{x^{j-1}e^{-x}}{(j-1)!}
\leq\frac{c_1}{\sqrt j}
\exp\pp{-c_2\frac{(x-j)^2}{j}}, \mfa j\in\N, x\in [c_0j,C_0j].
\end{equation}
Indeed, Stirling's formula gives, for some $C>0$, 
$j!\geq C\sqrt j(j/e)^j$ for all $j\in\N$.
Therefore, writing $u=x/j\in[c_0,C_0]$, we obtain that for some $C>0$,
\[
\frac{x^je^{-x}}{j!}
\leq\frac{C}{\sqrt j}
\exp\pp{-j\pp{u-1-\log u}}, \mfa j\in\N.
\]
We then check that there exists a constant $c_2$ (depending on $c_0,C_0$) such that 
\[
u-1-\log u\geq c_2(u-1)^2,
\qquad u\in[c_0,C_0],
\]
and consequently
\[
\frac{x^je^{-x}}{j!}
\leq\frac{C}{\sqrt j}
\exp\pp{-c_2\frac{(x-j)^2}{j}}.
\]
Finally,
\[
\frac{x^{j-1}e^{-x}}{(j-1)!}
=\frac{j}{x}\frac{x^je^{-x}}{j!}
\leq\frac1{c_0}\frac{x^je^{-x}}{j!},
\]
which proves \eqref{eq:gamma-bound}.

Now, we are ready to estimate $\sum_{\ell\in I_{n,1}}\wt q_{n,\ell}^*(T)$. 
We first claim that there exist constants $C,c>0$ such that
\equh\label{eq:local-tail-bound}
\wt q^*_{n,\ell}(T)\leq \frac{C}{\sqrt{j_n}}\exp\pp{-c\frac{(\ell-\wt\ell_n)^2}{j_n}},\mfa \ell\in I_{n,1}.
\eque
Indeed, by definition
\begin{align}
\wt q^*_{n,\ell}(T)&=\proba_\calP\pp{\calN_\ell(s_n(-T))=j_n}+\proba_\calP\pp{s_n(-T)<\tau_{j_n}\topp\ell\leq s_n(T)}\nonumber
\\
&=\frac{\pp{s_n(-T)p_\ell}^{j_n}e^{-s_n(-T)p_\ell}}{j_n!}+\int_{s_n(-T)}^{s_n(T)}p_\ell\frac{(up_\ell)^{j_n-1}e^{-up_\ell}}{(j_n-1)!}\d u.\label{eq:q* I_n,1}
\end{align}
For the integral above, \eqref{eq:separation}, \eqref{eq:separation1}, and \eqref{eq:separation2} imply that for all $u\in[s_n(-T),s_n(T)]$ we have
\[
(1-\epsilon)j_n\leq up_\ell\leq(1+\epsilon)j_n,
\]
and
\[
\abs{up_\ell-j_n}\geq\frac{1-\epsilon}{\sigma_\alpha}\abs{\ell-\wt\ell_n},\mfa \ell\in I_{n,1}.
\]
Moreover,
\[
\max_{\ell\in I_{n,1}}p_\ell\pp{s_n(T)-s_n(-T)}=\max_{\ell\in I_{n,1}}2T\frac{np_\ell}{j_n}
=O(1).
\]
Thus, \eqref{eq:gamma-bound} yields that there exist $C>0, c>0$ such that both terms on the right-hand side of \eqref{eq:q* I_n,1} are bounded by constant multiples of the right-hand side of \eqref{eq:local-tail-bound}, and we have proved \eqref{eq:local-tail-bound}. By Riemann-sum approximation, we have
\equh\label{eq:I_n,1}
\limsup_{n\to\infty}\sum_{\ell\in I_{n,1}}\wt q^*_{n,\ell}(T)\leq C\int_K^\infty e^{-cx^2}\d x.
\eque

Next, consider $\ell\in I_{n,2}$.
Recall $\wt\ell_n/\ell_n\to 1$ almost surely.  Then, $|\ell-\wt\ell_n|\ge \delta \ell_n$ implies that
\equh\label{eq:two cases'}
\mbox{ either }\quad \ell\ge \ell_n^+:=\ceil{(1+\delta/2)\ell_n}\quad \mbox{ or } \quad\ell\le \ell_n^-:=\floor{(1-\delta/2)\ell_n}.
\eque 
Since $(p_\ell)_{\ell\geq1}$ is decreasing, if $\ell\ge \ell_n^+$ then 
$p_\ell\leq p_{\ell_n^+}$, and similarly in the other case we have
$p_\ell\geq p_{\ell_n^-}$. By the strong law,
$s_n(\pm T)p_{\ell_n^\pm}/j_n\to(1\pm\delta/2)^{-1/\alpha}$. 
Consequently, there exists $\epsilon>0$ (depending on $\delta$) such that, for all sufficiently large $n$, the two cases in \eqref{eq:two cases'} yield, respectively,
\begin{align*}
s_n(T)p_\ell\leq(1-\epsilon)j_n, & \mbox{ if } \ell\ge \ell_n^+,\\
s_n(-T)p_\ell\geq(1+\epsilon)j_n, & \mbox{ if } \ell\le \ell_n^-.
\end{align*}
Then, for $n$ large enough (depending on a $\calP$-measurable random variable),
\begin{align*}
\widetilde q^*_{n,\ell}(T)&=\mathbb P_{\calP}\left(
\tau_{j_n}^{(\ell)}\leq s_n(T),
 \tau_{j_n+1}^{(\ell)}>s_n(-T)\right)\nonumber
\\
&= \proba_\calP\pp{\calN_\ell(s_n(T))\ge j_n, \calN_\ell(s_n(-T))\le j_n}\nonumber\\
& \le \begin{cases}
\displaystyle\proba_\calP\pp{\calN_\ell(s_n(T))\ge \frac{s_n(T)p_\ell}{1-\epsilon}}, & \mbox{ if } \ell\ge \ell_n^+,\\\\
\displaystyle \proba_\calP\pp{\calN_\ell(s_n(-T))\le \frac{s_n(-T)p_\ell}{1+\epsilon}} & \mbox{ if } \ell\le \ell_n^-.
 \end{cases}%\label{eq:q I_n,2}
\end{align*}
By the Chernoff bounds of Poisson random variables we have  that there exists $c_\epsilon$ (depending on $\epsilon$ and hence $\delta$, but not $M$) such 
that
$\max_{\ell\in I_{n,2}}\widetilde q^*_{n,\ell}(T)\leq Ce^{-c_\epsilon j_n}$.
Hence, 
\equh\label{eq:I_n,2}
\sum_{\ell\in I_{n,2}}
\widetilde q^*_{n,\ell}(T)
\leq  CM\ell_n e^{-c_\epsilon j_n}\to 0
\eque
as $n\to\infty$.

For $\ell\in I_{n,3}$, fix $M>2$. Since
$\wt\ell_n/\ell_n\to1$ almost surely, for all sufficiently large $n$,
\[
I_{n,3}\subset\ccbb{\ell\in\N:\ell>M\ell_n}.
\]
 Using $p_\ell= d_0 \Gamma_\ell^{-1/\alpha}$ we have that 
\[
\frac{s_n(T)p_\ell}{j_n}\leq
C\left(\frac{\ell_n}{\ell}\right)^{1/\alpha}, 
\]
for all $\ell\in\N$, and the constant $C$ does not depend on $M$. By taking $M$ large enough so that $CM^{-1/\alpha}<1$ (note that here $C$ is a $\calP$-measurable random variable), it then follows that 
\equh\label{eq:I_n,3}
\sum_{\ell\geq M\ell_n}\widetilde q^*_{n,\ell}(T)
\le \sum_{\ell\geq M\ell_n}\pp{\frac{e s_n(T)p_\ell}{j_n}}^{j_n}
%\le \sum_{\ell\geq M\ell_n}\pp{C\pp{\frac{\ell_n}\ell}^{1/\alpha}}^{j_n}
\leq C\ell_nM\left(CM^{-1/\alpha}\right)^{j_n}\to 0.
\eque
Combining \eqref{eq:I_n,1}, \eqref{eq:I_n,2} and \eqref{eq:I_n,3}, we have proved \eqref{eq:poissonized-localization}.

It remains to transfer the estimate to the original model. Under the coupling,
$\calN(t):=\sum_{\ell\geq1}\calN_\ell(t)$
is a standard Poisson process and $T_j^{(\ell)}=\calN(\tau_j^{(\ell)})$.
Let
\[
\Omega_n:=\left\{
\sup_{0\leq u\leq2n}|\calN(u)-u|\leq\frac{n}{j_n}
\right\}.
\]
On $\Omega_n$, if $[T_{j_n}\topp\ell, T_{j_n+1}\topp\ell]\cap [s_n(-T),s_n(T)]\ne\emptyset$, then $[\tau_{j_n}\topp\ell,\tau_{j_n+1}\topp\ell]\cap[s_n(-T-2),s_n(T+2)]\ne\emptyset$. This essentially follows from $\calN(\tau_j\topp\ell) = T_j\topp\ell$. Set $E_{n,\ell}:=\{[T_{j_n}\topp\ell,T_{j_n+1}\topp \ell]\cap[s_n(-T),s_n(T)]\ne\emptyset\}$. Then,
\begin{align*}
\sum_{\substack{\ell\in\N\\|\ell-\wt\ell_n|>K\sqrt{j_n}}}q^*_{n,\ell}(T)
& \leq
\sum_{\substack{\ell\in\N\\|\ell-\wt\ell_n|>K\sqrt{j_n}}}\widetilde q^*_{n,\ell}(T+2)
+\sum_{\substack{\ell\in\N\\|\ell-\wt\ell_n|>K\sqrt{j_n}}} \proba_\calP\pp{E_{n,\ell}\cap \Omega_n^c}\\
& =
\sum_{\substack{\ell\in\N\\|\ell-\wt\ell_n|>K\sqrt{j_n}}}\widetilde q^*_{n,\ell}(T+2)
+\esp_\calP\pp{\sum_{\substack{\ell\in\N\\|\ell-\wt\ell_n|>K\sqrt{j_n}}}\ind_{E_{n,\ell}}\ind_{ \Omega_n^c}}\\
& \le \sum_{\substack{\ell\in\N\\|\ell-\wt\ell_n|>K\sqrt{j_n}}}\widetilde q^*_{n,\ell}(T+2)+\frac{s_n(T)}{j_n}\proba_\calP(\Omega_n^c).
\end{align*}
A Poisson maximal inequality gives
\[
\mathbb P_{\calP}(\Omega_n^c)
\leq C\exp\pp{-C\frac{n}{j_n^2}}.
\]
Since $n/j_n^2$ is of order $n^{(1-\alpha)/(1+\alpha)}$, the second term tends to zero, and the first assertion follows from \eqref{eq:poissonized-localization}.

Next, we prove  \eqref{eq:B}. 
Note that $T$ is fixed. Consider $K>2T$ from now on. Write $I_{n,\ell}:=[Y_{n,\ell},Y_{n,\ell}+Z_{n,\ell})$. Then, $I_{n,\ell}\cap [-T,T]\ne\emptyset$ and $Z_{n,\ell}>K$ imply that either $-T\in I_{n,\ell}$ or $T\in I_{n,\ell}$, or equivalently  $K_{\floor{s_n(t)},\ell} = j_n$ with $t=-T$ or $T$. Thus,
\begin{multline*}
\proba_\calP\pp{[Y_{n,\ell},Y_{n,\ell}+Z_{n,\ell})\cap [-T,T]\ne\emptyset, Z_{n,\ell}>K}\\
\le\sum_{t\in\{-T,T\}}\proba_\calP\pp{K_{\floor{s_n(t)},\ell}=j_n, T\topp\ell_{j_n+1}-T\topp\ell_{j_n}>\frac{Kn}{j_n}}.
\end{multline*}
Then, 
with $r_{n,K}:=\floor{Kn/(2j_n)}$ we have 
\begin{align*}
&\proba_\calP\pp{K_{\floor{s_n(t)},\ell}=j_n, T\topp\ell_{j_n+1}-T\topp\ell_{j_n}>\frac{Kn}{j_n}}\\
&\le \mathbb P_{\calP}\left(K_{\floor{s_n(t)},\ell}=j_n,\ T_{j_n+1}^{(\ell)}-\floor{s_n(t)}>r_{n,K}\right)+\mathbb P_{\calP}\left(K_{\floor{s_n(t)},\ell}=j_n,\floor{s_n(t)}-T_{j_n}^{(\ell)}>r_{n,K}\right)\\
& = \proba_\calP\pp{K_{\floor{s_n(t)},\ell} = j_n}(1-p_\ell)^{r_{n,K}}+\proba_\calP\pp{K_{\floor{s_n(t)}-r_{n,K}-1,\ell}=j_n}(1-p_\ell)^{r_{n,K}+1}.
\end{align*}

Introduce 
\[
J_{n,K}:=\ccbb{\ell\in\N:|\ell-\wt\ell_n|\le K\sqrt{j_n}}.
\]
Then, 
\[%\equh\label{eq:Z_n,k}
\mathcal Z_n\topp{T,K}(t)=
\sum_{\ell \in J_{n,K}}
\delta_{X_{n,\ell}}
\mathbf 1_{\{Z_{n,\ell}\leq K\}}
\mathbf 1_{\{Y_{n,\ell}\leq t<Y_{n,\ell}+Z_{n,\ell}\}}.
\]%\eque
Lemma \ref{lem:CRP case} yields, for each fixed $K$,
\[
\limn\max_{\ell\in J_{n,K}}\left|\frac{np_{\ell}}{j_n}-1\right|=0.
\]
Consequently, for all sufficiently large $n$,
\[
\max_{\ell\in J_{n,K}}(1-p_\ell)^{r_{n,K}}\leq e^{-K/4}.
\]
Set
\[
t_{n,K}\equiv t_{n,K}(t):=j_n\left(\frac{\floor{s_n(t)}-{\floor{Kn/(2j_n)}}-1}n-1\right).
\]
We therefore obtain, for all sufficiently large $n$,
\begin{multline}
\mathbb P_{\calP}(B_{n,K,T})
\leq e^{-K/4}\sum_{\ell\in J_{n,K}}\sum_{t\in\{-T,T\}}
\proba_\calP(K_{\floor{s_n(t)},\ell}=j_n)\\
+e^{-K/4}\sum_{\ell\in J_{n,K}}\sum_{t\in\{-T,T\}}
\proba_\calP(K_{\floor{s_n(t_{n,K})},\ell}=j_n).\label{eq:B2}
\end{multline}
Note that $\limn t_{n,K} = t-K/2$.

We also have
for every fixed $K,T>0$, 
\begin{equation}\label{eq:central-mass}
\limn\sup_{|t|\leq T}\left|\sum_{\ell\in J_{n,K}}
\mathbb P_{\calP}\left(K_{\floor{s_n(t)},\ell}=j_n\right)-\mu_\alpha([-K,K])\right|=0,
\end{equation}
almost surely.
To see this we have the following estimate. It is convenient to write $\ell = \ell_n+k$ from now on. Note that $\ell-\wt\ell_n = k+\wb\Gamma_{\ell_n}$. We have
\equh\label{eq:binomial pmf}
\proba_\calP\pp{K_{\floor{s_n(t)},\ell_n+k}=j_n}=\frac{1+o(1)}{\sqrt{2\pi j_n}}
\exp\pp{-\frac{(k+\wb\Gamma_{\ell_n})^2}{2\sigma_\alpha^2j_n}},
\eque
uniformly for $t,k$ such that $|t|\le T$, $|k+\wb\Gamma_{\ell_n}|\le K\sqrt{j_n}$. 
Now \eqref{eq:central-mass} is the corresponding Riemann-sum convergence.
Indeed, uniformly for $|t|\leq T$ and
$|k+\wb\Gamma_{\ell_n}|\leq K\sqrt{j_n}$, we have
\[
\floor{s_n(t)}\sim n,
\qquad
p_{\ell_n+k}\sim\frac{j_n}{n}.
\]
Set
\[
y_n(t,k):=\floor{s_n(t)}p_{\ell_n+k}.
\]
Notice that $y_n(t,k) =  np_{\ell_n+k}+t + o(1)$,
where the $o(1)$ term is uniform over $t$ and $k$ in the range of interest. 
Since $j_n^2/n\to0$, the binomial--Poisson comparison
\eqref{eq:binomial-Poisson-local} 
yields
\equh\label{eq:BP}
\proba_{\calP}\pp{K_{\floor{s_n(t)},\ell_n+k}=j_n}
=
\frac{y_n(t,k)^{j_n}e^{-y_n(t,k)}}{j_n!}(1+o(1))
\eque
uniformly over the stated range. 
Consequently, the same calculation leading to \eqref{eq:Poisson pmf}
yields 
\begin{align*}
\frac{y_n(t,k)^{j_n}e^{-y_n(t,k)}}{j_n!}
&=
\frac{1+o(1)}{\sqrt{2\pi j_n}}
\exp\pp{
-\frac{
\pp{k+\wb\Gamma_{\ell_n}-\sigma_\alpha t}^2
}{2\sigma_\alpha^2j_n}}
\\
&=
\frac{1+o(1)}{\sqrt{2\pi j_n}}
\exp\pp{
-\frac{
\pp{k+\wb\Gamma_{\ell_n}}^2
}{2\sigma_\alpha^2j_n}+o(1)}
\end{align*}
uniformly over the same range. This and \eqref{eq:BP} together prove \eqref{eq:binomial pmf}. 
\color{black}

It follows from \eqref{eq:central-mass} (over $|t|\le T+K/2+1$) and \eqref{eq:B2} that 
\[
\limsupn\proba_\calP(B_{n,K,T}) \leq4\mu_\alpha(\mathbb R)e^{-K/4},
\]
whence \eqref{eq:B} holds.
\end{proof}

\subsection{Proof of the general case}\label{sec:general}
In this section, we provide a sketched proof of Theorem \ref{thm:PP conv general}, where we assume $p_n = d_0n^{-1/\alpha}(1+\rho_n)$ with $\limn n^{1/2}\rho_n = 0$. The proof follows the same strategy as in previous sections. We first sketch the proof of \eqref{eq:PP convergence}.  The key difference in the calculation is that this time we have the following for the Poissonized model (compared with \eqref{eq:tau_jn asymp})
\[
\proba\pp{\tau\topp{\ell_n+k}_{j_n}\in(s_n(v),s_n(w)]}
\sim \frac1{\sqrt{2\pi j_n}}\int_v^w\exp\pp{-\frac{(k-\sigma_\alpha y)^2}{2j_n\sigma_\alpha^2}}\d y,
\]
and therefore when working with the summation over $k$ we work with $k = a_n,\dots,b_n$ with 
\[
a_n:=\floor{aj_n^{1/2}} \qmand b_n:=\floor{bj_n^{1/2}},
\]
instead of $k=\wt a_n,\dots,\wt b_n$ (there is no longer the drift term; compare \eqref{eq:diff CRP} and \eqref{eq:diff general}).
Thus, letting $\wt\xi_n'$ denote the point process for the Poissonized model corresponding to $\xi_n'$ in~\eqref{eq:xi'_n}, we have, for all $a<b, v<w, z>0$, 
\begin{align*}
 \esp \wt\xi'_n([a,b]\times[v,w]\times[0,z]) & \sim \sum_{k=a_n}^{b_n}\proba\pp{\tau\topp{\ell_n+k}_{j_n}\in(s_n(v),s_n(w)]}\proba  \pp{\tau\topp{\ell_n+k}_1\le \frac {nz}{j_n}}\\
&\sim \frac1{\sqrt{2\pi j_n}}\int_{a_n}^{b_n+1}\int_v^w\exp\pp{-\frac{(x-\sigma_\alpha y)^2}{2j_n\sigma_\alpha^2}}\d y\d x(1-e^{-z})\nonumber\\
& \to \frac1{\sqrt{2\pi}}\int_a^b\int_v^w e^{-x^2/(2\sigma_\alpha^2)}\d y\d x(1-e^{-z}),
\end{align*}
where in the second step we used the following in place of \eqref{eq:square}:
\begin{align*}
\frac{\spp{np_{\ell_n+k}-j_n}^2}{j_n}&=\frac1{j_n}\pp{\frac{j_nk}{\alpha \ell_n}+\wt O(1)+\wt O(j_n\rho_{\ell_n+k})}^2 = \frac{j_nk^2}{\alpha^2\ell_n^2}+\frac{2k}{\alpha\ell_n}\pp{\wt O(1)+\wt O(j_n\rho_{\ell_n+k})}\nonumber\\
& = \frac{j_nk^2}{\alpha^2\ell_n^2}+\wt O\pp{\frac{k}{\ell_n}} + \wt O(j_n^{1/2}\rho_{\ell_n+k}).\nonumber%\label{eq:d_0 2}
\end{align*}
In the last expression, recall that $\wt O$ is uniform over $|k|\le Kj_n^{1/2}$, and hence for the last $\wt O$ term to be negligible we need the condition $\limn n^{1/2}\rho_n = 0$. 
Moreover, we also have
\begin{align*}
 \proba &\pp{ \wt\xi'_n([a,b]\times[v,w]\times[0,z])=0}\\
&  \sim \prod_{k=a_n}^{b_n}\proba\pp{\tau\topp{\ell_n+k}_{j_n}\notin(s_n(v),s_n(w)], \mbox{ or } \tau\topp{\ell_n+k}_{j_n+1}-\tau\topp{\ell_n+k}_{j_n}> \frac {nz}{j_n}}\\
&\to \exp \pp{-\mu_\alpha([a,b])(w-v)(1-e^{-z})}.
\end{align*}
The above calculations complete the proof of point-process convergence for the Poissonized model. The de-Poissonization follows from the same sandwich argument as before. The proof for the second and third parts of Theorem \ref{thm:PP conv general} follows the proof of Theorem \ref{thm:discrete-cmt}.

\appendix
\section{A time-change lemma}
Here we establish a time-change lemma that generalizes a result from \citet[P.~151]{billingsley99convergence}. 
Assume $a<b$ and $\epsilon>0$. Let $(X_n)_{n\in\N}$ and $X$ be stochastic processes in $D[a-\epsilon,b+\epsilon]$ (we use the abbreviated notation $X_n \equiv (X_n(t))_{t\in[a-\epsilon,b+\epsilon]}$ and similarly for other processes). Let $(\Phi_n)_{n\in\N}$ and $\Phi$ be stochastic processes in $D[a,b]$. 
Set 
\[
D^\uparrow_\epsilon[a,b]:=\{\phi\in D[a,b]: \phi \mbox{ is nondecreasing and } a-\epsilon\le \phi(a)\le\phi(b)\le b+\epsilon\},
\]
with the topology induced by the Skorokhod topology on $D[a,b]$  (the Skorokhod topology is usually defined for the space $D[0,1]$ \citep{billingsley99convergence}; but $D[a,b]$ can be obtained by an affine transformation of $D[0,1]$). 
Assume
 \[
 (X_n,\Phi_n)\weakto (X,\Phi)
 \]
 in $D[a-\epsilon,b+\epsilon]\times D_\epsilon^\uparrow[a,b]$ as $n\to\infty$. Note that in \citet{billingsley99convergence} it is assumed that $a=0,b=1, \epsilon=0$, but we need $\epsilon>0$ for our application. The following lemma adapts \citet[Lemma on p.~151]{billingsley99convergence} with some minor modifications. We keep the proof here for convenience.

\begin{lemma}\label{lem:time change}
Under the notation and assumptions above, assume in addition that $X\in C[a-\epsilon,b+\epsilon]$ almost surely. Then,  we have
\[
X_n\circ \Phi_n\weakto X\circ \Phi \mbox{ in $D[a,b]$}.
\]
\end{lemma}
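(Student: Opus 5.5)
The plan is to follow Billingsley's argument for the lemma on p.~151, reducing it to the continuous mapping theorem. Define the composition map $\Psi:D[a-\epsilon,b+\epsilon]\times D^\uparrow_\epsilon[a,b]\to D[a,b]$ by $\Psi(x,\phi):=x\circ\phi$. This map is well defined: if $\phi$ is nondecreasing with values in $[a-\epsilon,b+\epsilon]$ and $x$ is c\`adl\`ag, then $x\circ\phi$ is c\`adl\`ag. Right-continuity of $x\circ\phi$ follows from right-continuity of $\phi$ together with monotonicity, since $\phi(s)\downarrow\phi(t)$ as $s\downarrow t$. Left limits exist because $\phi(s)\uparrow\phi(t-)$, so $x(\phi(s))$ converges either to $x(\phi(t-)-)$ or to $x(\phi(t-))$.

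With $\Psi$ in hand, it suffices to show that $\Psi$ is continuous at every pair $(x,\phi)$ with $x\in C[a-\epsilon,b+\epsilon]$ and $\phi\in D^\uparrow_\epsilon[a,b]$. Since $X$ has continuous paths almost surely, the limit $(X,\Phi)$ then lies in the continuity set of $\Psi$ with probability one. The continuous mapping theorem applied to $(X_n,\Phi_n)\weakto(X,\Phi)$ then gives $X_n\circ\Phi_n\weakto X\circ\Phi$. One also has to check that $D^\uparrow_\epsilon[a,b]$ is a closed, hence measurable, subset on which the weak convergence is assumed; this is immediate.

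The key step is the continuity estimate. Suppose $x_n\to x$ in $D[a-\epsilon,b+\epsilon]$ with $x$ continuous, and $\phi_n\to\phi$ in the Skorokhod topology on $D[a,b]$. Because the limit $x$ is continuous, Skorokhod convergence upgrades to uniform convergence, so $\|x_n-x\|_\infty\to0$. Pick time changes $\lambda_n$ of $[a,b]$ with $\|\lambda_n-\mathrm{id}\|_\infty\to0$ and $\|\phi_n\circ\lambda_n-\phi\|_\infty\to0$. Then
\[
\nn{x_n\circ\phi_n\circ\lambda_n-x\circ\phi}_\infty\le\nn{x_n-x}_\infty+\sup_t\abs{x(\phi_n(\lambda_n t))-x(\phi(t))}.
\]
The last term tends to zero by uniform continuity of $x$ on the compact interval $[a-\epsilon,b+\epsilon]$. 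This is exactly where the hypothesis $\phi_n(t)\in[a-\epsilon,b+\epsilon]$ is used: all arguments of $x_n$ and $x$ stay inside the domain on which uniform convergence and uniform continuity hold. Hence $x_n\circ\phi_n\to x\circ\phi$ in $J_1$.

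I expect the main obstacle to be bookkeeping rather than substance. One point is that the Skorokhod topology on $D[a,b]$ and on the enlarged interval must be handled through the affine identification with $D[0,1]$. The other is verifying measurability of $\Psi$, or sidestepping it by the standard remark that the continuous mapping theorem only requires the discontinuity set to be contained in a null set of the limit law. Neither step should require new ideas beyond Billingsley's proof.
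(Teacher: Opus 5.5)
Your proposal is correct and follows essentially the same route as the paper. Both arguments define the composition map and prove it is continuous at pairs $(x,\phi)$ with $x$ continuous, using the triangle inequality to reduce to $\|x_n-x\|_\infty\to0$ plus uniform continuity of $x$ on $[a-\epsilon,b+\epsilon]$, and then conclude by the continuous mapping theorem. Your time-change convention $\|\phi_n\circ\lambda_n-\phi\|_\infty\to0$ is simply the inverse of the paper's $\|\phi_n-\phi\circ\lambda_n\|_\infty\to0$.
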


\begin{proof}
Set 
$\psi:D[a-\epsilon,b+\epsilon]\times D_\epsilon^\uparrow[a,b]\to D[a,b]$ by 
\equh\label{eq:psi}
\psi(x,\phi) = x\circ\phi.
\eque
We recall some notation. Let $\Lambda_{[a,b]}$ be the class of strictly increasing functions $\lambda:[a,b]\to[a,b]$ such that $\lambda(a) = a,\lambda(b) = b$, and $\nn f_{[a,b],\infty}:=\sup_{t\in[a,b]}|f(t)|$. Let `${\rm id}$' denote the function $g(t) = t$.  Then, $\phi_n\to \phi$ in $D_\epsilon^\uparrow[a,b]$ if there exists $(\lambda_n)_{n\in\N}\in \Lambda_{[a,b]}$ such that 
\equh\label{eq:lambda_n A}
\limn \nn{\lambda_n-{\rm id}}_{[a,b],\infty}=0 \qmand \limn\nn{\phi_n-\phi\circ\lambda_n}_{[a,b],\infty}= 0.
\eque 
We apply the continuous mapping theorem which consists of showing that $\psi$ in \eqref{eq:psi} is continuous at $(x,\phi)$ for all $x\in C[a-\epsilon,b+\epsilon]$ and $\phi\in D_\epsilon^\uparrow[a,b]$.
In fact, we show that for $(x_n)_{n\in\N}\subset D[a-\epsilon,b+\epsilon], (\phi_n)_{n\in\N}\subset D_\epsilon^\uparrow[a,b]$  such that $x_n\to x, \phi_n\to \phi$, we have
\[
\limn\nn{x_n\circ\phi_n - x\circ\phi\circ\lambda_n}_{[a,b],\infty}= 0,
\]
where $(\lambda_n)_{n\in\N}$ are as picked above in \eqref{eq:lambda_n A}.
To see that the above holds, it suffices to notice that  for every $t\in[a,b]$, we have
\begin{align*}
\abs{x_n\circ\phi_n(t) - x\circ\phi\circ\lambda_n(t)} &\le \abs{x_n\circ\phi_n(t) - x\circ\phi_n(t)}  + \abs{x\circ\phi_n(t) - x\circ\phi\circ\lambda_n(t)}\\
&\le\nn{x_n-x}_{[a-\epsilon,b+\epsilon],\infty} + \sup_{\substack{s,t\in [a-\epsilon,b+\epsilon]\\ |s-t|\le \nn{\phi_n-\phi\circ\lambda_n}_{[a,b],\infty}}}|x(s)-x(t)|,
\end{align*}
which goes to zero as $n\to\infty$ by assumption (for the second term we used that $x\in C[a-\epsilon,b+\epsilon]$).
\end{proof}
%\newpage
\section{Proof of Theorem \ref{thm:j fixed}}\label{sec:decomp}
Theorem \ref{thm:j fixed} is needed only for comparison and does not concern our main results on the phase transition as $j_n\to\infty$. We therefore only sketch the proof. 
The convergence of the process in the first coordinate (jointly in $j$) has been well known \citep{karlin67central,chebunin16functional,barbour09small,gnedin07notes}. The joint convergence can be established by the methodology in \citet{wang26central}. We only sketch the proof of the convergence of the process in the second coordinate in \eqref{eq:j decomp}. Notice that
\begin{align*}
(nt)^\alpha p_{j}\topp\alpha S_\alpha & =  (nt)^\alpha \frac\alpha{\Gamma(1-\alpha)}\frac{\Gamma(j-\alpha)}{\Gamma(j+1)}S_\alpha= \frac D{\Gamma(j+1)}(nt)^\alpha \pp{j\Gamma(j-\alpha)-\Gamma(j+1-\alpha)}\\
&  = \frac D{\Gamma(j+1)}\int_0^\infty e^{-z}(j-z)z^{j-1} \pp{\frac{nt}z}^{\alpha}\d z .
\end{align*}
Then, working with the Poissonized model, we have for all $t>0$
(recalling also the integral representation of $\esp(\wt C_j(nt)\mid\calP)$ in \eqref{eq:E wt C})
\begin{align*}
\frac{\esp\spp{\wt C_{j}(nt)\mid\calP} - (nt)^\alpha p_j\topp\alpha S_\alpha}{n^{\alpha/2}} & = \frac1{\Gamma(j+1)}\int_0^\infty e^{-z}(j-z)z^{j-1}\frac{N(D(nt/z)^\alpha)-D(nt/z)^\alpha}{n^{\alpha/2}}\d z\\
& \weakto \frac{D^{1/2}}{\Gamma(j+1)}\int_0^\infty e^{-z}(j-z)z^{j-1}\B_{(t/z)^\alpha}\d z\\
& \eqd D^{1/2}\int_0^\infty \proba(N(ty^{-1/\alpha}) = j)\d\B_y,
\end{align*}
as $n\to\infty$ in $D[0,1]$. 
Here, in the $\weakto$ step we used \eqref{eq:Vishakha} and then a standard truncation argument by first working with $\int_\delta^{1/\delta}$ and then letting $\delta\downarrow0$. This part is the most involved and requires some work similar to that in \citep{wang26central}. 
The first equality follows from a change of variables and then  integration by parts for It\^o integrals. One readily checks that the limit process above (indexed by $t$) has the same law as $D^{1/2}(\zeta_{\alpha,j}\topp2(t))_{t\ge 0}$. 
This completes the proof for the Poissonized model. The de-Poissonization step again relies on the binomial--Poisson approximation \eqref{eq:binomial-Poisson-local}. In this way we proved the convergence of the process in the second coordinate.

The second-coordinate processes are \(\calP\)-measurable. Hence the
almost sure weak convergence of the first coordinates, together with
the joint convergence of the second coordinates and \(D\), implies
the joint weak convergence of the two coordinates by conditioning on
\(\calP\). Their joint law is precisely the one represented by \(M_\alpha\)
in the statement.

Finally, the result for every \(\theta>-\alpha\) follows from the
change of measure in part (ii) of Lemma \ref{lem:P_j}. All the convergences
above hold jointly with \(D\), while the Radon--Nikodym derivative is
a function of \(S_\alpha\), and hence of \(D\), only. Truncating this
density transfers the convergence from \(\theta=0\) to general
\(\theta\). Since the argument applies to every finite collection of
indices \(j\), coordinatewise tightness completes the proof in
\((D[0,1]\times D[0,1])^\N\).

%	\newpage
	\bibliographystyle{apalike}
%	\bibliography{../../include/references,../../include/references18}
\bibliography{references,references18}	
\end{document}